\documentclass[12pt,pdftex,a4paper]{amsart}

\selectfont

\usepackage{wrapfig}

\usepackage{qtree}
\usepackage{forest}

\usepackage{caption}

\usepackage{etoolbox}
\usepackage{epic,eepic,ecltree}
\usepackage{graphicx}
\usepackage{tikz}
\usetikzlibrary{decorations.text}
\usepackage{amssymb,color, euscript, enumerate}
\usepackage{amsthm}
\usepackage{amsmath}
\usepackage{braket}
\usepackage{amscd}
\usepackage{txfonts}
\usepackage{comment}
\usepackage{bm}
\usepackage{amscd}
\numberwithin{equation}{section}

\makeatletter
\let\old@tocline\@tocline
\let\section@tocline\@tocline
\newcommand{\subsection@dotsep}{4.5}
\newcommand{\subsubsection@dotsep}{4.5}
\patchcmd{\@tocline}
  {\hfil}
  {\nobreak
     \leaders\hbox{$\m@th
        \mkern \subsection@dotsep mu\hbox{.}\mkern \subsection@dotsep mu$}\hfill
     \nobreak}{}{}
\let\subsection@tocline\@tocline
\let\@tocline\old@tocline

\patchcmd{\@tocline}
  {\hfil}
  {\nobreak
     \leaders\hbox{$\m@th
        \mkern \subsubsection@dotsep mu\hbox{.}\mkern \subsubsection@dotsep mu$}\hfill
     \nobreak}{}{}
\let\subsubsection@tocline\@tocline
\let\@tocline\old@tocline

\let\old@l@subsection\l@subsection
\let\old@l@subsubsection\l@subsubsection

\def\@tocwriteb#1#2#3{%
  \begingroup
    \@xp\def\csname #2@tocline\endcsname##1##2##3##4##5##6{%
      \ifnum##1>\c@tocdepth
      \else \sbox\z@{##5\let\indentlabel\@tochangmeasure##6}\fi}%
    \csname l@#2\endcsname{#1{\csname#2name\endcsname}{\@secnumber}{}}%
  \endgroup
  \addcontentsline{toc}{#2}%
    {\protect#1{\csname#2name\endcsname}{\@secnumber}{#3}}}%

\newlength{\@tocsectionindent}
\newlength{\@tocsubsectionindent}
\newlength{\@tocsubsubsectionindent}
\newlength{\@tocsectionnumwidth}
\newlength{\@tocsubsectionnumwidth}
\newlength{\@tocsubsubsectionnumwidth}
\newcommand{\settocsectionnumwidth}[1]{\setlength{\@tocsectionnumwidth}{#1}}
\newcommand{\settocsubsectionnumwidth}[1]{\setlength{\@tocsubsectionnumwidth}{#1}}
\newcommand{\settocsubsubsectionnumwidth}[1]{\setlength{\@tocsubsubsectionnumwidth}{#1}}
\newcommand{\settocsectionindent}[1]{\setlength{\@tocsectionindent}{#1}}
\newcommand{\settocsubsectionindent}[1]{\setlength{\@tocsubsectionindent}{#1}}
\newcommand{\settocsubsubsectionindent}[1]{\setlength{\@tocsubsubsectionindent}{#1}}

\renewcommand{\l@section}{\section@tocline{1}{\@tocsectionvskip}{\@tocsectionindent}{}{\@tocsectionformat}}%
\renewcommand{\l@subsection}{\subsection@tocline{2}{\@tocsubsectionvskip}{\@tocsubsectionindent}{}{\@tocsubsectionformat}}%
\renewcommand{\l@subsubsection}{\subsubsection@tocline{3}{\@tocsubsubsectionvskip}{\@tocsubsubsectionindent}{}{\@tocsubsubsectionformat}}%
\newcommand{\@tocsectionformat}{}
\newcommand{\@tocsubsectionformat}{}
\newcommand{\@tocsubsubsectionformat}{}
\expandafter\def\csname toc@1format\endcsname{\@tocsectionformat}
\expandafter\def\csname toc@2format\endcsname{\@tocsubsectionformat}
\expandafter\def\csname toc@3format\endcsname{\@tocsubsubsectionformat}
\newcommand{\settocsectionformat}[1]{\renewcommand{\@tocsectionformat}{#1}}
\newcommand{\settocsubsectionformat}[1]{\renewcommand{\@tocsubsectionformat}{#1}}
\newcommand{\settocsubsubsectionformat}[1]{\renewcommand{\@tocsubsubsectionformat}{#1}}
\newlength{\@tocsectionvskip}
\newcommand{\settocsectionvskip}[1]{\setlength{\@tocsectionvskip}{#1}}
\newlength{\@tocsubsectionvskip}
\newcommand{\settocsubsectionvskip}[1]{\setlength{\@tocsubsectionvskip}{#1}}
\newlength{\@tocsubsubsectionvskip}
\newcommand{\settocsubsubsectionvskip}[1]{\setlength{\@tocsubsubsectionvskip}{#1}}

\patchcmd{\tocsection}{\indentlabel}{\makebox[\@tocsectionnumwidth][l]}{}{}
\patchcmd{\tocsubsection}{\indentlabel}{\makebox[\@tocsubsectionnumwidth][l]}{}{}
\patchcmd{\tocsubsubsection}{\indentlabel}{\makebox[\@tocsubsubsectionnumwidth][l]}{}{}

\newcommand{\@sectypepnumformat}{}
\renewcommand{\contentsline}[1]{%
  \expandafter\let\expandafter\@sectypepnumformat\csname @toc#1pnumformat\endcsname%
  \csname l@#1\endcsname}
\newcommand{\@tocsectionpnumformat}{}
\newcommand{\@tocsubsectionpnumformat}{}
\newcommand{\@tocsubsubsectionpnumformat}{}
\newcommand{\setsectionpnumformat}[1]{\renewcommand{\@tocsectionpnumformat}{#1}}
\newcommand{\setsubsectionpnumformat}[1]{\renewcommand{\@tocsubsectionpnumformat}{#1}}
\newcommand{\setsubsubsectionpnumformat}[1]{\renewcommand{\@tocsubsubsectionpnumformat}{#1}}
\renewcommand{\@tocpagenum}[1]{%
  \hfill {\mdseries\@sectypepnumformat #1}}

\let\oldappendix\appendix
\renewcommand{\appendix}{%
  \leavevmode\oldappendix%
  \addtocontents{toc}{%
    \protect\settowidth{\protect\@tocsectionnumwidth}{\protect\@tocsectionformat\sectionname\space}%
    \protect\addtolength{\protect\@tocsectionnumwidth}{2em}}%
}
\makeatother

\makeatletter
\settocsectionnumwidth{2em}
\settocsubsectionnumwidth{2.5em}
\settocsubsubsectionnumwidth{3em}
\settocsectionindent{1pc}%
\settocsubsectionindent{\dimexpr\@tocsectionindent+\@tocsectionnumwidth}%
\settocsubsubsectionindent{\dimexpr\@tocsubsectionindent+\@tocsubsectionnumwidth}%
\makeatother

\settocsectionvskip{10pt}
\settocsubsectionvskip{0pt}
\settocsubsubsectionvskip{0pt}
    
\settocsectionformat{\bfseries}
\settocsubsectionformat{\mdseries}
\settocsubsubsectionformat{\mdseries}
\setsectionpnumformat{\bfseries}
\setsubsectionpnumformat{\mdseries}
\setsubsubsectionpnumformat{\mdseries}

\let\oldtableofcontents\tableofcontents
\renewcommand{\tableofcontents}{%
  \vspace*{-\linespacing}
  \oldtableofcontents}

\usetikzlibrary{cd, decorations.pathmorphing}

\newcommand{\comp}{{\mathrm{comp}}}

\newcommand{\Hom}{{\mathrm{Hom}}}

\newcommand{\an}{{\mathrm{an}}}

\newcommand{\cC}{{\mathcal C}}

\newcommand{\TY}{\mathrm{TY}}

\newcommand{\Irr}{{\text{Irr}(V)}}

\newcommand{\Vmodu}{{\underline{V{\text{-mod}}}}}

\newcommand{\Vmodf}{{\underline{V\text{-mod}}_{f.g.}}}

\newcommand{\Lmod}{{{L(\ft,0)\text{-mod}}}}

\newcommand{\Vmodfr}{{\underline{V\text{-mod}}_{f.g.}^r}}

\newcommand{\CPaB}{{\underline{\text{PaB}}}}
\newcommand{\PaB}{{\underline{\text{PaB}}}}

\newcommand{\Z}{\mathbb{Z}}
\newcommand{\R}{\mathbb{R}}
\newcommand{\C}{\mathbb{C}}

\newcommand{\Y}{\mathcal{Y}}

\newcommand{\Xr}{{X_r(\C)}}

\newcommand{\Or}{\mathrm{O}_{\Xr}}

\newcommand{\Mr}{{M_{[0;r]}}}

\newcommand{\mr}{{m_{[0;r]}}}

\newcommand{\Dr}{{\mathrm{D}_{\Xr}}}

\newcommand{\Leaf}{\text{Leaf}}

\newcommand{\alg}{\text{alg}}

\newcommand{\conv}{\text{conv}}

\newcommand{\va}{\bm{1}}
\newcommand{\vac}{\bm{1}}
\newcommand{\1}{\bm{1}}

\newcommand{\id}{{\mathrm{id}}}

\newcommand{\z}{{\bar{z}}}

\newcommand{\pa}{{\partial}}

\newcommand{\al}{\alpha}

\newcommand{\ga}{\gamma}

\newcommand{\ze}{\zeta}

\newcommand{\D}{\bar{D}}
\newcommand{\om}{\omega}
\newcommand{\la}{\lambda}

\newcommand{\si}{\sigma}

\newcommand{\Log}{\mathrm{Log}}
\newcommand{\Arg}{\mathrm{Arg}}

\newcommand{\Is}{{\mathrm{IS}}}

\newcommand{\ft}{\frac{1}{2}}

\newcommand{\fs}{\frac{1}{16}}

\newcommand{\CB}{{\mathcal{CB}}}

\newcommand{\End}{\mathrm{End}}

\newcommand{\cat}{{\underline{\mathrm{Cat}}_\mathbb{C}}}

\newcommand{\Tr}{{\mathcal{T}}}

\newcommand{\cut}{\mathrm{cut}}

\newcommand{\op}{{\mathrm{op}}}

\makeatletter
\NewDocumentCommand{\extp}{e{^}}{%
  \mathop{\mathpalette\extp@{#1}}\nolimits
}
\NewDocumentCommand{\extp@}{mm}{%
  \bigwedge\nolimits\IfValueT{#2}{^{\extp@@{#1}#2}}%
  \IfValueT{#1}{\kern-2\scriptspace\nonscript\kern2\scriptspace}%
}
\newcommand{\extp@@}[1]{%
  \mkern
    \ifx#1\displaystyle-1.8\else
    \ifx#1\textstyle-1\else
    \ifx#1\scriptstyle-1\else
    -0.5\fi\fi\fi
  \thinmuskip
}
\makeatletter

\newtheorem{thm}{Theorem}[section]
\newtheorem{dfn}[thm]{Definition}
\newtheorem{lem}[thm]{Lemma}
\newtheorem{prop}[thm]{Proposition}
\newtheorem{cor}[thm]{Corollary}
\newtheorem{rem}[thm]{Remark}

\newtheorem{mainthm}{Main Theorem}

\begin{document}

\begin{center}
{{\LARGE \bf 
Conformal blocks, parenthesized braid operad, and $c=1/2$ Virasoro vertex operator algebra}
} \par \bigskip

\renewcommand*{\thefootnote}{\fnsymbol{footnote}}
{\normalsize
Yuto Moriwaki \footnote{email: \texttt{moriwaki.yuto (at) gmail.com}}
}
\par \bigskip
{\footnotesize 
RIKEN Center for Interdisciplinary Theoretical and
Mathematical Sciences (iTHEMS), RIKEN,\\ Wako 351-0198, Japan}

\par \bigskip
\end{center}

\noindent

\vspace{5mm}

\begin{center}
\textbf{\large Abstract}
\end{center}

We review the construction of a pseudo-braided category structure on the $C_1$-cofinite module category of a vertex operator algebra using conformal blocks and analytic continuation along paths in configuration spaces. In the rational $C_2$-cofinite case, the pseudo-braided category is represented by tensor products and becomes a balanced braided tensor category.
We then compute all four-point conformal blocks of the Virasoro vertex operator algebra of central charge $1/2$ in terms of hypergeometric functions. We explain how analytic continuation of these blocks determines the braiding and associator, and identify the resulting module category with the Tambara--Yamagami category over $\mathbb{Z}_2$ as a balanced braided tensor category.

\section{Introduction}

The purpose of this note is twofold. First, we recall the construction in
\cite{M1} which equips the $C_1$-cofinite module category of a vertex operator algebra with a
unital pseudo-braided category structure by using conformal blocks and analytic continuation along paths in configuration spaces.
Second, following the method of Belavin--Polyakov--Zamolodchikov
\cite{BPZ}, we explicitly compute all four-point conformal blocks for the
Virasoro vertex operator algebra \(L(\frac12,0)\) in terms of hypergeometric
functions. We then explain how the balanced braided tensor category structure
is recovered from analytic continuation of these conformal blocks.

A \emph{vertex operator algebra} is an algebra with infinitely many products
depending on a formal parameter \cite{Bo,FLM,LL}:
\[
  \cdot_z:V\otimes V\longrightarrow V((z)),\qquad
  a\cdot_z b
  =
  Y(a,z)b
  =
  \sum_{n\in\mathbb Z}a(n)b\,z^{-n-1}.
\]
Iterating these products gives parenthesized expressions, for example
\[
  a_1\cdot_{z_1-z_r}
  \bigl(a_2\cdot_{z_2-z_r}(
  \cdots(a_{r-1}\cdot_{z_{r-1}-z_r}a_r)\cdots)\bigr)\quad\text{ or }\quad
  (a_2\cdot_{z_2-z_1}a_1)\cdot_{z_1-z_4}
  (a_3\cdot_{z_3-z_4}a_4).
\]
These expressions are not only formal. They are power series which converge
in certain regions of the configuration space
\[
  X_r(\mathbb C)
  =
  \{(z_1,\ldots,z_r)\in\mathbb C^r
  \mid z_i\neq z_j \text{ for } i\neq j\}.
\]
For instance, the product \((a_1\cdot_{z_1-z_2} a_2)\cdot_{z_2-z_3} a_3\) corresponds to the
region
\[
  U_{(12)3}
  =
  \{(z_1,z_2,z_3)\in X_3(\mathbb C)
  \mid |z_1-z_2|<|z_2-z_3|\}.
\]

Let \(\Tr_r\) be the set of binary trees whose leaves are labeled by
\(\{1,\ldots,r\}\). Each \(A\in\Tr_r\) determines an order and a
parenthesization of an \(r\)-fold product. In \cite{M1}, we associated with
each \(A\in\Tr_r\) a simply connected domain
\[
  U_A\subset X_r(\mathbb C)
\]
on which the corresponding iterated product is convergent. These domains are
defined by local coordinates attached to the vertices and internal edges of
the tree. 

Let \(M_{[0;r]}=(M_0,M_1,\ldots,M_r)\) be a sequence of \(V\)-modules. The
\emph{conformal block} associated with \(M_{[0;r]}\) is the sheaf of holomorphic
solutions of a certain \(D\)-module on \(X_r(\mathbb C)\) \cite{FB,NT,M1}:
\[
  \CB_{M_{[0;r]}}(U)
  =
  \Hom_{D_{X_r(\mathbb C)}}
  \bigl(D_{M_{[0;r]}},\mathcal O_{X_r(\mathbb C)}(U)\bigr).
\]
Here \(M_0\) is contravariant and may be thought of as the module inserted at
infinity. Under the finiteness assumptions used in this note, this sheaf is
locally constant; see \cite{H2} and \cite[Section 4]{M1}. Hence paths in \(X_r(\mathbb C)\) act on conformal blocks
by analytic continuation.

\begin{wrapfigure}{r}{0.4\textwidth}
  \centering
  \vspace{-2mm}
    \includegraphics[width=\linewidth]{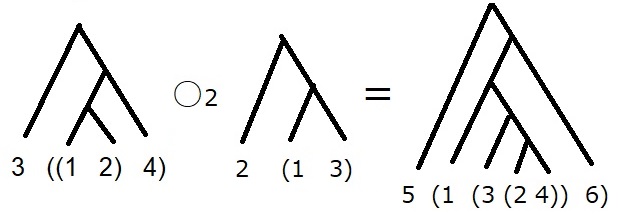}
\caption{composition of trees}
  \label{fig_tree}
\end{wrapfigure}
%
The domains \(U_A\) are chosen so that conformal blocks can be glued in a way
compatible with tree insertion.
If \(A\in\Tr_r\), \(B\in\Tr_s\), and
\(1\leq p\leq r\), then the insertion of \(B\) into the \(p\)-th leaf of \(A\)
gives a tree
\begin{align*}
A\circ_p B\in\Tr_{r+s-1},\qquad\qquad\qquad\qquad
\end{align*}
and there is a corresponding composition map
\[
  \mathrm{glue}_p:
  \CB_{[0;r]}(U_A)\otimes \CB_{[p;s]}(U_B)
  \longrightarrow
  \CB_{[0;r+s-1]}(U_{A\circ_p B}).
\]

\begin{wrapfigure}{r}{0.4\textwidth}
  \centering
  \vspace{-5mm}
    \includegraphics[scale=0.13]{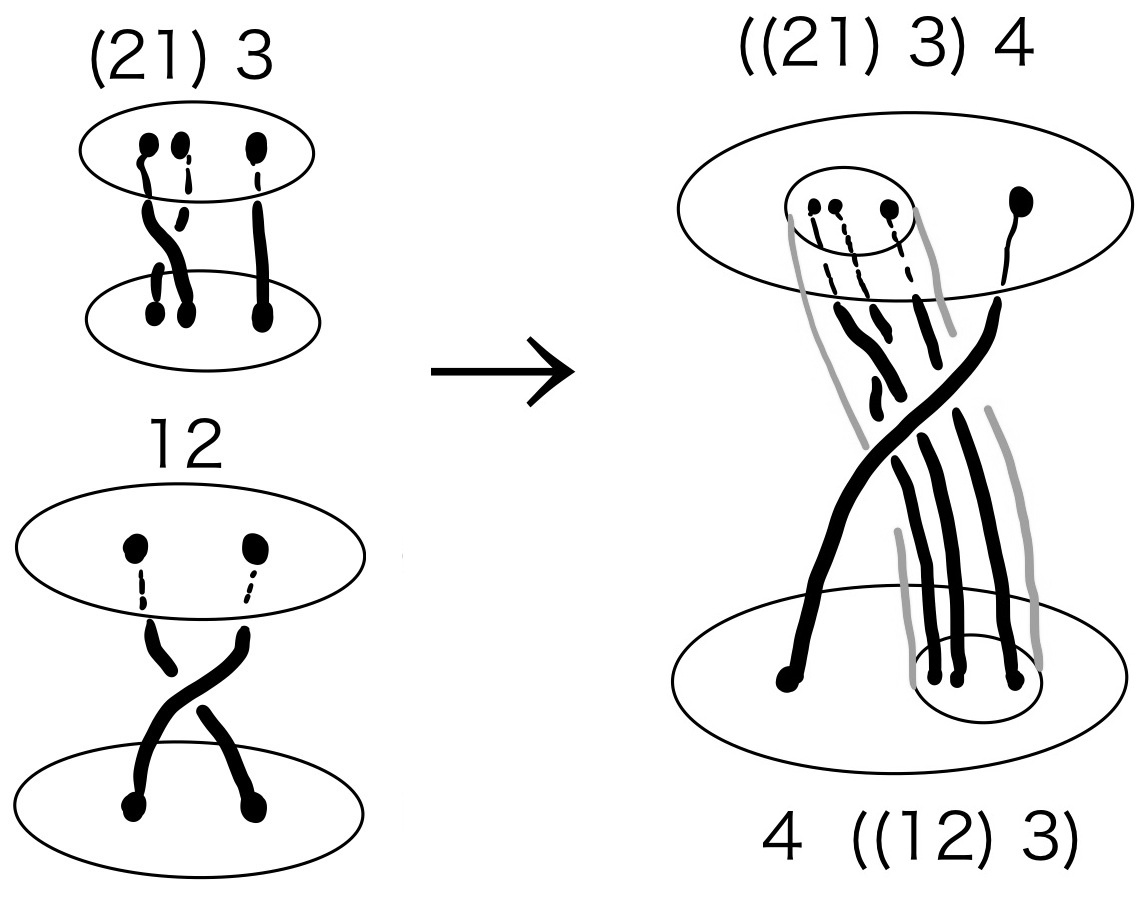}
\caption{composition of paths}
  \label{fig_thin}
\end{wrapfigure}
The \emph{parenthesized braid operad} \(\PaB\) keeps track of both the insertion of
trees and analytic continuation between the corresponding regions.
 Its objects
in arity \(r\) are the trees in \(\Tr_r\), and its morphisms are braids between
the associated parenthesized words. Geometrically, a morphism in \(\PaB(r)\)
is represented by a path in the configuration space \(X_r(\mathbb C)\), and it
acts on conformal blocks by analytic continuation. With these operations, the
spaces
\[
  \Hom_A(M_1,\ldots,M_r;M_0)
  =
  \CB_{M_{[0;r]}}(U_A)
\]
form the multi-hom spaces of a \emph{unital pseudo-braided category} \cite{So,M1}.

Let \(\Vmodf\) denote the category of \(C_1\)-cofinite
\(V\)-modules whose contragredient modules are finitely generated (see
Section~4 for the precise definition).
\begin{mainthm}\cite[Theorem~B.2]{M1}\label{thm_A}
If \(V^\vee\) is a finitely generated \(V\)-module, then
\(V\)-mod\(_{f.g.}\) inherits a unital pseudo-braided category structure.
\end{mainthm}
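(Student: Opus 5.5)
The plan is to build the pseudo-braided category directly out of conformal blocks. For modules $M_0,M_1,\dots,M_r$ in $\Vmodf$ and a tree $A\in\Tr_r$, the multi-hom spaces are $\Hom_A(M_1,\ldots,M_r;M_0)=\CB_{M_{[0;r]}}(U_A)$. The $\PaB$-action is analytic continuation along paths between the simply connected domains $U_A$, and composition is the gluing map $\mathrm{glue}_p$. The argument then has three stages.

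\textbf{Finiteness and local constancy.} First I would show that $D_{M_{[0;r]}}$ is a holonomic $D$-module with regular singularities, so that $\CB_{M_{[0;r]}}$ is a finite-rank local system on $X_r(\C)$.
\begin{itemize}
\item Finite generation of the $M_i$ together with $C_1$-cofiniteness should make $D_{M_{[0;r]}}$ coherent.
\item A Huang-type argument \cite{H2} should then give holonomicity.
\end{itemize}
The hypothesis that $V^\vee$ is finitely generated is needed so that the vacuum module itself lies in $\Vmodf$. This gives the unit objects and the unit constraints: inserting $V$ at a leaf corresponds to forgetting a point. Once local constancy holds, each homotopy class of paths in $X_r(\C)$ from $U_A$ to $U_B$ gives an isomorphism $\CB(U_A)\to\CB(U_B)$, because the $U_A$ are simply connected. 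Since $\PaB(r)$ is the fundamental groupoid of $X_r(\C)$ restricted to the contractible pieces $U_A$, this defines a functor $\PaB(r)\to$ vector spaces. Equivariance under the symmetric group is automatic, since permuting labels permutes both the points and the modules.

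\textbf{Gluing.} Next I would construct $\mathrm{glue}_p$. Given $f\in\CB_{[0;r]}(U_A)$ and $g\in\CB_{[p;s]}(U_B)$, I would define $\mathrm{glue}_p(f,g)$ by inserting the formal expansion of $g$ into the $p$-th slot of $f$. Concretely, this means summing over a basis of $M_p$ and its dual weight by weight, with the internal-edge coordinate of $A\circ_p B$ serving as the expansion parameter. The key steps are:
\begin{itemize}
\item Show that this series converges absolutely on $U_{A\circ_p B}$. This uses the local coordinates defining $U_{A\circ_p B}$, in which the inserted cluster is small compared with the distances in $A$.
\item Show that the result satisfies the defining equations of $D_{M_{[0;r+s-1]}}$.
\end{itemize}

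\textbf{Axioms.} Finally I would check the pseudo-braided category axioms:
\begin{itemize}
\item associativity of gluing;
\item unitality;
\item compatibility of gluing with the $\PaB$ operad composition, meaning analytic continuation commutes with gluing along compatible paths.
\end{itemize}
Associativity and unitality should follow from the uniqueness of analytic continuation on overlaps, where both iterated expansions converge. Compatibility with $\PaB$ reduces to the generators of $\PaB$ (associators and braidings), because paths glued from paths in $U_A$ and $U_B$ stay in the product region.

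\textbf{Main obstacle.} I expect the convergence of the glued series to be the hardest step. The modules are only $C_1$-cofinite, not $C_2$, so the weight spaces need not give a graded trace-class situation. The approach I would try first is to prove that the formal glued expression satisfies a regular-singular system of ODEs in the internal-edge coordinate, coming from the $D$-module structure and the $L(-1)$, $L(0)$ actions. Convergence would then follow from the Frobenius method, as in Huang's proof of convergence of products of intertwining operators, applied leaf by leaf along the tree.
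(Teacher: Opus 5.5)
Your architecture matches the paper's. The multi-hom spaces are $\CB_{M_{[0;r]}}(U_A)$, $\PaB$ acts by analytic continuation between the simply connected domains $U_A$, and composition is the weight-by-weight sum \eqref{eq_comp}. The unit is $V$, since finite generation of $V^\vee$ puts $V$ into $\Vmodf$. The problem is Stage 1. Holonomicity with regular singularities does not make the solution sheaf a local system on $X_r(\C)$; it only makes it constructible. Already $D/D(z\partial_z+1)$ on $\C$ has the solutions $c/z$ away from $0$ and none near $0$. So, as stated, you have not justified continuation along arbitrary paths in $X_r(\C)$, which is exactly what the $\PaB$-action requires. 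The needed statement, which is what the refinement of Huang's argument actually proves \cite[Proposition~4.6]{M1}, is that $D_{M_{[0;r]}}$ is finitely generated as an $\Or^\alg$-module. This holds when $M_1,\dots,M_r$ are $C_1$-cofinite and $M_0^\vee$ is finitely generated; it is finite generation of the contragredient of the output module, not of the $M_i$, that enters. An $\mathcal{O}$-coherent $D$-module is a flat vector bundle \cite[Theorem~1.4.10]{HTT}, and Proposition~\ref{prop_coherence} follows with no appeal to holonomicity or regularity.

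Regularity is needed, but elsewhere, and your plan omits the corresponding key result. This is the expansion theorem (Theorem~\ref{thm_expansion}): every section of $\CB_{M_{[0;r]}}(U_A)$ is the sum of a series in $T_A^\conv$, i.e.\ $s_A$ is bijective. It is here that regular singularities along the boundary divisors $\zeta_e=0$ of the $A$-coordinate chart enter. Your gluing step silently presupposes this result. Inserting `the formal expansion of $g$', and making the weight-by-weight sum finite coefficientwise, both require $e_A(C_A)\in T_A$ and $e_B(C_B)\in T_B$. The same result drives convergence. Once the formal composite is shown to be a formal solution in $T_{A\circ_pB}$, it must converge on every admissible polydisc, jointly in all the $\zeta_e$. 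A one-variable Frobenius argument in the new edge coordinate, with the other coordinates as parameters, needs radii bounded uniformly as the other $\zeta_e\to 0$. That uniformity is again the regular-singular structure along the whole normal crossing divisor $\prod_e\zeta_e=0$, so it has to be established, not assumed leaf by leaf.
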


When \(V\) is rational \(C_2\)-cofinite, the above pseudo-braided category
is represented by tensor products. More precisely, let
\(\{N_\lambda\}_{\lambda\in\Irr}\) be a complete set of representatives
of simple \(V\)-modules. Define a bifunctor 
\begin{align}
\boxtimes:\Vmodf\times \Vmodf \rightarrow \Vmodf \label{intro_bifunctor}
\end{align}
by
\begin{align}
  M_1\boxtimes M_2
  =
  \bigoplus_{\lambda\in\Irr}
  N_\lambda\otimes_{\mathbb C}
  I\binom{N_\lambda}{M_1\,M_2}^{*}.
  \label{def_otimes}
\end{align}
Here \(I\binom{N_\lambda}{M_1\,M_2}^{*}\) denotes the dual vector space of the space of intertwining operators of type \(\binom{N_\lambda}{M_1\,M_2}\).
 The dual appears because \(I\binom{N_\lambda}{M_1\,M_2}\) is contravariant in \(M_1\) and \(M_2\), whereas \(\boxtimes\) in \eqref{intro_bifunctor} is a covariant bifunctor.

For every binary tree \(A\), let \(\boxtimes_A(M_1,\ldots,M_r)\) be the
iterated tensor product of shape \(A\), e.g., $\boxtimes_{(31)2}(M_1,M_2,M_3)= (M_3 \boxtimes M_1)\boxtimes M_2$.
 Then we have:
\begin{mainthm}\cite[Theorems~7.22 and~7.23]{M1}\label{thm_B}
If \(V\) is rational and \(C_2\)-cofinite, then there are natural
isomorphisms
\[
  \Hom_{V\text{-mod}_{f.g.}}
  \bigl(\boxtimes_A(M_1,\ldots,M_r),M_0\bigr)
  \cong
  \CB_{M_{[0;r]}}(U_A)\qquad \text{ for all \(A\in\Tr_r\).}
\]
In particular, \(\Vmodf\) inherits a
balanced braided tensor category structure.
\end{mainthm}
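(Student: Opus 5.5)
We prove the isomorphism by induction on the number of leaves $r$, using the operadic gluing maps $\mathrm{glue}_p$ together with the factorization of conformal blocks near the boundary strata attached to the tree $A$.

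\emph{Base case $r=2$.} We identify $\CB_{M_{[0;2]}}(U_{12})$ with the space of intertwining operators $I\binom{M_0}{M_1\,M_2}$. A solution of the $D$-module on $U_{12}$ is determined by its expansion in $z_1-z_2$, and translation covariance reduces it to a single-variable intertwining operator. The $L(-1)$-equation and the Jacobi identity are exactly the $D$-module relations.

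Rationality makes $\Vmodf$ semisimple with simple objects $N_\lambda$. Hence
\[
\Hom(M_1\boxtimes M_2,M_0)\cong\bigoplus_\lambda \Hom(N_\lambda,M_0)\otimes I\binom{N_\lambda}{M_1\,M_2}\cong I\binom{M_0}{M_1\,M_2},
\]
where the last step uses that $I\binom{\cdot}{M_1\,M_2}$ is additive in the upper slot and $\Hom(N_\lambda,N_\mu)=\delta_{\lambda\mu}\mathbb C$ by Schur's lemma. The case $r=1$ (the unit and the identity tree) is handled similarly using $I\binom{M_0}{V\,M_1}\cong\Hom(M_1,M_0)$.

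\emph{Inductive step.} Write $A=A'\circ_p B$, where $B$ is the two-leaf tree at a maximal cherry, or alternatively split $A$ at its root as $A=(A_1A_2)$. The root splitting gives
\[
\Hom(\boxtimes_{A_1}\boxtimes\boxtimes_{A_2},M_0)\cong\bigoplus_{\lambda,\mu}I\binom{M_0}{N_\lambda\,N_\mu}\otimes\Hom(\boxtimes_{A_1},N_\lambda)\otimes\Hom(\boxtimes_{A_2},N_\mu).
\]
By induction this equals $\bigoplus_{\lambda,\mu}\CB_{(0;\lambda\mu)}(U_{12})\otimes\CB_{(\lambda;A_1)}\otimes\CB_{(\mu;A_2)}$. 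The gluing map (iterated $\mathrm{glue}_p$, summed over intermediate simples) sends this into $\CB_{M_{[0;r]}}(U_A)$; this defines the map. Naturality in the $M_i$ follows because both sides are functorial and gluing is built from compositions of intertwining operators.

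\emph{Main obstacle: bijectivity of the gluing map.} Surjectivity and injectivity amount to a \emph{factorization} or sewing theorem. Every solution on $U_A$ expands uniquely, in the local coordinates attached to the internal edges of $A$, as a convergent sum over intermediate modules of products of lower blocks. I would attempt it as follows.

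First, use $C_2$-cofiniteness, via \cite{H2} and \cite[Section 4]{M1}, to get that the $D$-module is holonomic with regular singularities along the normal-crossing boundary defined by the tree coordinates. Solutions then have expansions $\sum \zeta^{h}(\log\zeta)^k(\text{power series})$.

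Next, take the leading coefficients along the edge variable of the root splitting. These define conformal blocks of lower arity with an inserted module $N_\lambda$. Rationality, through semisimplicity and the absence of logarithms, lets the intermediate space decompose as $\bigoplus N_\lambda\otimes(\text{multiplicity})$.

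Finally, compare dimensions. Convergence of iterated products (Huang's convergence and extension results) gives the map glue $\to\CB$. Conversely, the leading-term map $\CB\to$ glue is injective because a solution with vanishing leading terms vanishes, by the regular-singularity uniqueness. Together these show the two spaces are isomorphic.

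\emph{Tensor category structure.} The balanced braided tensor structure then follows formally by transporting the $\PaB$-action. Paths between the regions $U_A$ give, via the natural isomorphisms above and the Yoneda lemma, associators and braidings on the $\boxtimes_A$. The operad relations of $\PaB$ (pentagon, hexagon) hold because analytic continuation is functorial and compatible with gluing. The twist is $e^{2\pi i L(0)}$, coming from the loop of $z_1$ around $z_2$ that is central in $\PaB(2)$; it satisfies the balancing axiom.
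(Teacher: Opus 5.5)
Your architecture agrees with what the paper describes. The case $r=2$ is Proposition~\ref{prop_int} plus semisimplicity, the comparison map for general $A$ comes from the gluing of Theorem~\ref{thm_glue}, and braiding and associator are transported by Yoneda from $\rho(\sigma)$ and $\rho(\alpha)$. The paper itself does not reprove the key isomorphisms; it imports them from \cite[Theorems~7.22 and~7.23]{M1} together with the representability criterion \cite[Theorem~2.19]{M1}.

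The gap is in the one step that carries the content of the statement, namely the bijectivity of
\[
G_A=\bigoplus_{\lambda,\mu}\CB_{M_0;N_\lambda,N_\mu}(U_{12})\otimes\CB_{N_\lambda;\bullet}(U_{A_1})\otimes\CB_{N_\mu;\bullet}(U_{A_2})\longrightarrow\CB_{M_{[0;r]}}(U_A).
\]
Your claim that the leading coefficients along the root edges ``define conformal blocks of lower arity with an inserted module $N_\lambda$'' restates factorization rather than proving it. Those coefficients are just functions of the remaining variables. To read them as elements of $G_A$ you must first build an intermediate object carrying a $V$-action. For instance, you would show that the expansion coefficients, viewed as functionals on the inputs of a subtree, span a lower-bounded, grading-restricted $V$-module, with the action coming from the relations \eqref{eq_ker1}. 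You would also need that the block factors through this module, and that top-level data extend to full conformal blocks. Only then can rationality split the module into copies of the $N_\lambda$. The phrase ``rationality lets the intermediate space decompose'' presupposes exactly the module you have not constructed. Building it is the substance of Huang's associativity (operator product expansion) for intertwining operators, or of factorization of coinvariants via semisimplicity of Zhu's algebra. It does not follow from regular singularity of the $D$-module alone.

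The injectivity and dimension count also fail as written. Let $h_\lambda$ denote the lowest weight of $N_\lambda$. Along $\zeta_e\to 0$ every channel contributes exponents in a shift of $h_\lambda+\mathbb{Z}_{\geq 0}$, and distinct channels $\lambda\neq\lambda'$ on the same edge can have $h_\lambda-h_{\lambda'}\in\mathbb{Z}_{>0}$. In this resonant situation, regular-singularity theory does not imply that the coefficients at minimal exponents determine a solution. The coefficient at the top-level exponent of $N_\lambda$ also contains level-$(h_\lambda-h_{\lambda'})$ descendants of $N_{\lambda'}$, and separating the two again requires the module structure. What regular singularity does give is that a solution vanishing to sufficiently high order is zero. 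That map, however, lands in a space of truncated expansions, not in $G_A$, so it gives no bound $\dim\CB_{M_{[0;r]}}(U_A)\leq\dim G_A$.

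Moreover, even an injective map $\CB_{M_{[0;r]}}(U_A)\to G_A$ would not make gluing bijective. You would also need gluing to be injective, or the composite $G_A\to\CB\to G_A$ to be the identity, and you never address this.

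A smaller point concerns the twist. It is not produced by a loop in $\PaB(2)$: $\theta_M=e^{2\pi i L(0)}$ is defined directly. The loop $\sigma^2$ only verifies the balancing identity, through the monodromy of $z_{12}^{h_\lambda-h_1-h_2}$.
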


This gives a construction, from the viewpoint of conformal blocks and the
parenthesized braid operad, of a braided tensor category structure on the module
category of a rational \(C_2\)-cofinite vertex operator algebra. For this class
of vertex operator algebras, the existence of a braided tensor category
structure on the module category was established by Huang and Lepowsky
\cite{HL1,HL2,HL3,HL4,H1,H2,H3}. In the present approach, analytic
continuation of conformal blocks along the generators of \(\CPaB\) determines
the braiding and associator via the Yoneda isomorphisms.


\begin{wrapfigure}{r}{0.2\textwidth}
  \centering
  \vspace{-5mm}
    \includegraphics[width=2cm]{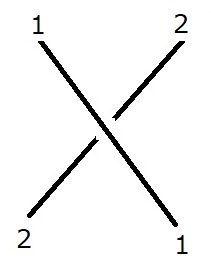}
    \caption{$\si$}\label{fig_sigma}
\end{wrapfigure}
Let
\[
  \sigma:(12)\longrightarrow(21)\qquad\qquad\qquad\qquad\qquad
\]
be the generator of \(\PaB(2)\) shown in Figure~\ref{fig_sigma}. Throughout
this note, we choose its geometric representative in \(X_2(\mathbb C)\) to be the clockwise half-turn path
\footnote{The counterclockwise half-turn gives
the reversed braiding. The clockwise convention is the one compatible with the
standard twist \(\theta_M=\exp(2\pi iL(0))\).} (see Remark \ref{rem_path_explicit}).
For \(M_1,M_2\in V\)-mod\(_{f.g.}\), let
\begin{align}
  R_{\lambda;M_1,M_2}:
  I\binom{N_\lambda}{M_1\,M_2}
  \longrightarrow
  I\binom{N_\lambda}{M_2\,M_1}\qquad\qquad\qquad\qquad\qquad
  \label{intro_R}
\end{align}
be the linear isomorphism obtained by analytic continuation along this path,
after identifying intertwining operators with conformal blocks on $X_2(\C)$.
Under the
decomposition \eqref{def_otimes}, the categorical braiding $B_{M_1,M_2}:M_1\boxtimes M_2 \rightarrow M_2 \boxtimes M_1$ is given by
\[
  B_{M_1,M_2}
  =
  \bigoplus_{\lambda\in\Irr}
  \id_{N_\lambda}\otimes
  \left((R_{\lambda;M_1,M_2})^{-1}\right)^*,
\]
where $\left((R_{\lambda;M_1,M_2})^{-1}\right)^*$ is the inverse transpose of \eqref{intro_R}.

Similarly, the associator is described by the analytic continuation between
\[
  \mathcal H^{(12)3}_\lambda
  =
  \bigoplus_{\mu\in\Irr}
  I\binom{N_\lambda}{N_\mu\,M_3}
  \otimes
  I\binom{N_\mu}{M_1\,M_2},
\quad
  \mathcal H^{1(23)}_\lambda
  =
  \bigoplus_{\nu\in\Irr}
  I\binom{N_\lambda}{M_1\,N_\nu}
  \otimes
  I\binom{N_\nu}{M_2\,M_3}.
\]
\begin{wrapfigure}{r}{0.2\textwidth}
  \centering
  \vspace{-3mm}
    \includegraphics[width=2.5cm]{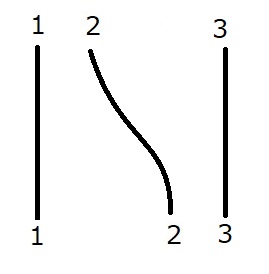}
    \caption{$\al$}\label{fig_alpha}
\end{wrapfigure}
These spaces are naturally identified with
$\CB_{N_\lambda;M_1,M_2,M_3}(U_{(12)3})$ and $\CB_{N_\lambda;M_1,M_2,M_3}(U_{1(23)})$ by Theorem \ref{thm_B}.
Let $F_{\lambda;M_1,M_2,M_3}:
  \mathcal H^{(12)3}_\lambda
  \to
  \mathcal H^{1(23)}_\lambda$
be the linear isomorphism obtained by analytic continuation along 
\(\alpha:(12)3\to1(23)\) in $X_3(\C)$ shown in Fig. \ref{fig_alpha}. Then the categorical associator is given by
\[
  \alpha_{M_1,M_2,M_3}
  =
  \bigoplus_{\lambda\in\Irr}
  \id_{N_\lambda}\otimes
  (F_{\lambda;M_1,M_2,M_3}^{-1})^*.
\]

The final section is devoted to the basic example \(V=L(\frac12,0)\), the
Virasoro vertex operator algebra of central charge \(c=\frac12\). This VOA
has three irreducible modules
\[
  L\left(\frac12,0\right),\qquad
  L\left(\frac12,\frac12\right),\qquad
  L\left(\frac12,\frac1{16}\right).
\]
The relevant four-point conformal blocks are solutions of BPZ-type
differential equations \cite{BPZ}. Although these functions are classical in conformal
field theory, 
we list them explicitly in Table \ref{table_block_intro} and determine the categorical braiding and
associator from the inverse transposes of the corresponding analytic
continuations.
\begin{table}[h]
\caption{Virasoro conformal blocks}
\label{table_block_intro}
  \begin{tabular}{|l|c||c|} \hline
$(h_0,h_1,h_2,h_3)$ & $h$ & $C_{h_0,h_1,h_2,h_3}^{h}(z_1,z_2,z_3)$ \\ \hline \hline
$(\frac{1}{16},\frac{1}{16},\frac{1}{16},\frac{1}{16}) $
& $0$ & $\frac{1}{2}\{z_{13}z_{23}z_{12}\}^{-\frac{1}{8}}
\Bigl((z_{13}^{\frac{1}{2}}+z_{23}^{\frac{1}{2}})^{\frac{1}{2}} +
(z_{13}^{\frac{1}{2}}-z_{23}^{\frac{1}{2}})^{\frac{1}{2}} \Bigr)$
 \\
    & $\frac{1}{2}$ & $\frac{1}{2} \{z_{13}z_{23}z_{12}\}^{-\frac{1}{8}}
\Bigl((z_{13}^{\frac{1}{2}}+z_{23}^{\frac{1}{2}})^{\frac{1}{2}} - 
(z_{13}^{\frac{1}{2}}-z_{23}^{\frac{1}{2}})^{\frac{1}{2}} \Bigr)$
 \\ \hline
$(\frac{1}{2},\frac{1}{2},\frac{1}{2},\frac{1}{2}) $
& $0$ & $\{z_{13}z_{23}z_{12}\}^{-1}(z_1^2+z_2^2+z_3^2-z_{1}z_2-z_1z_3-z_2z_3)$ \\ \hline

$(\frac{1}{2},\frac{1}{2},\frac{1}{16},\frac{1}{16}) $
& $0$ & $\ft z_{23}^{-\frac{1}{8}}\{z_{13}z_{12}\}^{-\frac{1}{2}}
(2z_1-z_2-z_3)$ \\
$(\frac{1}{2},\frac{1}{16},\frac{1}{2},\frac{1}{16}) $
& $\frac{1}{16}$ &
$\frac{1}{2}z_{13}^{-\frac{1}{8}}\{z_{23}z_{12}\}^{-\frac{1}{2}}
(z_1-2z_2+z_3)$ \\
$(\frac{1}{16},\frac{1}{2},\frac{1}{2},\frac{1}{16}) $
& $\frac{1}{16}$ & $\frac{1}{2}(z_{13}z_{23})^{-\frac{1}{2}} z_{12}^{-1}(z_1+z_2-2z_3)$\\
$(\frac{1}{2},\frac{1}{16},\frac{1}{16},\frac{1}{2}) $
& $\frac{1}{16}$ & $\frac{1}{2}\{z_{13}z_{23}\}^{-\frac{1}{2}}z_{12}^{-\frac{1}{8}}(z_1+z_2-2z_3)$ \\
$(\frac{1}{16},\frac{1}{2},\frac{1}{16},\frac{1}{2}) $
& $\frac{1}{16}$ & $\frac{1}{2}z_{13}^{-1}\{z_{23}z_{12}\}^{-\frac{1}{2}}(z_1+z_3-2z_2)$\\
$(\frac{1}{16},\frac{1}{16},\frac{1}{2},\frac{1}{2}) $
& $0$ & $\ft y^{-1}\{z_{13}z_{12}\}^{-\frac{1}{2}}(2z_1-z_2-z_3)$ \\ \hline
\end{tabular}
\end{table}

The result is that the balanced braided tensor category \(L(\frac12,0)\)-mod
is equivalent to the \emph{Tambara--Yamagami category} \cite{TY}
\[
  \mathrm{TY}(\mathbb Z_2,\chi,2^{-1/2}),
  \qquad
  \chi(1,1)=-1.
\]
Thus the example gives a concrete comparison between the BPZ computation of
Virasoro conformal blocks and the \(F\)- and \(R\)-symbols of the resulting
braided tensor category.

While this note focuses on genus-zero theories, we also mention the
higher-genus theory of conformal blocks developed by Damiolini, Gibney,
Tarasca, and Krashen \cite{DGT1,DGT2}. It would be interesting to clarify how
their algebro-geometric framework is related to the OPE-type convergence
domains and operadic structures considered here; see also \cite{DW},
especially Expectation~5.4.

The paper is organized as follows. Section~2 recalls the domains of convergence
\(U_A\subset X_r(\mathbb C)\). Section~3 reviews the consistency of VOA
products under analytic continuation. Sections~4 and~5 recall intertwining
operators, conformal blocks, expansion maps, and gluing. Section~6 reviews
pseudo-braided categories and explains Main Theorems A and B. Section~7
computes the conformal blocks of \(L(\frac12,0)\), derives the braiding and
associator via inverse transpose of continuation maps, and identifies the
resulting balanced braided tensor category with a Tambara--Yamagami category.

\section{Configuration space and trees}
\label{sec_model_C}

In Section \ref{sec_model_C}, we recall the definition of the open regions $U_A,{U}_A^c \subset X_r(\C)$ from \cite[Section 3]{M1}.

Given a product (a binary operation), repeating them creates $n$-ary operations. 
In general, $n$-ary operations depend on orders and parentheses of binary operations,
and all possible $n$-ary operations have a one-to-one correspondence with binary trees.

Let $\Tr_{r}$ be the set of all binary trees whose leaves are labeled by $[r]=\{1,2,\dots,r\}$.
Each element in $\Tr_r$ can be regarded as a parenthesized word of $\{1,2,\dots,r\}$,
that is, non-associative, non-commutative monomials on this set in which every letter appears exactly once.
For example, $(5(23))((17)(64))\in T_7$ corresponds to the tree in Fig. \ref{fig_tree_example0}.
Note that $\Tr_0$ consists of the empty word
and $\Tr_3$, for example, is a set of 12 elements
\begin{align*}
\Tr_0&=\{\emptyset\},\\
\Tr_3&=\{1(23), (12)3, 1(32), (13)2, 2(13),(21)3,2(31),(23)1,3(12),(31)2,3(21),(32)1\}
\end{align*}
and
$\Tr_4$ consists of all permutations of $5$ elements
\begin{align*}
\{(12)(34),1(2(34)),1((23)4),(1(23))4,((12)3)4\}.
\end{align*}

\begin{minipage}[c]{.35\textwidth}
\centering
\begin{forest}
for tree={
  l sep=20pt,
  parent anchor=south,
  align=center
}
[
[[5][[2][3]]]
[[[1][7]]
[[6][4]]]
]
\end{forest}
\captionof{figure}{}
\label{fig_tree_example0}
\end{minipage}
\begin{minipage}[l]{.7\textwidth}
\centering
For $A\in \Tr_r$, we will use the following notations:
\begin{align*}
\Leaf(A)&=\{\text{the set of all leaves of }A\}\\
V(A)&=\{\text{the set of all vertices of }A  \text{ which are}\\
&\quad\quad\text{ not leaves}\}\\
E(A)&=\{\text{the set of all edges of }A \text{ which are}\\
&\quad\quad\text{ not connected to leaves}\}.
\end{align*}
\end{minipage}

Let $(z_1,\dots,z_r)$ be the standard coordinates of $\C^r$.
We will define local coordinates associated with trees $\Tr_r$.
Let $A\in \Tr_r$.
For each edge $e\in E(A)$, let $u(e)$ denote the upper vertex and $d(e)$ denote the lower vertex.
Define maps $L,R: V(A)\rightarrow \Leaf(A)$ as follows:
For each vertex $v\in V(A)$, $R(v)$ is defined by the rightmost leaf that is a descendant of $v$
and $L(v)$ by the rightmost leaf among the leaves that are descendants of the left child of $v$. 
Let $t_A$ be the uppermost vertex and $r_A$ be the rightmost leaf among all leaves.
Then, $r_A=R(t_A)$.

\begin{minipage}[c]{.5\textwidth}
\centering
\begin{forest}
for tree={
  l sep=20pt,
  parent anchor=south,
  align=center
}
[$t_{A}$
[$v_1$[5][[2][3]]]
[$v_2$,edge label={node[midway,right]{$e_0$}}[[1][7]]
[[6][4]]]
]
\end{forest}
\end{minipage}
\begin{minipage}[c]{.5\textwidth}
\centering
In the case of the left figure,
\begin{align*}
A&=(5(23))((17)(64)),\\
d(e_0)&=v_2,\quad u(e_0)=t_{A},\\
L(v_1)&=5, \quad R(v_1)=3,\\
L(v_2)&=7, \quad R(v_2)=4,\\
r_A&=4.
\end{align*}
\end{minipage}


The functions $\{z_v:\Xr\rightarrow \C\}_{v\in V(A)}$ and $\{\zeta_e:\Xr\rightarrow \C\}_{e\in E(A)}$ are
defined by
\begin{align}
z_v &= z_{L(v)}-z_{R(v)},\label{eq_vertex_change}\\
\zeta_e &= \frac{z_{d(e)}}{z_{u(e)}}.
\end{align}
This gives a family of $r-1$ functions $\{z_v:\Xr\rightarrow \C\}_{v\in V(A)}$
and a family of $r-2$ functions $\{\zeta_e:\Xr\rightarrow \C\}_{e \in E(A)}$.
Let $z_A:\Xr \rightarrow \C,\quad(z_1,\dots,z_r)\mapsto z_{r_A}$ be the projection onto the $r_A$-th component.
Then,
\begin{align*}
(z_v)_{v\in V(A)}\times z_A: \Xr\rightarrow \C^{r-1}\times \C
\end{align*}
forms local coordinates on $X_r(\C)$.
{\bf The $A$-coordinate system} is the system of functions
\begin{align}
\Psi_A = 
z_A\times x_A\times (\zeta_e)_{e\in E(A)}: \Xr\rightarrow \C\times \C\times  \C^{r-2},
\label{eq_zeta_coordinate}
\end{align}
where $x_A=z_{t_A}:\Xr \rightarrow \C$.

For example, for $A=(23)((15)4) \in \Tr_5$, the $A$-coordinate system is given by:
\begin{align}
\Psi_{(23)((15)4)}=
(z_4,z_3-z_4,\ze_a=\frac{z_2-z_3}{z_3-z_4},\ze_b=\frac{z_1-z_5}{z_5-z_4},\ze_c=\frac{z_5-z_4}{z_3-z_4}),
\label{eq_example_psi}
\end{align}
\begin{minipage}[c]{.5\textwidth}
\centering
where the labels $\{a,b,c\}$ of the edges are given as in Fig. \ref{fig_tree_example2}.
In Fig. \ref{fig_tree_example2}, we have assigned to each vertex $v$ of the tree the coordinate $z_v = z_{L(v)} - z_{R(v)}$. Such a diagram is useful for explicitly describing the $A$-coordinates.
It is easy to see that the inverse function $\Psi_A^{-1}:\C^2\times \C^{E(A)}\rightarrow \C^r$ is a polynomial
of $\{\ze_e\}_{e\in E(A)}$ and $x_A,z_A$.
\end{minipage}
\begin{minipage}[c]{.5\textwidth}
\centering
\begin{forest}
for tree={
  l sep=20pt,
  parent anchor=south,
  align=center
}
[$z_3-z_4$
[$z_2-z_3$,edge label={node[midway,left]{a}}[2][3]]
[$z_5-z_4$,edge label={node[midway,right]{c}}[$z_1-z_5$,edge label={node[midway,left]{b}}[1][5]]
[4]]
]
\end{forest}
\captionof{figure}{}
\label{fig_tree_example2}
\end{minipage}
For example,
\begin{align*}
\Psi_{(23)((15)4)}^{-1}=(z_1,z_2,z_3,z_4,z_5)
=(x_A\ze_c(1+\ze_b)+z_A, (1+\ze_a)x_A+z_A,x_A+z_A,z_A,\ze_cx_A+z_A).
\end{align*}
Thus, we have:
\begin{prop}
\label{prop_psiA}
For any $A\in \Tr_r$, 
$\Psi_A$ is a biholomorphic function from $X_r(\C)$ onto the image in $\C^r$.
Furthermore, $\Psi_A^{-1}$ is a polynomial of $\{\ze_e\}_{e\in E(A)}$ and $x_A,z_A$,
and thus can be extended to a holomorphic function $\Psi_A^{-1}:\C^2\times \C^{E(A)}\rightarrow \C^r$.
\end{prop}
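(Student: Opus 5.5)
The plan is to make the inverse of $\Psi_A$ explicit by recovering each $z_i$ from the $A$-coordinates through a walk down the tree from the root.

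First I would note that for every vertex $v\in V(A)$ the function $z_v$ is a product of $x_A$ and edge variables. Let $t_A=v_0,v_1,\dots,v_k=v$ be the unique path from the top vertex, with edges $e_j$ satisfying $u(e_j)=v_{j-1}$ and $d(e_j)=v_j$. Telescoping gives
\[
z_v = x_A\prod_{j=1}^k \ze_{e_j}.
\]
This is a monomial in $x_A$ and $\{\ze_e\}$, and it holds on $X_r(\C)$.

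Next, for a leaf $i$ I would write $z_i-z_{r_A}$ as a sum of the $z_v$. Starting at $t_A$ I follow the path down to $i$. Each time the path descends into the left child of a vertex $v$, the rightmost leaf of the current subtree changes from $R(v)$ to $L(v)$, and the difference picks up $z_{L(v)}-z_{R(v)}=z_v$. Descending into a right child leaves the rightmost leaf unchanged. By induction on depth,
\[
z_i=z_A+\sum_{v:\ i \text{ lies below the left child of } v} z_v .
\]
This is the formula visible in the worked example, and it expresses $z_i$ as a polynomial $P_i(z_A,x_A,\ze)$.

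The key point to check is that the sum really telescopes, that is, that the rightmost leaf of the left subtree of $v$ is exactly $L(v)$. This holds by the definition of $L$. It follows that $P=(P_1,\dots,P_r)$ satisfies $P\circ\Psi_A=\id$ on $X_r(\C)$. Hence $\Psi_A$ is injective with a holomorphic left inverse, and $P$ is a polynomial map defined on all of $\C^2\times\C^{E(A)}$, which gives the extension. The map $\Psi_A$ itself is holomorphic on $X_r(\C)$ because the denominators $z_{u(e)}=z_{L}-z_{R}$ are differences of distinct coordinates and so do not vanish there.

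The main obstacle is showing that the image $\Psi_A(X_r(\C))$ is open and that $\Psi_A$ is biholomorphic onto it. I would use the identity $\Psi_A\circ P=\id$ on the set $W=\{x_A\neq0,\ \ze_e\neq0\}\cap P^{-1}(X_r(\C))$. This follows by recomputing $z_v$ from $P$: the differences reproduce the monomials, and the ratios of consecutive monomials reproduce each $\ze_e$ because the monomials are nonzero. The set $W$ is open, and $\Psi_A(X_r(\C))\subset W$ because on $X_r(\C)$ all $z_v\neq0$. So $\Psi_A(X_r(\C))=W$, which makes $\Psi_A$ a bijection onto an open set with holomorphic inverse $P|_W$, hence biholomorphic.
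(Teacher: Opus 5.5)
Your proof is correct and follows the same route as the paper, which only writes out the polynomial inverse $\Psi_A^{-1}$ for the example $A=(23)((15)4)$ and asserts that the general case is easy to see. Your telescoping formulas $z_v=x_A\prod_j\ze_{e_j}$ and $z_i=z_A+\sum z_v$ (summing over the vertices where the path to leaf $i$ turns left), together with the check $\Psi_A\circ P=\mathrm{id}$ on the open set $W$, supply the details the paper leaves implicit, including the openness of the image needed for biholomorphicity.
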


Set 
\begin{align*}
\Or^\alg = \C[z_1,\dots,z_r,(z_i-z_j)^\pm],
\end{align*}
a ring of regular functions on $X_r(\C)$, and
\begin{align*}
T_A &= \C[[\zeta_e\mid e \in E(A)]][z_A,\log x_A, x_A^\C,\log\zeta_e,\zeta_e^\C \mid e \in E(A)],
\end{align*}
which is a space of formal power series spanned by formal power series of the form
\begin{align*}
z_A^n x_A^r (\log x_A)^k \Pi_{e\in E(A)}\zeta_e^{r_e}(\log \zeta_e)^{k_e}F
\end{align*}
with $F\in\C[[\zeta_e\mid e \in E(A)]]$,
$n,k,k_e \in \Z_{\geq 0}$ and $r_e,r \in \C$ ($e\in E(A)$).

Any function of $\Or^\alg$ can be expanded as a formal power series in $T_A$.
For example, in the case of $A=(23)((15)4) \in \Tr_5$, we have:
\begin{align}
\begin{split}
(z_2-z_1)^{-1} &= \left((z_2-z_3)+(z_3-z_4)-(z_5-z_4)-(z_1-z_5)
\right)^{-1}\\
&= (z_3-z_4)^{-1}\left(1 + \frac{(z_2-z_3)}{(z_3-z_4)}-\frac{(z_5-z_4)}{(z_3-z_4)}-\frac{(z_1-z_5)}{(z_3-z_4)}
\right)^{-1}\\
&=x_{(23)((15)4)}^{-1}(1+\zeta_a-\zeta_c-\zeta_b\zeta_c)^{-1}\\
&=x_{(23)((15)4)}^{-1}\sum_{l=0}^\infty (-\zeta_a+\zeta_c+\zeta_b\ze_c)^l
\in \C[[\zeta_a,\zeta_b,\zeta_c]][x_{(23)((15)4)}^{-1}].\label{eq_example_conv}
\end{split}
\end{align}

The series in $T_A$ is called {\it a parenthesized formal power series}.
It is noteworthy that $T_A$ naturally inherits a $\Or^\alg$-algebra structure, by the $\C$-algebra homomorphism:
\begin{align}
e_A:\Or^\alg \rightarrow T_A. \label{lem_module_ort}
\end{align}
In particular, $T_A$ is an $\Or^\alg$-module.

Next, consider the radius of convergence of parenthesized formal power series.
For $p>0$, set
\begin{align*}
\D_p &= \{\ze\in \C\mid |\ze|<p\},\\
\D_p^\times &=\{\ze \in \C\mid 0<|\ze|<p\}.
\end{align*}

Let $\mathfrak{p}=(p_e)_{e\in E(A)}\in \R_{>0}^{E(A)}$.
Let $\C[[\zeta_e\mid e\in E(A)]]_{\mathfrak{p}}^\conv$
be a subspace of $\C[[\zeta_e\mid e\in E(A)]]$
consisting of formal power series which are absolutely convergent in $\Pi_{e\in E(A)}\D_{p_e}$
and set
\begin{align*}
T_A^{\mathfrak{p}}=\C[[\zeta_e\mid e\in E(A)]]_{\mathfrak{p}}^\conv[z_A,z_v^\C,\log z_v\mid v\in V(A)],
\end{align*}
a subspace of $T_A$.
It is important to note that the region of absolute convergence of 
$e_{(23)((15)4)}((z_2-z_1)^{-1})$ in the example  \eqref{eq_example_conv}
is not $|\zeta_a|<1,|\zeta_b|<1,|\zeta_c|<1$.
Since 
$e_{(23)((15)4)}((z_2-z_1)^{-1})=x_{{(23)((15)4)}}^{-1}\sum_{l=0}^\infty (-\zeta_a+\zeta_c+\zeta_b\zeta_c)^l$,
$e_{(23)((15)4)}\left((z_2-z_1)^{-1}\right) \in T_{(23)((15)4)}^{\mathfrak{p}}$
if and only if $p_a+p_b+p_cp_b <1$.


\begin{dfn}
A sequence of positive real numbers $(p_e)_{e\in E(A)} \in \R_{>0}^{E(A)}$ is called {\it $A$-admissible} if $\Psi_A^{-1}(\C\times \C^\times \times \Pi_{e\in E(A)}\D_{p_e}^\times) \subset X_r(\C)$,
where $\Psi_A^{-1}$ is the polynomials in Proposition \ref{prop_psiA}.
\end{dfn}

A convergent series $f\in T_A^{\mathfrak{p}}$ is a multi-valued holomorphic function on $\Psi_A^{-1}(\C \times \C^\times \times \Pi_{e\in E(A)}\D_{p_e}^\times)$ 
because it contains $\log(z_v)$ and $z_v^r$.
Below, we will fix the branch.
For $A$-admissible numbers $\mathfrak{p}$,
set
\begin{align*}
U_{A}^{\mathfrak{p}}&=\Phi_A^{-1}(\C \times \C^\cut \times \Pi_{e\in E(A)}\D_{p_e}^\cut),\\
{U}_{A}^{c,\mathfrak{p}}&=\Phi_A^{-1}(\C \times \C^\times \times \Pi_{e\in E(A)}\D_{p_e}^\times),
\end{align*}
where 
\begin{align*}
\R_-&= \{r\in \R\mid r\leq 0\},\\
\C^\cut &= \C \setminus \R_{-},\\
\D_p^\cut &=\{\ze \in \C^\cut \mid |\ze|<p \}.
\end{align*}

Define the branch of $\Log:\C^\cut \rightarrow \C$ by
\begin{align}
\Log(\exp(\pi i t))= \pi i t
\label{eq_log_def}
\end{align}
for $t\in (-1,1)$. In particular, $\Arg= \mathrm{Im}\,\Log$ takes values in $(-\pi,\pi)$.

Then, each formal power series in $T_A^{\mathfrak{p}}$
can be regarded as a single-valued holomorphic function on
$U_{A}^\mathfrak{p}$.
Set
\begin{align}
U_A  &= \cup_{\mathfrak{p}:A\text{-admissible}}U_{A}^{\mathfrak{p}} \subset \Xr,\\
{U}_{A}^c  &= \cup_{\mathfrak{p}:A\text{-admissible}}{U}_{A}^{c,\mathfrak{p}} \subset \Xr.\label{eq_no_cut}
\end{align}
\begin{lem}
For any $A \in \Tr_{[r]}$, $U_A$ is a connected simply-connected open subset of $\Xr$.
\end{lem}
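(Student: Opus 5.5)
Since $\Psi_A$ is a biholomorphism of $X_r(\C)$ onto its image (Proposition \ref{prop_psiA}), I will work entirely in the $A$-coordinates $(z_A,x_A,(\ze_e)_{e\in E(A)})$. Here $\Phi_A$ in the definition of $U_A^{\mathfrak p}$ is read as $\Psi_A$. For each $A$-admissible $\mathfrak p$, the set
\[
W^{\mathfrak p}=\C\times\C^\cut\times\Pi_{e\in E(A)}\D_{p_e}^\cut
\]
lies in $\Psi_A(X_r(\C))$ by admissibility. It is a product of star-shaped domains: $\C^\cut$ and $\D_p^\cut$ are star-shaped about the point $1$, resp.\ $p/2$. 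Hence $W^{\mathfrak p}$ is contractible, and so is $U_A^{\mathfrak p}=\Psi_A^{-1}(W^{\mathfrak p})$, since $\Psi_A^{-1}$ restricted to $\Psi_A(X_r(\C))$ is a homeomorphism. It is also open.

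Next I show that the family $\{U_A^{\mathfrak p}\}$ is directed. If $\mathfrak p$ is admissible and $\mathfrak q\le\mathfrak p$ componentwise, then $\mathfrak q$ is admissible and $W^{\mathfrak q}\subset W^{\mathfrak p}$. Given two admissible $\mathfrak p,\mathfrak p'$, I do not expect the componentwise maximum to be admissible in general. Instead I use the minimum $\mathfrak m=\min(\mathfrak p,\mathfrak p')$, which is admissible and gives $U_A^{\mathfrak m}\subset U_A^{\mathfrak p}\cap U_A^{\mathfrak p'}$, so the two sets meet. So each $U_A^{\mathfrak p}$ is connected and any two of them intersect, and therefore $U_A=\bigcup_{\mathfrak p}U_A^{\mathfrak p}$ is connected. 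To make sure the union is nonempty, I need at least one admissible $\mathfrak p$. Taking all $p_e=\epsilon$ small works: $z_i-z_j$ is $x_A$ times a polynomial in the $\ze_e$ whose value at $\ze=0$ has constant term a nonzero product of $\ze$'s along the tree. This needs care, and it is presumably the tree-factorization visible in the example $\Psi^{-1}_{(23)((15)4)}$.

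Simple connectivity is the main obstacle, because a union of directed simply connected sets need not be simply connected unless it is a nested union. My plan is to show that $\Psi_A(U_A)$ is itself star-shaped in a suitable sense. Fix $\mathfrak p$ and a point $w=(z,x,\ze)\in W^{\mathfrak p}$. Define the retraction $H_t(w)=(z,\,x^{1-t},\,t'\ze)$ with $t'=1-t$, where $x^{1-t}$ is taken with the principal branch. Since $\C^\cut$ is invariant under $x\mapsto x^{s}$ for $s\in[0,1]$, and each $\D_{p_e}^\cut$ is invariant under scaling by $s\in(0,1]$, the homotopy stays inside $W^{\mathfrak p}$ for $t<1$. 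At $t=1$ the $\ze$-coordinates would be $0$, which is excluded, so I stop at scaling by a small fixed $\delta$. Since the homotopy is defined by one formula independent of $\mathfrak p$, it preserves the whole union $W=\bigcup_{\mathfrak p}W^{\mathfrak p}$. It deforms $W$ into the set $\{x=1\}\times\{\ze\in\Pi\D^\cut_{\delta}\}\times\C$, more precisely into its intersection with $W$, which is $\C\times\{1\}\times\Pi_e\D_{\delta}^\cut$. Here one takes $\delta$ smaller than some fixed admissible radius and first retracts radially in each $\ze_e$ towards a ray. To handle this cleanly, I would instead use the map $\ze_e\mapsto s\,\ze_e$ followed by a straight-line homotopy in $\D^\cut_{p}$ to the point $p/2$. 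For a compact loop $\gamma$ in $W$, the loop lies in finitely many $W^{\mathfrak p}$, and I take the minimum radius $\mathfrak m$. The scaling first pushes $\gamma$ into $W^{\mathfrak m}$, which is contractible. Therefore $\gamma$ is null-homotopic in $W$, and $U_A=\Psi_A^{-1}(W)$ is simply connected.
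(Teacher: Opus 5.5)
Your argument is correct. The connectedness part is the same as the paper's: each $U_A^{\mathfrak{p}}$ is contractible as a product of star-shaped slit domains, and the componentwise minimum of two admissible radii is admissible.

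For simple connectivity you take a different route. The paper also uses compactness to put a loop into a finite union $U_A^{\mathfrak{p}_1}\cup\cdots\cup U_A^{\mathfrak{p}_N}$, but then invokes Seifert--van Kampen. You instead note that $\ze_e\mapsto s\ze_e$, $s\in(0,1]$, preserves every $W^{\mathfrak{p}}$, hence all of $\Psi_A(U_A)$. So it deforms the loop inside $U_A$ into the single contractible set $W^{\mathfrak{m}}$, $\mathfrak{m}=\min_i\mathfrak{p}_i$, once $s\le\min_e m_e/\max_i(p_i)_e$.

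Your route avoids checking van Kampen's actual hypothesis, namely that the pairwise intersections are path-connected. The paper only says that $\bigcap_i U_A^{\mathfrak{p}_i}\neq\emptyset$; its argument works because $U_A^{\mathfrak{p}}\cap U_A^{\mathfrak{p}'}=U_A^{\min(\mathfrak{p},\mathfrak{p}')}$. Your scaling also applies verbatim to maps of spheres of any dimension, so it shows that all homotopy groups of $U_A$ vanish. The paper's version is shorter and uses only the intersection pattern of the sets.

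A few of your intermediate remarks should be dropped or repaired. The $x^{1-t}$ deformation is unnecessary, since the $x_A$-factor $\C^{\cut}$ does not depend on $\mathfrak{p}$. Your claim that a fixed scaling by $\delta$ deforms all of $W$ into $\C\times\C^{\cut}\times\prod_e\D_{\delta}^{\cut}$ needs a uniform bound on admissible radii, which you do not prove. The bound is true: $p_e\le 1$, since $\ze_e=-1$, resp. $\ze_e=1$, forces a collision when $d(e)$ is the left, resp. right, child of $u(e)$. Your final compact-loop argument does not need it anyway.

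Finally, your nonemptiness sketch is garbled as written. What holds is
\[
z_i-z_j=\pm x_A\Bigl(\prod_{e}\ze_e\Bigr)\bigl(1+q(\ze)\bigr),
\]
where the product runs over the edges from $t_A$ down to the lowest common ancestor of $i$ and $j$, and $q$ is a polynomial without constant term. Hence all $p_e=\epsilon$ with $\epsilon$ small are admissible, a point the paper leaves implicit.
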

\begin{proof}
We note that $U_A^{\mathfrak{p}}$ is contractible. For any $A$-admissible $\mathfrak{p}_1, \mathfrak{p}_2 \in \R_{>0}^{E(A)}$, set $q_e= \min \{(p_1)_e,(p_2)_e\}$ for $e\in E(A)$. Then, $\mathfrak{q} =(q_e)_{e\in E(A)}$ is also $A$-admissible.
Hence, $U_A^\mathfrak{q} \subset U_A^{\mathfrak{p}_1} \cap U_A^{\mathfrak{p}_2}$ is non-empty, and thus, $U_A$ is path-connected. Let $\ga:S^1 \rightarrow U_A$ be a continuous map. Since the image of $S^1$ is compact, there are finitely many $A$-admissible $\mathfrak{p}_1,\dots,\mathfrak{p}_N$ such that $\ga:S^1 \rightarrow \bigcup_{i=1}^N U_A^{\mathfrak{p}_i} \subset U_A$. Since $\bigcap_{i=1}^N U_A^{\mathfrak{p}_i}$ is non-empty, by Seifert-van Kampen theorem, $\pi_1(\bigcup_{i=1}^N U_A^{\mathfrak{p}_i})=1$.
\end{proof}

Set
\begin{align*}
T_A^\conv = \cap_{\mathfrak{p}:A\text{-admissible}} T_A^\mathfrak{p} \subset T_A,
\end{align*}
which is a vector space of convergent parenthesized formal power series.
Then, by \eqref{lem_module_ort}, we have a $\C$-algebra homomorphism
\begin{align}
e_A: \Or^\alg \rightarrow T_A^\conv.
\label{eq_eA_conv}
\end{align}

Set
\begin{align*}
\pa_i=\frac{d}{dz_i},
\end{align*}
the partial differential operator  on $\Xr$ with respect to the standard coordinate $(z_1,\dots,z_r)$,
and
\begin{align*}
\Dr = \C[\pa_1,\dots,\pa_r, z_1,\dots,z_r,(z_i-z_j)^\pm\mid 1\leq i <j\leq r],
\end{align*}
a ring of differential operators on $\Xr$. Then, it is clear that $T_A^\conv$ and $\Or^\alg$ are $\Dr$-modules
and $e_A$ in \eqref{eq_eA_conv} is a $\Dr$-module homomorphism.

\section{Consistency of vertex operator algebras}
\label{sec_consistency}
Let $V$ be a vertex operator algebra. Then, in particular, the following properties hold (see for example \cite{LL,FLM,FB}):
\begin{enumerate}
\item[P1)]
$[L(-1),Y(a,z)]=\frac{d}{dz}Y(a,z) = Y(L(-1)a,z)$ for any $a \in V$.
\item[P2)]
$[L(0),Y(a,z)]=(z\frac{d}{dz}+n)Y(a,z)$ for any $a \in V_n$, or equivalently $a(k)$ maps $V_m$ to $V_{m+n-k-1}$ for any $n,m,k \in \Z$.
\item[P3)]
For any $a, b \in V$ and $n\in \Z$,
\begin{align*}
[a(n), Y(b,z)]&=\sum_{k\geq 0} \binom{n}{k} Y(a(k)b,z)z^{n-k},\\
Y(a(n)b,z)&=\sum_{k\geq 0}\binom{n}{k}
\left( 
a(n-k)Y(b,z)(-z)^k
- Y(b,z)a(k)(-z)^{n-k}
\right).
\end{align*}
\end{enumerate}
Throughout this paper, we assume that a vertex operator algebra $V$ satisfies
\begin{itemize}
\item
$V= \bigoplus_{n \geq 0}V_n$, $V_0=\C \va$ and $\dim V_n <\infty$,
\end{itemize}
which is often called of CFT type.
Let $A \in \Tr_r$.
As was mentioned in the introduction, the vertex operator is a product depending on $z$, so we can consider the corresponding parenthesized product.
For example, the product corresponding to $1(23), (12)3 \in \Tr_3$ is given by
\begin{align*}
a_1 \cdot_{z_{13}} \Bigl(a_2 \cdot_{z_{23}} a_3 \Bigr)&= Y(a_1,z_{13})Y(a_2,z_{23})a_3\\
\Bigl( a_1 \cdot_{z_{12}} a_2 \Bigr) \cdot_{z_{23}} a_3&= Y(Y(a_1,z_{12})a_2,z_{23})a_3.
\end{align*}
Here \(Y(a,z)b\) corresponds to the configuration in which \(a\) is inserted
at \(z\) and \(b\) is inserted at \(0\). Thus, in an iterated composition, the
variables of the vertex operators must be chosen according to the shape of
the tree. Each internal vertex \(v\in V(A)\) gives one vertex operator
product. For \(A\in\Tr_r\) and \(a_1,\ldots,a_r\in V\), we denote by
\[
  Y_A(a_1,\ldots,a_r,z_1,\ldots,z_r)
\]
the composition of vertex operators determined by \(A\), with the change of
variables in \eqref{eq_vertex_change}. We call it a {\it parenthesized vertex
operator}.
Set
\begin{align*}
V^\vee = \bigoplus_{n\geq 0}V_n^*,
\end{align*}
where $V_n^*$ is the dual vector space of $V_n$.
For $u\in V^\vee$, $\langle u, \exp(L(-1)z_A) Y_A(a_1,\dots,a_r,z_1,\dots,z_r)\rangle$ is a formal power series in $T_A$ by (P2).

%
%

\begin{dfn}\cite{M2}\label{dfn_VOA_consistent}
A vertex operator algebra $V$ is called consistent if the following properties hold for any $u \in V^\vee$ and $a_{[r]}\in V^{\otimes r}$ for $r\geq 2$:
\begin{enumerate}
\item
For any tree $A\in \Tr_r$, the formal power series $\langle u, \exp(L(-1)z_A)Y_A(a_1,\dots,a_r,z_{[r]})\rangle$ is in $T_A^\conv$, i.e., is absolutely convergent in ${U}_A^c$.
Denote this holomorphic function on ${U}_A^c$ by $S_A(u,a_{[r]},z_{[r]})$.
\item
There exists a sequence of linear maps
\begin{align*}
S_r:V^\vee \otimes V^{\otimes r} \rightarrow \C[z_i,(z_i-z_j)^\pm\mid i\neq j],\quad\quad r \geq 2.
\end{align*}
such that $S_A(u,a_{[r]},z_{[r]}) = S_r |_{{U}_A^c}$ for any tree $A\in \Tr_r$ as holomorphic functions.
\end{enumerate}
\end{dfn}

The following proposition is proved in \cite[Theorem 3.9]{M2}:
\begin{prop}\label{prop_VOA_consistent}
A vertex operator algebra of CFT type is consistent.
\end{prop}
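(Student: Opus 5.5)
The plan is to prove both conditions of Definition \ref{dfn_VOA_consistent} at once, by induction on $r$. The inductive step uses the operadic decomposition of trees, $A=B\circ_p C$. The global object will be the $r$-point correlation function $S_r(u,a_{[r]},z_{[r]})$. I would construct it as the rational function attached to the sequential tree $1(2(\cdots((r-1)r)\cdots))$:
\[
\langle u,\exp(L(-1)z_r)Y(a_1,z_{1r})\cdots Y(a_{r-1},z_{r-1,r})a_r\rangle .
\]
By locality and the standard rationality theorem for products of vertex operators (see \cite{FLM,LL,FB}), this series lies in $\C[z_i,(z_i-z_j)^{\pm}]$.

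More precisely, for each $u\in V_n^*$ I would use (P2) to reduce the inner product to finitely many weight components. The pole order in each $z_i-z_j$ is then bounded by locality (P3). The translation by $\exp(L(-1)z_r)$ is handled by (P1), since $\langle u,\exp(L(-1)z)\,\cdot\,\rangle$ is a polynomial in $z$ of bounded degree on each graded piece. Next I would show that the resulting function is independent of the chosen order of leaves, using locality for $V$. This gives a single $S_r$.

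For a general tree $A$, I would use the associativity identity $Y(Y(a,z_{12})b,z_{23})=Y(a,z_{13})Y(b,z_{23})$. This is understood as an equality of expansions of one rational function in the two domains $|z_{12}|<|z_{23}|$ and $|z_{23}|<|z_{13}|$, and it follows from the Jacobi identity (P3). Applying it at each internal vertex of $A$ shows that $\langle u,\exp(L(-1)z_A)Y_A(a_{[r]},z_{[r]})\rangle$ equals $e_A(S_r)$, the expansion of $S_r$ in $T_A$. Concretely, I would write $A=B\circ_pC$ with $C$ the subtree at some vertex, and apply the induction hypothesis to $C$ with $u$ replaced by the coordinate projections onto graded pieces of $V$.

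The key point at this stage is that the substitution is a genuinely formal identity in $T_A$, and no convergence is needed for it. This holds because $e_A$ is an algebra homomorphism (\eqref{lem_module_ort}) and is compatible with composition of trees: expanding in $\zeta_e$ for the edge $e$ joining $B$ and $C$ is exactly expanding $S_{r}$ first in the $C$-variables, whose coefficients are rational functions of the $B$-type.

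Convergence then follows. Since $S_r$ is a rational function with poles only on the diagonals, $e_A(S_r)\in T_A^{\conv}$ by \eqref{eq_eA_conv}. Moreover, on $U_A^c$ the sum of this absolutely convergent series is $S_r|_{U_A^c}$: on a polydisc $\Pi\D_{p_e}$ with $\mathfrak p$ admissible, the expansion of $(z_i-z_j)^{-1}$ converges to the function itself, as in \eqref{eq_example_conv}. This gives both (1) and (2).

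I expect the main obstacle to be the formal identity $\langle u,\exp(L(-1)z_A)Y_A\rangle=e_A(S_r)$ for arbitrary nested trees. The difficulty is that the iterated substitution produces infinite sums over intermediate weights, and these must be shown to be well defined in $T_A$ before any convergence is known. I would handle this by grading: on each monomial $\prod\zeta_e^{k_e}$, only finitely many intermediate states contribute, by (P2) and the CFT-type lower bound $V=\bigoplus_{n\ge0}V_n$. This lets me verify the identity coefficientwise in the $\zeta_e$, with each coefficient reducing to the two-vertex case of the associativity identity.
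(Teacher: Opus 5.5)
Your outer framework is sound: a rational $S_r$ from the sequential product, order-independence from locality, and convergence from $e_A(\Or^\alg)\subset T_A^\conv$. The gap is the nested-tree identity you yourself flag as the main obstacle; the justification you give does not establish it.

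\textbf{The gap.} After applying the induction hypothesis to $B$ and $C$, the $A$-series $\langle u,e^{L(-1)z_A}Y_A(a_{[r]},z_{[r]})\rangle$ becomes a sum over intermediate states $e^h_i$ of $e_B(S_{r-s+1}(u,\dots,e^h_i,\dots))$ glued to $e_C(S_s(e_h^i,a_C))$, as in \eqref{eq_comp}. You must show this sum equals $e_A(S_r)$. That $e_A$ is an algebra homomorphism compatible with tree composition only tells you how the rational function $S_r$ re-expands: first in the $C$-variables, then by $e_B$ on the coefficients. It says nothing about why those coefficients are the intermediate-state sums above. That factorization of $S_r$, a multi-point associativity statement, is the actual content of the proposition.

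Your fallback, that each coefficient reduces to two-vertex associativity, does not deliver it. The identity $Y(Y(a,z_{12})b,z_{23})c=Y(a,z_{13})Y(b,z_{23})c$ equates expansions of a rational function whose pole orders along $z_{13}=0$ and $z_{23}=0$ depend on the fixed vector $c$. Inside an $r$-point function, $c$ ranges over infinitely many states produced by the other insertions. Until you know how $S_r$ decomposes along those states, there is nothing to compare the $\zeta$-coefficients of $e_A(S_r)$ with.

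\textbf{How to repair it.} Within your framework, peel off a cherry rather than a general subtree: write $A=A'\circ(ij)$, where the cherry has left leaf $i$ and right leaf $j$.
\begin{enumerate}
\item By your order-independence step, $e_{\mathrm{seq}}(S_r)$ for $\mathrm{seq}=k_1(\cdots(k_{r-2}(ij)))$ is the sequential product. That product equals $\sum_n z_{ij}^{-n-1}e_{A''}\bigl(S_{r-1}(u,a_{k_1},\dots,a_{k_{r-2}},a_i(n)a_j)\bigr)$ with $A''=k_1(\cdots(k_{r-2}\,j))$. The innermost factor $Y(a_i,z_{ij})a_j$ is just a state, so no uniformity problem arises.
\item By your compatibility property, $e_{\mathrm{seq}}(S_r)=\sum_n z_{ij}^{n}e_{A''}(g_n)$, where $g_n$ are the Laurent coefficients of $S_r$ in $z_{ij}$. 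Comparing powers of $\zeta_e$ on the edge above the cherry and using injectivity of $e_{A''}$ on rational functions gives $g_{-n-1}=S_{r-1}(\dots,a_i(n)a_j)$.
\item The $A$-series equals $\sum_n z_{ij}^{-n-1}(A'\text{-series with }a_i(n)a_j\text{ at leaf }j)$. By induction on $r$ this is $\sum_n z_{ij}^{-n-1}e_{A'}(g_{-n-1})$, which is $e_A(S_r)$ by compatibility.
\end{enumerate}

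\textbf{The paper's route.} The paper defers to \cite[Theorem 3.9]{M2}, and the mechanism it indicates avoids factorization altogether. Lemma~\ref{lem_recursive}(2) holds in $T_A$ for every tree, with coefficients $e_A((z_j-z_i)^{n-k})$ of tree-independent rational functions (Remark~\ref{rem_recursive}). Apply it with $b=a_i$ and $n=-1$ to an input list having $\1$ in slot $i$, delete the vacuum, and induct on $r$. This gives $S_A=e_A(S_r)$ for all trees simultaneously. The cost is verifying the recursion for nested trees, a formal computation with (P3); since (P3) is an operator identity, it is automatically uniform in the intermediate states.
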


\begin{rem}
In fact, in \cite[Theorem 3.9]{M2}, 
we prove an analogous consistency statement for full vertex operator algebras
satisfying certain assumptions. In that setting the vertex operator is real
analytic:
\begin{align*}
Y(a,z,\z) = \sum_{r,s\in\R}a(r,s)z^{-r-1}\z^{-s-1}.
\end{align*}
\end{rem}

The following properties of $S_A$ motivate the definition of conformal blocks:
\begin{lem}\label{lem_recursive}
Let $A\in \Tr_r$, $a_1,\dots,a_r \in V$ and $u\in V^\vee$. Then, the following properties hold:
\begin{enumerate}
\item
For any $i \in [r]=\{1,\dots,r\}$,
\begin{align*}
S_A(u,L(-1)_i a_{[r]},z_{[r]})
=\frac{d}{dz_i}S_A(u,a_{[r]},z_{[r]}),
\end{align*}
where $L(-1)_i$ is the action of $L(-1)$ on the $i$-th component;
\item
For any $b\in V$ and $n\in\Z$,
\begin{align*}
&S_A(u, b(n)_i a_{[r]},z_{[r]})\\
&= \sum_{j \neq i} \sum_{k \geq 0} \binom{n}{k} e_A((z_j-z_i)^{n-k}) S_A(u, b(k)_j a_{[r]},z_{[r]})
+ \sum_{k \geq 0}\binom{n}{k}(-z_i)^k S_A(b(n-k)^*u, a_{[r]},z_{[r]}),
\end{align*}
where $b(k)_j$ is as above and $(b(n-k)^* u)(\bullet) = u(b(n-k) \bullet )$.
\end{enumerate}
\end{lem}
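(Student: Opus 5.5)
The plan is to avoid working tree by tree and instead reduce both identities to statements about the single rational function $S_r(u,a_{[r]},z_{[r]})$ from Proposition~\ref{prop_VOA_consistent}. By Definition~\ref{dfn_VOA_consistent}, $S_A=S_r|_{U_A^c}$ for every $A\in\Tr_r$, and $e_A:\Or^\alg\to T_A^\conv$ is an injective $\Dr$-module homomorphism compatible with restriction to $U_A^c$. So it suffices to prove (1) and (2) for $S_r$ as elements of $\Or^\alg$, with $e_A((z_j-z_i)^{n-k})$ replaced by $(z_j-z_i)^{n-k}$. Once an identity holds in $\Or^\alg$, applying $e_A$, or equivalently restricting to $U_A^c$, gives it for every $A$. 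Moreover, the identity in $\Or^\alg$ only needs to be checked on one nonempty open set, by the identity theorem. We are therefore free to choose, for each $i$, a convenient tree.

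For (1), choose a tree $A$ in which leaf $i$ enters as the leftmost, outermost factor. For example, take $A=i(\cdots)$ if $i\neq r_A$, so that $S_A=\langle u,\exp(L(-1)z_A)Y(a_i,z_i-z_A)\,(\text{rest})\rangle$. By (P1) we have $Y(L(-1)a_i,w)=\frac{d}{dw}Y(a_i,w)$. Only the factor $z_i-z_A$ depends on $z_i$ in these coordinates, so (1) follows directly.

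The index $i=r_A$ needs separate treatment. Here $z_i$ appears in $\exp(L(-1)z_i)$ and in every difference $z_j-z_i$. Translation invariance handles this. Using $[L(-1),Y(a,w)]=Y(L(-1)a,w)$ repeatedly, $\sum_j\partial_j S_A$ equals $S_A$ with $L(-1)$ inserted at the root, which is exactly $\partial_{z_A}$ of the exponential factor. Hence $\partial_{r_A}S_A=\sum_j S_A(u,L(-1)_j a)-\sum_{j\neq r_A}\partial_jS_A$. The case already proved then gives (1) for $i=r_A$.

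For (2), again take $A$ with $a_i$ outermost: $S_A=\langle u,e^{L(-1)z_A}Y(a_i,z_i-z_A)Y_{A'}(\ldots)\rangle$, with the remaining factors forming a right-nested tree. We apply the second identity of (P3) to $Y(b(n)a_i,w)$ with $w=z_i-z_A$. This produces terms $b(n-k)Y(a_i,w)(-w)^k$ and $-Y(a_i,w)b(k)(-w)^{n-k}$.

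In the first group, $b(n-k)$ is moved leftward past $e^{L(-1)z_A}$ using $e^{-L(-1)z}b(m)e^{L(-1)z}=\sum_l\binom{m}{l}z^{m-l}b(l)$, and then onto $u$. Combining the binomial sums, the shifts by $-z_A$ and $-w$ add up to $-z_i$, which yields $\sum_k\binom nk(-z_i)^kS(b(n-k)^*u,\ldots)$.

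In the second group, $b(k)$ is commuted rightward through the remaining vertex operators by the first identity of (P3). Each commutator $[b(k),Y(a_j,w_j)]$ contributes a term $\sum_m\binom km Y(b(m)a_j,w_j)w_j^{k-m}$, and $b(k)$ annihilates the vacuum at the end. Re-expanding $(-w)^{n-k}w_j^{k-m}$ via the binomial theorem should assemble into $\sum_m\binom nm(z_j-z_i)^{n-m}$ as a convergent expansion in the region $U_A^c$.

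I expect this last resummation to be the main obstacle, since the combinatorics of the binomial coefficients have to match exactly. To keep it manageable, I would first treat $r=2$ and $r=3$ explicitly, where the identity reduces to the usual Borcherds/Jacobi identity. I would then argue by induction on $r$, peeling off one vertex operator at a time. Uniqueness of analytic continuation, together with the fact that both sides are elements of $\Or^\alg$, lets the identity proved on this specific region extend to all trees.
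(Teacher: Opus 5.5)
Your core computation for (2) is the paper's: put $a_i$ outermost, expand $Y(b(n)a_i,w)$ by (P3), move $b(n-k)$ onto $u$, and commute $b(k)$ to the right. The genuine difference is how you reach an arbitrary tree. You use the global rational function $S_r$ of Proposition~\ref{prop_VOA_consistent} and the identity theorem to transfer the identity from one convenient tree to every $A$. The paper instead argues from (P3) alone, tree by tree, and leaves the general case to the reader.

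Your reduction is legitimate here. It spares you the nested re-expansions that arise when $a_i$ sits deep in the tree (e.g.\ $i=1$ in $(12)3$, where (P3) must be applied at two vertices). The price is twofold:
\begin{enumerate}
\item An elementary lemma now rests on the consistency theorem. This reverses the paper's order, since the corollary after the lemma derives part of consistency from it.
\item More importantly, the argument does not transfer to Lemma~\ref{lem_module_recursive}. Composites of intertwining operators are not restrictions of one single-valued function on $X_r(\mathbb C)$, so there is no $S_r$ to pass through. This is why Remark~\ref{rem_recursive} stresses that only (P3) is used.
\end{enumerate}
Also, under your reduction the separate case $i=r_A$ in (1) is superfluous, since for $A=i(\cdots)$ one always has $i\neq r_A$.

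Three points in your execution need fixing.

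First, in your setup there is no vacuum at the end of the chain. The right-nested tree ends with $a_{r_A}$; the factor $e^{L(-1)z_A}$ plays the role of $Y(a_{r_A},z_A)\vac$. The action $b(k)a_{r_A}$ gives exactly the $j=r_A$ term, with $(-w)^{n-k}=(z_{r_A}-z_i)^{n-k}$ already in final form. As you wrote it, this term would be lost.

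Second, the resummation is not an obstacle and needs no induction on $r$. A commutator with a product of vertex operators is a sum of single commutators. For each factor, with $w_j=z_j-z_A$ for every $j$ (so $w=w_i$),
\[
\sum_{k\geq 0}\binom{n}{k}(-w_i)^{n-k}\,[b(k),Y(a_j,w_j)]=\sum_{l\geq 0}\binom{n}{l}(w_j-w_i)^{n-l}\,Y(b(l)a_j,w_j),
\]
with $(w_j-w_i)^{n-l}$ expanded in nonnegative powers of $w_j/w_i$. This uses only $\binom{n}{k}\binom{k}{l}=\binom{n}{l}\binom{n-l}{k-l}$, and for your tree this expansion is exactly $e_A((z_j-z_i)^{n-l})$. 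A smaller point: to move $b(m)$ leftward past $e^{L(-1)z_A}$ you need
\[
e^{L(-1)z}b(m)e^{-L(-1)z}=\sum_{p\geq 0}\binom{m}{p}(-z)^p\,b(m-p),
\]
valid for all $m\in\mathbb Z$. Your formula is the opposite conjugation and is indexed only for $m\geq 0$, although the net shift $-z_i$ you state is right.

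Third, keep the minus sign you correctly wrote for the second group of (P3). It produces
\[
-\sum_{j\neq i}\sum_{l}\binom{n}{l}\,e_A((z_j-z_i)^{n-l})\,S_A(u,b(l)_ja_{[r]},z_{[r]}),
\]
not $+$. The minus is correct: it is what the generators \eqref{eq_ker1} encode. The $+$ in the statement is a misprint. Take $r=2$, the Heisenberg VOA with $b=a_1=a_2=h$ (so $h(1)h=\vac$), $n=1$ and $u=h^*$. The left side is $1$, the $u$-term is $2$ and the $j=2$ sum is $1$, so only $1=2-1$ holds. The paper's own proof reaches $+$ only by dropping this sign in its first display, which also has $(-z_1)^{n-k}$ where $(-z_1)^k$ belongs. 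Carried out, your argument proves the lemma with $-\sum_{j\neq i}$, and that is the version you should state.
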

\begin{proof}
(1) clearly follows from (P1). 
We show (2) for a special case. We leave the general case to the reader (see also \cite[Section 1.3]{M1}).
By (P3), we have:
\begin{align*}
&\langle u, Y(b(n)a_1,z_1) Y(a_2,z_2)Y(a_3,z_3)\vac \rangle -
\sum_{k \geq 0} \binom{n}{k}(-z_1)^{n-k} \langle u, b(n-k) Y(a_1,z_1) Y(a_2,z_2)Y(a_3,z_3)\vac\rangle\\
&=\langle u, Y(a_1,z_1) \left(\sum_{k \geq 0}\binom{n}{k} b(k) (-z_1)^{n-k}\right) Y(a_2,z_2)Y(a_3,z_3)\vac \rangle\\
&=\langle u, Y(a_1,z_1) \left[\left(\sum_{k \geq 0}\binom{n}{k} b(k) (-z_1)^{n-k}\right), Y(a_2,z_2)\right]Y(a_3,z_3)\vac \rangle\\
&+
\langle u, Y(a_1,z_1) Y(a_2,z_2)\left[\left(\sum_{k \geq 0}\binom{n}{k} b(k) (-z_1)^{n-k}\right) ,Y(a_3,z_3)\right]\vac \rangle.
\end{align*}
Since by (P3)
\begin{align*}
\left[\left(\sum_{k \geq 0}\binom{n}{k} b(k) (-z)^{n-k}\right), Y(a,w)\right]
&= \sum_{k \geq 0}\sum_{l \geq 0}\binom{n}{k}\binom{k}{l} (-z)^{n-k}w^{k-l} Y(b(l)a,w) \\
&= \sum_{l \geq 0}\binom{n}{l} (w-z)^{n-l}\Bigl|_{|z|>|w|} Y(b(l)a,w),
\end{align*}
the assertion follows.
\end{proof}

\begin{rem}\label{rem_recursive}
The following points are important for Lemma \ref{lem_recursive}:
\begin{itemize}
\item
The coefficients of the recursion relation in (2) 
do not depend on $A \in \Tr_r$ when they are analytically continued;
\item
Only (P3) is used in the proof of Lemma \ref{lem_recursive};
\item
(1) and (2) define a system of differential equations.
\end{itemize}
\end{rem}
The following corollary is a part of the consistency which is easily derived from Lemma \ref{lem_recursive}:
\begin{cor}
Let $A\in \Tr_r$, $a_1,\dots,a_r \in V$ and $u\in V^\vee$. Then, 
$\langle u, Y(a_{1},z_{1})Y(a_{2},{z_2}) \cdots Y(a_{r},{z_r}) \vac \rangle$ is absolutely convergent  in $U_{1(2(\dots(r-1 r)\dots)}=
\{|z_1|>|z_2|>\cdots >|z_r|\}\subset X_r(\C)$.
Moreover, there is a polynomial $f(z_1,\dots,z_r)\in \C[z_i, (z_i-z_j)^\pm]$ such that:
\begin{align*}
\langle u, Y(a_{\si1},z_{\si1})Y(a_{\si 2},{z_{\si 2}}) \cdots Y(a_{\si r},{z_{\si r}}) \vac \rangle
 =e_{\si 1(\si 2(\cdots (\si (r-1) \si r)))}(f(z_1,\dots,z_r)) 
\end{align*}
for any $\si \in S_r$, where $S_r$ is the permutation group.
\end{cor}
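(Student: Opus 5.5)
The plan is to argue by induction on $r$ using Lemma \ref{lem_recursive}(2). We use it to lower the weight of the inserted vectors until only vacua remain. This is the standard argument for rationality of correlation functions, and Remark \ref{rem_recursive} says the recursion has the shape we need: its coefficients $e_A((z_j-z_i)^{n-k})$ are expansions of a single rational function, independent of the tree once continued. Convergence on $\{|z_1|>\cdots>|z_r|\}$ will come from Proposition \ref{prop_VOA_consistent} applied to the tree $1(2(\cdots(r-1\,r)\cdots))$. However, the corollary is meant to be derived from the lemma, so I would check convergence directly: the recursion expresses each correlator as a finite combination of expanded rational functions.

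\emph{Reduction step.} Fix $u\in V_N^*$ (it suffices to treat homogeneous $u$) and homogeneous $a_i$. Consider the quantity $\langle u, Y(a_1,z_1)\cdots Y(a_r,z_r)\vac\rangle$. If $a_r$ (the innermost insertion) is a multiple of $\vac$, then $Y(\vac,z)=\id$, and we reduce to $r-1$ points. If $a_1=\vac$, the same holds.

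Otherwise, use that $V$ is spanned by the vectors $b(-m)c$ with $m\ge 1$ and $\mathrm{wt}\, c<\mathrm{wt}\, a_i$. Indeed, $V$ is of CFT type and $V_{>0}$ is spanned by such vectors; one can take $b=a$, $c=\vac$, $m=1$, though that alone gives no decrease. So I would instead run a double induction on the total weight $\sum_i\mathrm{wt}\,a_i$ together with $r$, in the following form. Write $a_i=b(-1)\vac$ or, more generally, $a_i=b(n)c$ and apply Lemma \ref{lem_recursive}(2) at slot $i$. The sum over $j\neq i$ gives rational coefficients $(z_j-z_i)^{n-k}$ times correlators in which $b(k)$, with $k\ge0$, acts on $a_j$. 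These have strictly smaller total weight, because $b(k)$ lowers weight by $k+1-\mathrm{wt}\, b$. The remaining term $(-z_i)^k\langle b(n-k)^*u,\ldots\rangle$ is a finite sum, since $u$ has bounded weight. It has the original $a$'s except that slot $i$ is replaced by $c$.

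\emph{Main obstacle.} The hard part is choosing a well-founded order so that every term strictly decreases. The cleanest choice I would try first is to apply the recursion at slot $i=r$ with $a_r=b(-1)c$ and $c$ of lower weight; by the PBW-type spanning of $V$ this presentation exists. Then the $j\neq r$ terms decrease the total weight, and the dual terms keep $\sum\mathrm{wt}$ of the $a$'s but lower the weight of slot $r$ to $\mathrm{wt}\, c$. Ordering by (total weight, weight of $a_r$) lexicographically, the induction terminates when $a_r=\vac$, which lands in the $r-1$ case. At each step, every term is either an expansion $e_A$ of a rational function in $\C[z_i,(z_i-z_j)^{\pm}]$ or a polynomial in the $z_i$, multiplied by a correlator that by induction equals $e_A(f')$. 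Since $e_A$ is an algebra homomorphism \eqref{lem_module_ort}, the product is again $e_A$ of a rational function. The base case $r=1$ is $\langle u,Y(a,z)\vac\rangle=\langle u,e^{zL(-1)}a\rangle$, a polynomial.

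\emph{Permutation independence.} The recursion with coefficients $(z_j-z_i)^{n-k}$ and $(-z_i)^k$ is the same identity of rational functions for every ordering $\sigma$; only the expansion $e_{\sigma}$ differs. Running the same induction simultaneously for all $\sigma\in S_r$ therefore produces one rational function $f$, defined by the recursion on the level of $\Or^\alg$, with $e_{\sigma1(\sigma2(\cdots))}(f)$ equal to each ordered correlator. Here I must check that $f$ is well defined, i.e. independent of the chosen presentation $a_r=b(-1)c$. This follows because $f$ is determined by any single ordering: $e_A$ is injective on $\Or^\alg$. Absolute convergence on $U_{1(2(\cdots))}$ then follows because $e_A(f)$ is the Laurent expansion of a rational function whose poles lie only on $z_i=z_j$, and this expansion converges absolutely on $|z_1|>\cdots>|z_r|$.
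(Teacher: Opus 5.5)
Your argument is correct, but it differs from the paper's in two places.

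\textbf{The reduction.} The paper applies Lemma \ref{lem_recursive}(2) with $b=a_1$, $n=-1$ to $(\vac,a_2,\dots,a_r)$, i.e.\ it writes $a_1=a_1(-1)\vac$. Every term on the right then has $\vac$ in slot $1$, so it is an $(r-1)$-point function, and a single induction on $r$ gives rationality and convergence. You dismissed exactly this choice ($b=a$, $c=\vac$) as giving ``no decrease''. In fact it gives the strongest decrease: the slot becomes $\vac$, and $Y(\vac,z)=\id$ removes a point outright. Your double induction with $a_r=b(-1)c$ is valid and contains this case. However, the PBW-type spanning and the lexicographic order are unnecessary. Also, contrary to what you wrote, the dual terms do not keep the total weight: they lower it by $\mathrm{wt}\,b>0$. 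So total weight alone already drops in every term.

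\textbf{The permutation statement.} The paper invokes locality, $(z_i-z_j)^{N_{ij}}[Y(a_i,z_i),Y(a_j,z_j)]=0$, to conclude that the rational functions obtained for different orderings coincide. You instead use Remark \ref{rem_recursive}: the recursion has the same rational coefficients for every tree $\si 1(\si 2(\cdots))$. Running the induction for all $\si$ simultaneously therefore produces one $f$ whose expansions are all the ordered correlators. This is sound and somewhat more self-contained. It builds $f$ directly and bypasses the step, left implicit in the paper, of passing from the formal locality identity to equality of the $f_\si$. Locality is in any case already encoded in (P3), on which the lemma rests.

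Both routes need Lemma \ref{lem_recursive}(2) for every ordering. Your presentation-independence check is harmless but not needed, since the simultaneous induction already gives a single $f$ valid for all $\si$.
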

\begin{proof}
Applying Lemma \ref{lem_recursive} to $a_1=a_1(-1)\vac$, we can see that\\
$\langle u, Y(a_{\si1},z_{\si1})Y(a_{\si 2},{z_{\si 2}}) \cdots Y(a_{\si r},{z_{\si r}}) \vac \rangle$ is
a finite sum of polynomials in $\C[z_i, (z_i-z_j)^\pm]$ expanded by $e_{\si 1(\si 2(\cdots (\si (r-1) \si r)))}$.
Hence, it is absolutely convergent in ${U}_{\si 1(\si 2(\cdots (\si (r-1) \si r)))}^c$.
By the locality of a vertex algebra (see for example \cite{LL}), there is an integer $N_{ij} \in \Z_{\geq 0}$ such that:
\begin{align*}
(z_i-z_j)^{N_{ij}}[Y(a_i,z_i),Y(a_j,z_j)]=0.
\end{align*}
Hence, the assertion holds.
\end{proof}
In the next section, we describe the recursion relations among intertwining operators, which is directly related to the definition of conformal blocks.

\section{Modules and intertwining operators}\label{sec_module}
Let \(V\) be a vertex operator algebra. Throughout this note, by a
\(V\)-module we mean a \(V\)-module \(M\) satisfying the following conditions:
\begin{enumerate}
\item the action of \(L(0)\) on \(M\) is locally finite;
\item for each \(h\in\mathbb C\), the generalized \(L(0)\)-eigenspace
\[
  M_h=\{m\in M\mid (L(0)-h)^N m=0 \text{ for } N\gg 0\}
\]
is finite-dimensional;
\item there exist finitely many complex numbers
\(\Delta_1,\ldots,\Delta_N\in\mathbb C\) such that
\[
  M=
  \bigoplus_{i=1}^N
  \bigoplus_{n\geq 0}
  M_{\Delta_i+n}.
\]
\end{enumerate}
We denote by $\Vmodu$ the category of such \(V\)-modules.
Let $M$ be a $V$-module. For any $n \in \Z_{>0}$, set 
\begin{align*}
C_n(M)= \{a(-n)m\mid m\in M \text{ and }a \in\bigoplus_{k\geq 1}V_k \}.
\end{align*}

\begin{dfn}
A $V$-module $M$ is called {\it $C_n$-cofinite} if $M/C_n(M)$ is a finite-dimensional vector space \cite{Zh}.
\end{dfn}

Since $(L(-1)a)(-n)=na(-n-1)$ for any $a \in V$ and $n\in \Z_{>0}$,
$C_{n+1}(M) \subset C_{n}(M)$. Hence, if $M$ is $C_{n+1}$-cofinite,
then $M$ is $C_{n}$-cofinite.
Note that any vertex operator algebra of CFT type is $C_1$-cofinite since $a=a(-1)\vac$.
Denote by $\Vmodf$ the full subcategory of $\Vmodu$ consisting of all $C_1$-cofinite V-modules $M$ such that the contragredient module $M^\vee$ is finitely generated.

We will recall the definition of a logarithmic intertwining operator of a vertex operator algebra from \cite{Mil1,Mil2}.
Let $M_0,M_1,M_2$ be $V$-modules.
\begin{dfn}\label{def_int}
{\it A logarithmic intertwining operator} of type $\binom{M_0}{M_1 M_2}$
is a linear map
$$\Y_1(\bullet,z):M_1 \rightarrow \text{Hom} (M_2,M_0)[[z^\C]][\log z],
\; m \mapsto \Y_1(m,z)=\sum_{k \geq 0} \sum_{r \in \C} m(r;k) z^{-r-1}(\log z)^k$$
such that:
\begin{enumerate}
\item[I1)]
For any $m \in M_1$ and $m' \in M_2$,
$\Y_1(m,z)m' \in M_0[[z]][z^\C,\log z]$.
\item[I2)]
$[L(-1),\Y_1(m,z)]=\frac{d}{dz}\Y_1(m,z)$ for any $m \in M_1$.
\item[I3)]
For any $m \in M_1$, $a \in V$ and $n\in \Z$,
\begin{align*}
[a(n), \Y_1(m,z)]&=\sum_{k\geq 0} \binom{n}{k} \Y_1(a(k)m,z)z^{n-k}\\
\Y_1(a(n)m,z)&=\sum_{k\geq 0}\binom{n}{k}
\left( 
a(n-k)\Y_1(m,z)(-z)^k
- \Y_1(m,z)a(k)(-z)^{n-k}
\right).
\end{align*}
\end{enumerate}
\end{dfn}

The space of all logarithmic intertwining operators of type $\binom{M_0}{M_1M_2}$ forms a vector space,
which is denoted by $I_{\log} \binom{M_0}{M_1M_2}$.
If $\Y_1(\bullet,z) \in I_{\log} \binom{M_0}{M_1M_2}$ does not contain any logarithmic term,
i.e., $\Y_1(m,z) \in \mathrm{Hom}(M_2,M_0)[[z]][z^\C]$ for any $m\in M_1$,
then
$\Y_1(\bullet,z)$ is called {\it an intertwining operator} of type $\binom{M_0}{M_1M_2}$ \cite{FHL}.
Denote by $I\binom{M_0}{M_1M_2}$
the space of all intertwining operators of type $\binom{M_0}{M_1M_2}$.
By (I2) and (I3), (P1), (P2) and (P3) hold for logarithmic intertwining operators.

Let $r\in \Z_{>0}$ and $\{M_i\}_{i =0,1,\dots,r}$ $V$-modules.
Set 
\begin{align*}
\Mr=M_0^\vee \otimes M_1 \otimes \cdots \otimes M_r.
\end{align*}
For each $a \in V$, $i =1,\dots,r $ and $n\in \Z$,
define a linear map $a(n)_i:\Mr\rightarrow \Mr$ by
the action of $a(n):M_i \rightarrow M_i$ on the $i$-th component.
On the $0$-th component, 
define $a(n)_0^*: \Mr \rightarrow \Mr$ by $a(n)^*:M_0^\vee \rightarrow M_0^\vee$,
where 
\begin{align*}
(a(n)^*u)(\bullet)=u(a(n)\bullet) \text{ for } u\in M_0^\vee.
\end{align*}

Let $\Y_1 \in I\binom{N}{M_1M_2}$ and $\Y_2 \in I\binom{M_0}{LM_3}$. Then, $\Y_1$ and $\Y_2$ are formally composable of shape $(12)3$ if $L=N$, that is,
\begin{align*}
\langle u, \exp(L(-1)z_3)
\Y_2(\Y_1(m_1,z_{12})m_2,z_{23})m_3 \rangle \in T_{(12)3},
\end{align*}
for any $u\in M_0^\vee$ and $m_i\in M_i$. The module $L=N$ is called an intermediate module.

In general, for \(A\in\Tr_r\), we say that a sequence
\(\{\mathcal Y_i\}_{i=1}^{r-1}\) of logarithmic intertwining operators is
formally \(A\)-composable with out-state \(M_0\) and in-states
\(M_1,\ldots,M_r\) if the intermediate modules match according to the tree
\(A\), and the resulting composition takes values in \(M_0\).
By (P2), one can easily see that
\begin{align*}
C_A(u,a_{[r]},z_{[r]}) = \langle u,\exp( z_A L(-1)) \Y_A(m_1,\dots,m_r) \rangle \in T_A
\end{align*}
for any $m_i \in M_i$ and $u\in M_0^\vee$.
Note that $C_A$ here depends on the logarithmic intertwining operators.
%
Then, by Remark \ref{rem_recursive}, similarly to Lemma \ref{lem_recursive}, we have:
\begin{lem}\label{lem_module_recursive}
Let $A\in \Tr_r$, $m_i \in M_i$ ($i=1,\dots,r$), $u\in M_0^\vee$ and $\Y_i \in \binom{?}{M_i,?}$ be formally $A$-composable logarithmic intertwining operators.
Then, the linear map
\begin{align*}
C_A: M_0^\vee \otimes M_1 \otimes \cdots \otimes M_r \rightarrow T_A
\end{align*}
satisfies
\begin{enumerate}
\item
For any $i \in [r]$,
\begin{align*}
C_A(u,L(-1)_i m_{[r]},z_{[r]})
=\frac{d}{dz_i}C_A(u,m_{[r]},z_{[r]}).
\end{align*}
\item
For any $b\in V$ and $n\in\Z$,
\begin{align*}
&C_A(u, b(n)_i m_{[r]},z_{[r]})\\
&= \sum_{j \neq i} \sum_{k \geq 0} \binom{n}{k} e_A((z_j-z_i)^{n-k}) C_A(u, b(k)_j m_{[r]},z_{[r]})
+ \sum_{k \geq 0}\binom{n}{k}(-z_i)^k C_A(b(n-k)^*u, m_{[r]},z_{[r]}).
\end{align*}
\end{enumerate}
\end{lem}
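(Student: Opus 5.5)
The plan is to mirror the proof of Lemma \ref{lem_recursive}, replacing (P1)--(P3) for $Y$ by (I2)--(I3) for the logarithmic intertwining operators $\Y_i$; these have the same form (Remark \ref{rem_recursive}). I will proceed by induction on $r$ along the tree $A$, writing $\Y_A$ as the outermost intertwining operator $\Y$ at the root $t_A$ applied to the two subtree compositions $\Y_{A_L}$ and $\Y_{A_R}$. All identities are proved in $T_A$, where the binomial expansions $e_A((z_j-z_i)^{n-k})$ make sense.

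\emph{Part (1).} Fix a leaf $i$ and follow the path from $i$ up to $t_A$. First I move the factor $\exp(z_AL(-1))$ to the position of the rightmost leaf $r_A$, so that $C_A$ is a function of the variables $z_v=z_{L(v)}-z_{R(v)}$ and $z_A$. Using (I2) at each vertex $v$, $\Y_v(L(-1)m,z_v)=\frac{d}{dz_v}\Y_v(m,z_v)$. For the right factor I use $\Y(m,z)L(-1)m'=L(-1)\Y(m,z)m'-\frac{d}{dz}\Y(m,z)m'$. Then the chain rule in $\frac{d}{dz_i}$ applied to the functions $z_v$ and $z_A$ reproduces exactly these terms. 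For $i=r_A$, the $L(-1)$ that reaches the top is absorbed by $\frac{d}{dz_A}\exp(z_AL(-1))$. This is a formal check in $T_A$.

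\emph{Part (2).} This is the main step. Start from the second identity in (I3) at the vertex where leaf $i$ attaches. It expresses $\Y(b(n)m_i,w)$ as $b(n-k)\Y(m_i,w)(-w)^k-\Y(m_i,w)b(k)(-w)^{n-k}$. I then commute the resulting modes $b(\cdot)$ outward through the remaining intertwining operators using the first identity in (I3), exactly as in the displayed computation in the proof of Lemma \ref{lem_recursive}. At each step, the commutator of $\sum_k\binom{n}{k}b(k)(-z)^{n-k}$ with $\Y_j(m_j,w)$ resums into
\[
\sum_{l\geq 0}\binom{n}{l}(w-z)^{n-l}\Y_j(b(l)m_j,w),
\]
expanded in the appropriate region. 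When the mode is pushed into a subtree in the parenthesized position (for example inside $\Y_2(\Y_1(m_1,z_{12})m_2,z_{23})$), I instead apply the second identity of (I3) in reverse to that inner composite. Modes escaping to the far left are dualized as $b(n-k)^*u$. Modes hitting the far right act on the last module $m_r$, but also pass through $\exp(z_AL(-1))$ via $e^{zL(-1)}b(k)e^{-zL(-1)}=\sum_j\binom{k}{j}z^{k-j}b(j)$, which converts $(-z_i)$-shifts consistently.

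\emph{Main obstacle.} The hardest part will be the bookkeeping showing that each resummed coefficient is exactly $e_A((z_j-z_i)^{n-k})$, i.e.\ that the formal binomial expansions produced at various vertices agree with the $A$-expansion of the rational function. I will handle this by the inductive gluing: assume the identity for $A_L$ and $A_R$ (with $b(n)$ acting on the intermediate module at the root replaced using the first identity of (I3)). Then I use that $e_A$ is a ring homomorphism compatible with the substitutions $z_v=z_{L(v)}-z_{R(v)}$, so that the expansions combine as in \eqref{eq_example_conv}. This independence from $A$ is the content of Remark \ref{rem_recursive}.
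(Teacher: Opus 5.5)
Your plan follows the paper. The paper justifies this lemma only by saying that the proof of Lemma \ref{lem_recursive} (itself carried out in one special case) goes through with (I2)--(I3) in place of (P1)--(P3). Your root-splitting induction is a sound way to do the general bookkeeping: it recombines coefficients by $\sum_{l}\binom{n}{l}\binom{l}{p}w^{n-l}y^{l-p}=\binom{n}{p}(w+y)^{n-p}$, with $y/w$ in the ideal generated by the $\zeta_e$.

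The final matching cannot succeed as you state it, however, because of a sign. The minus sign in your first step, $-\Y(m_i,w)b(k)(-w)^{n-k}$, survives every resummation, so what your argument actually proves is
\begin{align*}
C_A(u,b(n)_im_{[r]},z_{[r]})
&=\sum_{k\geq0}\binom{n}{k}(-z_i)^kC_A(b(n-k)^*u,m_{[r]},z_{[r]})\\
&\quad-\sum_{j\neq i}\sum_{k\geq0}\binom{n}{k}e_A\bigl((z_j-z_i)^{n-k}\bigr)C_A(u,b(k)_jm_{[r]},z_{[r]}).
\end{align*}
This is exactly the relation encoded by the generators \eqref{eq_ker1}. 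The ``$+$'' in the statement is a misprint, as is the dropped minus sign in the displayed computation of Lemma \ref{lem_recursive} that you propose to copy. To see this, take $A=(12)$, $i=1$, $m_1=\vac$ and $n=-1$. The left side is then $\langle u,e^{z_2L(-1)}Y(b,z_{12})m_2\rangle$, whose annihilation part $\sum_{k\geq0}z_{12}^{-k-1}C_{12}(u,\vac,b(k)m_2)$ carries a plus sign, whereas the statement's coefficient is $\binom{-1}{k}(z_2-z_1)^{-1-k}=-z_{12}^{-k-1}$. Concretely, for the Heisenberg VOA with $b=m_2=h$ and $u$ dual to $\vac$, the two sides are $z_{12}^{-2}$ and $-z_{12}^{-2}$. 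So state and prove the corrected identity.

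Some smaller repairs are also needed.
\begin{itemize}
\item The conjugation formula is $e^{zL(-1)}b(k)e^{-zL(-1)}=\sum_{j\geq0}\binom{k}{j}(-z)^jb(k-j)$. It is used for modes that leave $\Y_A$ on the left and pass $\exp(z_AL(-1))$ to become $b(\cdot)^*u$, not for modes reaching $m_{r_A}$. There, the shifts collected along the path from leaf $i$ add up to $-(z_i-z_{r_A})$, and the conjugation supplies the remaining $-z_A$.
\item A mode entering a subtree composite $\Phi_{A'}$ from outside is distributed over its leaves by the commutator formula for $\Phi_{A'}$, i.e.\ your induction hypothesis at the rightmost leaf of $A'$. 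It is not distributed by the second identity of (I3) in reverse. Accordingly, run the induction for the operator-valued composites, with the rightmost leaf placed at $0$ and before pairing with any dual vector, since inside $C_A$ the subtree composites are not paired with $u$.
\item At the root, a mode coming out of the left subtree sits in the first slot of $\Y$ and is handled by the second identity of (I3). A mode coming out of the right subtree is handled by the first identity.
\item Likewise, if leaf $i$ is a right child, you must start from the commutator formula rather than the second identity.
\end{itemize}
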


As noted above, $C_A$ depends on $A$-composable logarithmic intertwining operators, and there can be many linear maps that satisfy Lemma \ref{lem_module_recursive}.
However, they all satisfy (1) and (2). Therefore, a universal logarithmic intertwining operator can be defined by (1) and (2).
This is the conformal block introduced in the next section.

%

\section{Conformal blocks}
\label{sec_chiral_CB}
In this section, we will review the definition of conformal blocks based on \cite{M1}.
A conformal block is a sheaf of holomorphic solutions of a $\Dr$-module,
defined for a sequence of $V$-modules $(M_i)_{i =0,1,\dots,r}$ (see \cite{FB,NT}).
For the definition and general properties of $\Dr$-modules, we refer the reader to \cite{HTT}.

Let $\Or^\alg \otimes \Mr$ be an $\Or^\alg$-module,
where the $\Or^\alg$-module structure is defined by the multiplication on the left component.
Define a left $\Dr$-module structure on $\Or^\alg \otimes \Mr$
by
\begin{align}
\pa_i \cdot (f\otimes \mr) = (\pa_i f) \otimes \mr + f\otimes L(-1)_i \mr
\label{eq_D_def}
\end{align}
for $i=1,\dots,r$, $f \in \Or^\alg$, $\mr \in \Mr$.


Let $N_{\Mr}$ be the $\Dr$-submodule of 
$\Or^\alg \otimes \Mr$ generated by the following elements:
\begin{align}
1 \otimes a(n)_i \mr - &\sum_{k \geq 0}\binom{n}{k} (-z_i)^k \otimes a(-k+n)_0^* \mr
+ \sum_{1\leq s\leq r, s \neq i} \sum_{k \geq 0} \binom{n}{k}(z_s-z_i)^{n-k}\otimes a(k)_s \mr,
\label{eq_ker1}
\end{align}
for all $\mr \in \Mr$, $a \in V$ and $i \in \{1,\dots,r\}$ and $n \in \Z$ (see Lemma \ref{lem_module_recursive}).
Set 
$$
D_{\Mr} = (\Or^\alg \otimes \Mr) / N_{\Mr},
$$
which is a $\Dr$-module.
The following lemma is clear:
\begin{lem}
\label{lem_D_functor}
The assignment of $\Mr$ to $D_{\Mr}$ determines the following $\C$-linear functor:
\begin{align*}
D_{\bullet}:{\Vmodu}^\op \times {\Vmodu}^r &\rightarrow \underline{\Dr \text{-mod}},\\
(M_0,M_1,\dots,M_r) \quad &\mapsto \quad D_{\Mr}.
\end{align*}
\end{lem}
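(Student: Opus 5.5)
To prove Lemma \ref{lem_D_functor}, I will first say how a morphism acts, then check that the action is compatible with the relations, and finally check functoriality and linearity. Take a morphism $(f_0,f_1,\dots,f_r)$ in ${\Vmodu}^\op\times{\Vmodu}^r$ from $(M_0,M_1,\dots,M_r)$ to $(M_0',M_1',\dots,M_r')$. Here $f_0:M_0'\to M_0$ and $f_i:M_i\to M_i'$ are $V$-module homomorphisms. The contragredient $f_0^\vee:M_0^\vee\to M_0'^\vee$, $u\mapsto u\circ f_0$, is well defined. Indeed $f_0$ commutes with $L(0)$, so it preserves generalized eigenspaces, and $u\circ f_0$ is again a finite sum of graded pieces.

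The linear map
\[
F=\id_{\Or^\alg}\otimes f_0^\vee\otimes f_1\otimes\cdots\otimes f_r:\Or^\alg\otimes\Mr\to\Or^\alg\otimes M'_{[0;r]}
\]
is $\Or^\alg$-linear by construction. It commutes with $\pa_i$ by \eqref{eq_D_def}, because $f_i$ commutes with $L(-1)$. Hence $F$ is a $\Dr$-module homomorphism.

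The main step is to check that $F(N_{\Mr})\subset N_{M'_{[0;r]}}$. Since $F$ is a $\Dr$-homomorphism, it suffices to check this on the generators \eqref{eq_ker1}. For this I use two intertwining facts. First, $f_s\, a(k)=a(k)\,f_s$ for $s\ge1$. Second, $f_0^\vee\circ a(m)^*=a(m)^*\circ f_0^\vee$, which follows from $(a(m)^*u)(f_0x)=u(a(m)f_0x)=u(f_0 a(m)x)$. Using these, $F$ applied to the generator \eqref{eq_ker1} for $\mr$ equals the generator \eqref{eq_ker1} for $F(\mr)$ in $M'_{[0;r]}$. Here I extend linearly, since $F(\mr)$ is a sum of pure tensors and the generators are linear in $\mr$. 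Thus $F$ descends to a $\Dr$-homomorphism $D_{\Mr}\to D_{M'_{[0;r]}}$.

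It remains to check functoriality and linearity. Identities go to identities. Composition is preserved because $(g_0f_0)^\vee=f_0^\vee g_0^\vee$, and this matches composition in the opposite category in the $0$-th slot. Finally, $\C$-linearity in each variable separately holds because $F$ is multilinear in $(f_0,\dots,f_r)$. The only step that needs care is the compatibility with the relations \eqref{eq_ker1}, and that is exactly the intertwining property for the dual action $a(m)^*$ on $M_0^\vee$.
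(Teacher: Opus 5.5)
Your proposal is correct and is the routine verification the paper has in mind: the paper calls the lemma clear and gives no written proof. Your key observation is that $\id\otimes f_0^\vee\otimes f_1\otimes\cdots\otimes f_r$ is $\Dr$-linear and sends each generator \eqref{eq_ker1} to the corresponding generator, because $f_0^\vee$ intertwines the dual modes $a(m)^*$ and each $f_s$ intertwines $a(k)$. That is exactly what is needed for the map to descend to $D_{\Mr}$ and give a functor.
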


Let $\Or^\an$ be the sheaf of holomorphic functions on $\Xr$.
Set
\begin{align*}
\CB_{\Mr} = \mathrm{Hom}_{\Dr}(D_{\Mr},\Or^\an),
\end{align*}
the holomorphic solution sheaf, which is called a {\bf  conformal block}.

\begin{rem}
\label{rem_assign_hol}
Let $U\subset \Xr$ be an open subset and $C\in \CB_\Mr(U)$.
Let $\Mr\rightarrow \Or^\alg \otimes \Mr$ be the embedding defined by
$\mr\mapsto 1\otimes \mr$ for $\mr\in \Mr$.
Then, by $\Mr\rightarrow \Or^\alg \otimes \Mr \rightarrow D_{\Mr}$,
$C$ can be regarded as a linear map $\Mr \rightarrow \Or^\an(U)$,
which assigns to each vector in $\Mr$ a holomorphic function on $U$.
\end{rem}

Let $M_0,M_1,\dots,M_r\in \Vmodu$.
Assume that $M_1,\dots,M_r$ are $C_1$-cofinite and $M_0^\vee$ is finitely generated.
By refining the argument in \cite{H2} (which is for the $r =3$ case), we show that $D_{\Mr}$ is a finitely generated $\Or^\alg$-module \cite[Proposition 4.6]{M1}. 
Then, by \cite[Theorem 1.4.10]{HTT}, 
$D_{\Mr}$ is a locally free $\Or^\alg$-module of finite rank as a $\Dr$-module.
Hence, we have:
\begin{prop}
\label{prop_coherence}
The holomorphic solution sheaf $\mathrm{Hom}(D_{\Mr},\Or^\an)$ is a locally constant sheaf of finite rank on $\Xr$.
\end{prop}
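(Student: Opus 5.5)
The plan is to deduce the statement from the finite generation of $D_{\Mr}$ over $\Or^\alg$, together with the standard fact that a coherent $D$-module on a smooth variety is an integrable connection.

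\textbf{Step 1: $D_{\Mr}$ is an integrable connection.} As recalled just before the statement, $D_{\Mr}$ is a finitely generated $\Or^\alg$-module by \cite[Proposition 4.6]{M1}. Since $\Xr$ is a smooth affine variety, \cite[Theorem 1.4.10]{HTT} applies: a $\Dr$-module that is coherent over $\Or^\alg$ is locally free of finite rank, say $n$.

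The action of $\pa_i$ given by \eqref{eq_D_def} then defines a flat connection $\nabla$ on the associated vector bundle $E$ on $\Xr$. Flatness is automatic because the $\pa_i$ commute in $\Dr$.

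\textbf{Step 2: solutions are flat sections of the dual.} Analytify: set $E^\an = \Or^\an \otimes_{\Or^\alg} D_{\Mr}$. For an open set $U \subset \Xr$, a $\Dr$-linear map $D_{\Mr} \to \Or^\an(U)$ is the same as an $\Or^\an$-linear map $E^\an|_U \to \Or^\an|_U$ commuting with the $\pa_i$. In other words, it is a section over $U$ of the dual bundle $(E^\an)^*$ that is horizontal for the dual connection.

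\textbf{Step 3: local triviality.} On a small polydisc $U$, trivialize $E^\an$ by a frame $e_1,\dots,e_n$. The horizontality condition becomes a linear system of first-order PDEs
\[
\pa_i f = \Gamma_i f, \qquad i = 1,\dots,r,
\]
for $f \in \Or^\an(U)^n$, with holomorphic matrices $\Gamma_i$. Flatness gives the compatibility condition $\pa_i \Gamma_j - \pa_j \Gamma_i + [\Gamma_i,\Gamma_j] = 0$. By the holomorphic Frobenius theorem (Cauchy--Kovalevskaya for integrable systems), for each $f_0 \in \C^n$ and base point $p \in U$ there is a unique solution with $f(p) = f_0$.

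It follows that the sheaf of solutions restricted to $U$ is the constant sheaf $\C^n$. Hence $\mathrm{Hom}(D_{\Mr},\Or^\an)$ is locally constant of rank $n$.

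\textbf{Main obstacle.} The substantive input is the finite generation of $D_{\Mr}$, which uses $C_1$-cofiniteness of $M_1,\dots,M_r$ and finite generation of $M_0^\vee$. Here I would follow Huang's argument \cite{H2}. Using the relations \eqref{eq_ker1}, every $a(-n)_i$ with $n \ge 1$ acting on the $i$-th slot can be rewritten in terms of lower modes $a(k)_s$ on the other slots, with coefficients in $\Or^\alg$, plus $a(\cdot)_0^*$ acting on the dual slot. One then inducts on a filtration by $L(0)$-weight: the $C_1$-cofinite quotients control the in-states, and finite generation of $M_0^\vee$ controls the out-state, showing that a finite-dimensional subspace of $\Mr$ generates $D_{\Mr}$ over $\Or^\alg$.

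The delicate point is keeping track of the $\Or^\alg$-coefficients $(z_s - z_i)^{n-k}$ while ensuring the induction terminates. Since the statement allows citing \cite[Proposition 4.6]{M1}, I would take this step as given, and the remaining argument is routine.
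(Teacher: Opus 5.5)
Your proposal is correct and follows the paper's route. You take finite generation of $D_{\Mr}$ over $\Or^\alg$ from \cite[Proposition 4.6]{M1}, then apply \cite[Theorem 1.4.10]{HTT} to get a locally free module of finite rank, i.e.\ an integrable connection, whose holomorphic solution sheaf is a local system. Your Steps 2--3 (identifying solutions with horizontal sections of the dual connection, then applying the holomorphic Frobenius theorem on polydiscs) simply spell out the standard last step that the paper leaves implicit.
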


For a locally constant sheaf, we can define the monodromy representation of the fundamental groupoid $\Pi_1(\Xr)$ by analytic continuations.

Recall that for $A \in \Tr_r$, $T_A^\conv$ inherits a left $\Dr$-module structure. Then, 
\begin{align}
\mathrm{Hom}_{\Dr}(D_{\Mr}, T_A^\conv)
\label{eq_formal_sol}
\end{align}
is the space of all convergent formal solutions in $T_A^\conv$.
%
Since we impose the convergence in $U_A$ on the formal power series and $U_A$ is simply-connected,
any formal solutions \eqref{eq_formal_sol} 
define a well-defined section of $\CB_\Mr(U_A)$.
This gives a linear map
\begin{align*}
s_A:\mathrm{Hom}_{\Dr}(D_{\Mr}, T_A^\conv) \rightarrow \mathrm{Hom}_{\Dr}(D_{\Mr}, \Or^\an(U_A)).
\end{align*}

Conversely, we showed that 
any conformal block has an expansion of the form in $T_A^\conv$:
\begin{thm}\cite[Theorem 4.23]{M1}
\label{thm_expansion}
Let $M_0,M_1,\dots,M_r \in \Vmodf$.
For $A \in \Tr_r$, $s_A$ is an isomorphism of vector spaces. The inverse map, which is defined by the series expansion, is denoted by
$$
e_A:\CB_{\Mr}(U_A) \rightarrow \mathrm{Hom}_{\Dr}(D_{\Mr},T_A^\conv).
$$
\end{thm}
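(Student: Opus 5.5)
Injectivity of $s_A$ is the easy half: if a convergent formal solution $\phi\in\mathrm{Hom}_{\Dr}(D_{\Mr},T_A^\conv)$ gives the zero function on $U_A$, then each series $\phi(\mr)$ vanishes identically on some $U_A^{\mathfrak{p}}$. A series in $T_A^{\mathfrak{p}}$ is a finite sum of monomials $z_A^n x_A^{s}(\log x_A)^k\prod_e\zeta_e^{r_e}(\log\zeta_e)^{k_e}F$. Such a series vanishes identically only if it is zero. I would prove this by induction on the number of variables. The key input is that the functions $t^{r}(\log t)^k$ on a slit disc are linearly independent over convergent power series for $r$ taken modulo $\Z$.

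The real content is surjectivity: every section $C\in\CB_{\Mr}(U_A)$ must admit an expansion in $T_A^\conv$. My plan is to induct on the tree, using the $A$-coordinates $\Psi_A$. The divisors $\zeta_e=0$ are normal crossings in the compactification given by $\Psi_A^{-1}$ of Proposition~\ref{prop_psiA}. The key step is to show that $D_{\Mr}$, written in these coordinates, has regular singularities along each $\zeta_e=0$ (and $x_A=0$). One way is to take finitely many generators, coming from the finite generation of \cite[Proposition 4.6]{M1}. One can then choose them $L(0)$-homogeneous, using the $C_1$-cofiniteness of $M_1,\dots,M_r$ and the finite generation of $M_0^\vee$. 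The differential operators $\zeta_e\partial_{\zeta_e}$ then act on this span by a matrix with holomorphic entries in $\zeta$ (logarithmic connection).

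This is exactly the refinement of Huang's argument \cite{H2}. The generators \eqref{eq_ker1} allow one to rewrite $\zeta_e\partial_{\zeta_e}$, which is essentially an $L(0)$ action on an intermediate cluster, via the modes $L(n)$ with $n\ge -1$ acting on the cluster of leaves below $e$. Modulo $N_{\Mr}$, these reduce to terms of bounded weight with coefficients holomorphic near $\zeta_e=0$.

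Once the system has a logarithmic connection form $\zeta_e\partial_{\zeta_e}\Phi=\Omega_e(\zeta)\Phi$ with commuting residues, the standard theory of regular singular integrable connections applies. This is the several-variable Frobenius/Deligne theory, as in \cite{HTT} for regular holonomic modules. It gives a fundamental solution of the form $H(\zeta)\prod_e\zeta_e^{\Omega_e(0)}$ with $H$ convergent on a polydisc. Every local solution on $U_A^{\mathfrak p}$ is therefore an element of $T_A^{\mathfrak{p}}$. The dependence on $z_A$ is polynomial, via $\partial_{z_A}=\sum_i\partial_i$ acting through $L(-1)$ on $M_0^\vee$. The dependence on $x_A$ comes from homogeneity, i.e.\ the total $L(0)$ action. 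Compatibility over different admissible $\mathfrak{p}$, together with simple connectivity of $U_A$, yields a single element of $T_A^\conv$. This element satisfies the $\Dr$-relations formally, since these hold on an open set. So it lies in $\mathrm{Hom}_{\Dr}(D_{\Mr},T_A^\conv)$, and $s_A$ of it is $C$.

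I expect the main obstacle to be establishing the regular singularity (the logarithmic connection with holomorphic coefficients) simultaneously along all edges of a general tree. For $r=3$ this is Huang's result. For general $A$, I would proceed one edge at a time, applying the $r=3$-type argument to the sub-configuration cut at $e$. Controlling the coefficients uniformly in the other $\zeta_{e'}$ requires that the filtration by weight is compatible with all the cluster decompositions simultaneously.
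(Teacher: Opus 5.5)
The paper does not prove this statement; it quotes it from \cite{M1}. So I judge your outline on its own terms. Injectivity is fine. Reducing surjectivity to regular singularity of $D_{\Mr}$ along $\{\zeta_e=0\}$ in the $A$-coordinates, followed by several-variable Frobenius theory, is a sensible strategy. The gap is that the step carrying all the content is asserted rather than proved, and it does not follow from what you cite.

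\textbf{The logarithmic lattice.} Finite generation plus $L(0)$-homogeneity does not make $\zeta_e\partial_{\zeta_e}$ act on the span of the generators by a matrix holomorphic at $\zeta=0$. Already for $A=1(23)$ we have $\zeta\partial_\zeta=z_{23}\partial_2$, so $\zeta\partial_\zeta(1\otimes L(-1)_2m)=z_{23}\otimes L(-1)_2^2m$. Reducing this back to bounded weight via \eqref{eq_ker1} produces coefficients $z_{23}(z_3-z_2)^{-1-k}$, which have poles of order $k$ at $\zeta=0$. This is visible in Table~\ref{table_differential}: rewriting $D_{\fs,\fs,\fs,\fs}$ as a first-order system in $(C,\partial_zC)$ gives a $z^{-1}$ entry in the matrix of $z\partial_z$.

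So you must pass to a different lattice, for instance one built from generators rescaled by powers of $\zeta_e$ reflecting the weight flowing through $e$. You then have to show that this module is finitely generated over functions holomorphic near $\zeta=0$, for all edges simultaneously. This is not a formal consequence of homogeneity. The relations \eqref{eq_ker1} that move a mode from a leaf outside the cluster below $e$ to a leaf inside it raise the cluster weight without producing any power of $\zeta_e$.

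This is exactly what you call the main obstacle and leave open. Working ``one edge at a time'' would at best give regularity along each $\{\zeta_e=0\}$ at its generic points. To get the simultaneous statement at the normal crossing point you would still need a Deligne-type extension theorem, or a Laurent-series argument with bounds uniform in the other variables.

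\textbf{Convergence on every admissible polydisc.} Even granting such a lattice, the local theory only gives series converging on some polydisc where the connection matrices are holomorphic, not on $\prod_e\D_{p_e}$ for every admissible $\mathfrak p$. So ``every local solution on $U_A^{\mathfrak p}$ is therefore in $T_A^{\mathfrak p}$'' does not follow. Compatibility over different $\mathfrak p$ plus simple connectivity cannot enlarge a radius of convergence. You need an extra argument, for example:
\begin{enumerate}
\item By Proposition~\ref{prop_coherence} and admissibility, a section continues over all of $\Psi_A^{-1}(\C\times\C^\times\times\prod_e\D_{p_e}^\times)\subset\Xr$, whose fundamental group is abelian.
\item Split according to the commuting monodromies and strip off the factors $x_A^{\lambda}\prod_e\zeta_e^{\lambda_e}$ and the logarithms. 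What remains are single-valued holomorphic functions on a product of punctured discs, whose Laurent expansions converge on the whole product.
\item Regular singularity is then used only to bound the negative powers.
\end{enumerate}
Alternatively, show that the lattice and connection matrices are defined on the whole admissible polydisc. Separately, $H(\zeta)\prod_e\zeta_e^{\Omega_e(0)}$ is the form of a fundamental solution only under non-resonance. In general one gets finite sums of $\zeta^{r}(\log\zeta)^kF(\zeta)$, which is all you need.
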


Let us consider the simplest tree $(12) \in \Tr_2$. Note that, in this case,
\begin{align*}
T_{12} = \C[z_2, z_{12}^\C, \log(z_{12})]=T_{12}^\conv
\end{align*}
is just a polynomial and does not contain any infinite series, since the set of all edges $E(12)$ is empty.
Let $M_i \in \Vmodu$ ($i=0,1,2$).
For $\Y(\bullet,z) \in I_{\log}\binom{M_0}{M_1M_2}$ and $u \in M_0^\vee$, $m_i \in M_i$,
\begin{align*}
\langle u, \exp(L(-1)z_2) \Y(m_1,z_{12})m_2 \rangle \in \C[z_2, z_{12}^\C, \log(z_{12})] = T_{12}^\conv
\end{align*}
and it is easy to show that this is an element of $\mathrm{Hom}_{D_{X_2(\C)}}(D_{M_{[0;2]}},T_{(12)}^\conv)$.
Then, we have (see \cite[Proposition 5.6]{M1}):
\begin{prop}\label{prop_int}
The above map $I_{\log}\binom{M_0}{M_1M_2} \rightarrow \CB_{M_{[0;2]}}(U_{12})$
is an isomorphism.
\end{prop}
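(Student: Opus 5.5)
The plan is to prove Proposition~\ref{prop_int} by combining Theorem~\ref{thm_expansion} for the tree $(12)$ with a direct identification of formal solutions in $T_{12}^\conv$ with logarithmic intertwining operators. By Theorem~\ref{thm_expansion}, $s_{(12)}$ identifies $\mathrm{Hom}_{D_{X_2(\C)}}(D_{M_{[0;2]}},T_{12}^\conv)$ with $\CB_{M_{[0;2]}}(U_{12})$. It therefore suffices to show that the map
\[
\Y \longmapsto \bigl(u\otimes m_1\otimes m_2\mapsto \langle u,\exp(L(-1)z_2)\Y(m_1,z_{12})m_2\rangle\bigr)
\]
is a bijection from $I_{\log}\binom{M_0}{M_1M_2}$ onto this space of formal solutions. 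For $M_i$ outside $\Vmodf$ one would instead argue directly: $U_{12}=\{z_{12}\in\C^\cut\}$ is simply connected, and a holomorphic solution there is determined by its restriction to $z_2=0$, which is a function of $z_{12}$ only.

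Well-definedness and injectivity are the easy steps. Lemma~\ref{lem_module_recursive} with $r=2$ says exactly that the function kills the generators \eqref{eq_ker1} and intertwines the $\pa_i$-action \eqref{eq_D_def}, so it lies in the Hom space. Injectivity follows by setting $z_2=0$: from $\langle u,\Y(m_1,z)m_2\rangle$ for all $u\in M_0^\vee$ one recovers every coefficient $m_1(r;k)m_2$, because $M_0$ is graded by finite-dimensional generalized eigenspaces.

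Surjectivity is the main obstacle. Given a formal solution $C$, I would define $\Y(m_1,z)m_2\in M_0[[z^\C]][\log z]$ by $\langle u,\Y(m_1,z)m_2\rangle = C(u,m_1,m_2)|_{z_2=0,\,z_{12}=z}$, and then verify the axioms in turn.
\begin{itemize}
\item The $\pa_2$-equation together with the $L(-1)$ case of \eqref{eq_ker1} at $i=2$ recovers the factor $\exp(L(-1)z_2)$, so $C$ is indeed the function attached to $\Y$.
\item The $\pa_1$-equation gives (I2).
\item Relation \eqref{eq_ker1} with $i=1$ and $i=2$ gives the two identities in (I3). The $i=2$ case yields the commutator formula after moving the $a(k)_0^*$ terms, and the $i=1$ case yields the associator-type formula.
\end{itemize}
For (I1), the lower truncation of $\Y(m_1,z)m_2$, I would use the $L(0)$ relation, obtained from \eqref{eq_ker1} with $a=\omega$, $n=1$. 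It shows that $C$ is a finite sum of terms $z_{12}^{h_0-h_1-h_2+n}(\log z_{12})^k$ with generalized conformal weights. Since $T_{12}^\conv=\C[z_2,z_{12}^\C,\log z_{12}]$ consists of finite sums, only finitely many exponents and log-powers occur for fixed $u,m_1,m_2$. Combined with the grading condition (3) on $M_0$, this gives the truncation needed to land in $M_0[[z]][z^\C,\log z]$.

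The delicate point is bookkeeping rather than an essentially new argument: the $(-z_i)^k$ and $(z_s-z_i)^{n-k}$ expansions must match (I3) exactly after specializing $z_2=0$.
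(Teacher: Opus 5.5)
The paper itself gives no argument here beyond citing \cite[Proposition~5.6]{M1}. Most of your skeleton is sound: well-definedness from Lemma~\ref{lem_module_recursive}, injectivity by specializing $z_2=0$, and (I2), (I3) from \eqref{eq_ker1} with $i=1,2$; the signs do match after setting $z_2=0$.

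\textbf{The gap is the $\log z$ bound in your (I1) step.} Definition~\ref{def_int} requires $\Y(m_1,z)m_2\in M_0[[z]][z^{\C},\log z]$, and even $\Y(m_1,z)\in \mathrm{Hom}(M_2,M_0)[[z^{\C}]][\log z]$. So the power of $\log z$ in $\langle u,\Y(m_1,z)m_2\rangle$ must be bounded uniformly in $u\in M_0^\vee$, and also in $m_2$. Knowing that only finitely many powers occur for each fixed $u$ gives no such uniformity. Condition (3) does not help either: it controls only the generalized eigenvalues of $L(0)$, not the sizes of its Jordan blocks.

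What the relations actually give is an ODE. Use the case $a=\omega$, $n=1$ of \eqref{eq_ker1} and eliminate $\pa_2$ via the case $n=0$, which gives $(\pa_1+\pa_2)C(u)=C(L(-1)^*u)$. With $g(w)=C(w)|_{z_2=0}$ and $N=L(0)^*_0-L(0)_1-L(0)_2$, this yields $z\frac{d}{dz}g(w)=g(Nw)$. Hence, for homogeneous $m_1\in(M_1)_{h_1}$ and $m_2\in(M_2)_{h_2}$, on the $N$-stable finite-dimensional space $W=(M_0)_{h_0}^*\otimes(M_1)_{h_1}\otimes(M_2)_{h_2}$ one has $g|_W=G_0\circ z^{N}$ for some $G_0\in W^*$. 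The log-degree is therefore controlled by the nilpotency order of $N-(h_0-h_1-h_2)$ on $W$. A uniform bound needs a global bound on the nilpotent part $L(0)-L(0)_{ss}$ on $M_0$ and $M_2$.

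In $\Vmodf$ this bound holds, for two reasons:
\begin{itemize}
\item A $C_1$-cofinite module graded as in (3) is generated by a finite-dimensional graded complement of $C_1(M)$ (induct on weight).
\item $L(0)-L(0)_{ss}$ commutes with every $a(n)$, so its nilpotency order on the whole module equals its order on a finite-dimensional $L(0)$-stable generating subspace.
\end{itemize}
The same formula shows that the coefficient of $z^{-r-1}(\log z)^k$ lies in the single space $(M_0)_{h_1+h_2-r-1}$. That is why it defines an element of $M_0$ rather than of $\prod_h(M_0)_h$, and you should say this explicitly.

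\textbf{Your fallback for $M_i\notin\Vmodf$ omits the two facts that make it work.}
\begin{itemize}
\item Local nilpotency of $L(-1)^*$ on $M_0^\vee$. Together with $(\pa_1+\pa_2)C(u)=C(L(-1)^*u)$ on $U_{12}\cong \C\times\C^{\cut}$, it gives $C(u)=\sum_k \frac{z_2^k}{k!}C(L(-1)^{*k}u)|_{z_2=0}$.
\item The ODE above, which puts the restriction in $\C[z^{\C},\log z]$.
\end{itemize}
With these, the case $r=2$ needs no appeal to Theorem~\ref{thm_expansion} at all, which is the more elementary route. But outside $\Vmodf$ the Jordan-block bound is not automatic. $\Vmodu$ contains modules whose $L(0)$-Jordan blocks have unbounded size, e.g.\ an infinite direct sum of Heisenberg modules on which $h(0)$ acts with Jordan blocks of growing size and growing lowest weight. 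So there the uniform $\log z$ bound required by Definition~\ref{def_int} is an extra input that your argument does not provide.
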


Let $A\in \Tr_r$, $B\in \Tr_{s}$ and $p\in [r]$. Then, $A\circ_p B\in \Tr_{r+s-1}$ as in Fig. \ref{fig_tree}.
Let $M_0, M_1^A,\dots,M_r^A$ and $M_1^B,\dots,M_{s}^B$ be $V$-modules in $\Vmodf$.
Set
\begin{align*}
M_{r\circ_p s}^{A,B}= M_0^\vee \otimes  M_1^A\otimes M_2^A \otimes \cdots \otimes 
M_{p-1}^A \otimes M_1^B\otimes \cdots \otimes M_s^B \otimes M_{p+1}^A \otimes \cdots \otimes M_r^A
\end{align*}
and let
\begin{align*}
C_A \in \CB_{M_0;M_1^A,\dots,M_r^A}(U_A) \text{ and } C_B \in \CB_{M_p^A;M_1^B,\dots,M_s^B}(U_B).
\end{align*}
We recall from \cite[Section 5.3]{M1}
 the multicategorical composition (the gluing of solutions) of $C_A$ and $C_B$,
which defines a new conformal block
\begin{align*}
C_A \circ_p C_B \in \CB_{M_{r\circ_p s}^{A,B}}(U_{A \circ_pB}).
\end{align*}

By Remark \ref{rem_assign_hol} and Theorem \ref{thm_expansion}, we regard conformal blocks as
convergent formal power series valued linear maps on $M_{r\circ_p s}^{A,B}$.
For $m_{r\circ_p s}^{A,B}= u \otimes  m_1^A\otimes m_2^A \otimes \cdots \otimes 
m_{p-1}^A \otimes m_1^B\otimes \cdots \otimes m_s^B \otimes m_{p+1}^A \otimes \cdots \otimes m_r^A \in M_{r\circ_p s}^{A,B}$,
define the composition by
\begin{align}
&(C_A \circ_p C_B)(m_{[r \circ_p s]})\label{eq_comp}\\
&= \sum_{h \in \C} \Bigl(\sum_{i \in I_h} 
e_A(C_A)(u,m_1^A,\dots,m_{p-1}^A,e_i^h, m_{p+1}^A,\dots,m_r^A)
e_B(C_B)(\exp(-L(-1)^*z_{r_B})e_h^i,m_1^B,\dots,m_s^B)\Bigr) \nonumber\\
& \in \C[[\zeta_e\mid e\in E(A\circ_p B)]] [z_{A\circ_p B}, x_{A\circ_p B}^\C,\log x_{A \circ_p B},\zeta_e^\C,\log\zeta_e \mid e\in E(A\circ_p B)].\nonumber
\end{align}
Here, $\{e_i^h\}_{i \in I_h}$ is a basis of $(M_p^A)_h$ and $\{e_h^i\}_{i \in I_h}$ is the dual basis of $(M_p^A)_h^*$.
This infinite sum is well-defined as formal power series, i.e., each coefficient of formal variables is a finite sum.
It is non-trivial that the right-hand side of \eqref{eq_comp} is in $T_{A\circ_p B}^\conv$,
that is, absolutely convergent in $U_{A\circ_p B}$.
This result is obtained in \cite[Theorem 5.11]{M1}.
Hence, \eqref{eq_comp} gives a section of 
$\mathrm{Hom}_{D_{X_{r+s-1}}}(D_{M_{r\circ_p s}^{A,B}}, T_{A\circ_pB}^\conv)$,
and thus, $\CB_{M_{r\circ_p s}^{A,B}}(U_{A\circ_p B})$.

To summarize, we have the following result:
\begin{thm}
\label{thm_glue}
The following sum is absolutely convergent
\begin{align*}
\sum_{h \in \C} \Bigl|\sum_{i \in I_h} 
e_A(C_A)(u,m_1^A,\dots,m_{p-1}^A,e_i^h, m_{p+1}^A,\dots,m_r^A)
e_B(C_B)(\exp(-L(-1)^*z_{r_B})e_h^i,m_1^B,\dots,m_s^B)\Bigr|,
\end{align*}
and \eqref{eq_comp} is locally uniformly convergent in $U_{A\circ_p B}$ to an element in $\CB_{M_{r\circ_p s}^{A,B}}(U_{A\circ_p B})$.
In particular, \eqref{eq_comp} defines the linear map:
\begin{align}
\comp_{p}:\CB_{M_0;M_1^A,\dots,M_r^A}(U_A)\otimes\CB_{M_p^A;M_1^B,\dots,M_s^B}(U_B)
\rightarrow \CB_{M_{r\circ_p s}^{A,B}}(U_{A\circ_p B}).\label{eq_glue}
\end{align}
\end{thm}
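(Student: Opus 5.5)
Since $\comp_p$ is only asserted after convergence is established, the plan splits into three parts: absolute convergence of the sum over $h$, local uniform convergence on $U_{A\circ_p B}$, and the verification that the limit solves the $D$-module equations defining $D_{M^{A,B}_{r\circ_p s}}$.

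\emph{Reduction to finitely many generators.} By Theorem \ref{thm_expansion}, $e_A(C_A)$ and $e_B(C_B)$ are convergent formal solutions in $T_A^\conv$ and $T_B^\conv$. Because $D_{M_{[0;r]}}$ is a finitely generated $\Or^\alg$-module (the finiteness input behind Proposition \ref{prop_coherence}), there are finitely many vectors $w_1,\dots,w_N$ with the following property. Every $C_A(u,\dots,m,\dots)$ with $m\in M_p^A$ in the $p$-th slot can be written as $\sum_j f_j\,C_A(w_j)$. The coefficients $f_j$ are obtained from the recursion in Lemma \ref{lem_module_recursive} and are polynomial combinations of $(z_i-z_j)^{\pm1}$ and derivatives. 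The same holds for $C_B$.

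\emph{Structure of the sum over $h$.} The inner sum over $i\in I_h$ is the pairing of the $h$-component of a vector of $M_p^A$ with its dual. Write the left factor using the intertwining-operator-like expression at the $p$-th leaf, with $x_B=z_{t_B}$ as the local scaling variable. By (P2) applied to $L(0)$, the $h$-graded piece contributes exactly the factor $(x_B/x_A\text{-type ratio})^{h}$. This ratio is the product of the $\zeta_e$ along the edge joining the root of $B$ to its parent in $A\circ_p B$.

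\emph{Main estimate.} The main obstacle is to show that the generating series in this new variable $\zeta_{e_0}$ is absolutely convergent for $|\zeta_{e_0}|$ small, uniformly in the other variables. I would attempt this by the Huang-style argument adapted to $r$ points. The sum over $h$, viewed as a function of $\zeta_{e_0}$, satisfies a regular-singular ODE in $\zeta_{e_0}$. This ODE is obtained from the finitely generated $D$-module, by expressing $L(0)$ on the intermediate module through the $L(-1)$ and $L(0)$ actions at the outer points, via the recursion in Lemma \ref{lem_module_recursive} with $b=\omega$. Its coefficients are expansions of elements of $\Or^\alg$ and converge on the admissible polydisc. Frobenius theory then implies that the formal solution, which lies a priori in $T_{A\circ_p B}$, converges absolutely on $\Pi\D_{p_e}^\times$ for an $(A\circ_p B)$-admissible $\mathfrak p$. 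Absolute convergence of $\sum_h|\cdot|$ then follows by majorizing each term with the convergent power series in $|\zeta_{e_0}|$. Local uniform convergence follows from the Cauchy estimates on compact subsets.

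\emph{Solution property.} Each finite truncation satisfies the $L(-1)$ relation (1) and the relation (2) of Lemma \ref{lem_module_recursive} up to terms involving $a(n)$ acting on the intermediate slot. These terms are moved across by the duality $\sum_i e^h_i\otimes e_h^i$: the contribution $a(n)$ acting on $e_i^h$ in $C_A$ cancels against $a(n)^*$ acting on $e_h^i$ in $C_B$. The factor $\exp(-L(-1)^*z_{r_B})$ supplies exactly the translation needed to convert the $(-z_i)^k$ terms at $0$ for $B$ into $(z_s-z_i)$ terms relative to $z_{r_B}$. Hence the limit is annihilated by $N_{M^{A,B}_{r\circ_p s}}$. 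It therefore defines an element of $\mathrm{Hom}_{\Dr}(D,T^\conv_{A\circ_p B})$, and $s_{A\circ_pB}$ sends it into $\CB(U_{A\circ_pB})$. Linearity of $\comp_p$ is clear.
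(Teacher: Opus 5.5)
Your overall architecture is sound, and the paper itself gives no argument here; it simply quotes \cite[Theorem 5.11]{M1}. You also correctly see that only the new divisor $\zeta_{e_0}=0$ needs work. For fixed $h$ the sum over $I_h$ is finite and built from the convergent expansions $e_A(C_A)$ and $e_B(C_B)$, so a Frobenius argument in $\zeta_{e_0}$, with the other coordinates as parameters, would suffice.

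The gap is the one input that argument needs: a system $\zeta_{e_0}\partial_{\zeta_{e_0}}Y=\Omega Y$ with $\Omega$ holomorphic at $\zeta_{e_0}=0$, uniformly in the other coordinates. You assert this, but the mechanism you offer does not produce it. The case $b=\omega$, $n=1$ of Lemma~\ref{lem_module_recursive} at the $p$-th slot is just the scaling identity. It rewrites $L(0)$ on the intermediate module (that is, $\zeta_{e_0}\partial_{\zeta_{e_0}}$ plus the weights at the leaves of $B$) as a combination of $(z_j-z_{r_B})L(-1)_j$ and $L(0)_j$ over the remaining leaves, plus $L(0)^*$ and $L(-1)^*$ acting on $u$. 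This trades the $\zeta_{e_0}$-derivative for other derivatives and other vectors, so it does not close up on its own. Closing up means re-expressing everything through the generators $w_1,\dots,w_N$, with coefficients in $\Or^\alg$. As soon as \eqref{eq_ker1} is used to lower a weight at a leaf of $B$, these include $(z_s-z_i)^{-1-k}$ with $s,i\in\Leaf(B)$, which are poles of order $1+k$ in $\zeta_{e_0}$. Finite generation over $\Or^\alg$ says nothing about these orders, so a priori $\zeta_{e_0}=0$ could be an irregular point. There Frobenius theory fails and formal solutions really do diverge: Euler's series $\sum_n n!\,\zeta^n$ solves $\zeta^2y''+(3\zeta-1)y'+y=0$.

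What is missing is a Fuchs-type statement: a finitely generated lattice over functions holomorphic near $\zeta_{e_0}=0$ that is stable under $\zeta_{e_0}\partial_{\zeta_{e_0}}$. You would get it from a Huang-type filtration in which differences of points of the $B$-cluster have degree one and vectors at leaves of $B$ carry their $L(0)$-weight. Then each use of \eqref{eq_ker1} creates at most as much pole order in $\zeta_{e_0}$ as the weight it removes, and $C_1$-cofiniteness bounds the weights that must be kept. This regular-singularity input is the actual content of the theorem, and the same input underlies Theorem~\ref{thm_expansion}. Calling it a ``Huang-style argument adapted to $r$ points'' names it but does not supply it.

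Two further remarks. First, the solution property must come before the convergence step, as an identity of formal series in $T_{A\circ_pB}$, since Frobenius theory acts on formal solutions. It holds coefficientwise via $\sum_i a(n)e_i^h\otimes e_h^i=\sum_j e_j^{h'}\otimes a(n)^*e_{h'}^j$ with $h'=h+\mathrm{wt}\,a-n-1$. It does not hold for truncations in $h$, because $a(n)$ shifts $h$.

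Second, once the solution property is in hand, you may be able to bypass the lattice by invoking Theorem~\ref{thm_expansion} for the tree $A\circ_pB$ itself. This route requires $T_{A\circ_pB}$ to be a differential domain whose fraction field has field of constants $\C$, which you would have to verify. Granting that, the usual Wronskian argument gives $\dim_\C\mathrm{Hom}_{D_{X_{r+s-1}}}(D_{M^{A,B}_{r\circ_ps}},T_{A\circ_pB})\le\operatorname{rank}D_{M^{A,B}_{r\circ_ps}}$. The expansions of $\CB_{M^{A,B}_{r\circ_ps}}(U_{A\circ_pB})$ already supply that many independent solutions in $T^\conv_{A\circ_pB}$. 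So the formal composition would lie in $T^\conv_{A\circ_pB}$. Absolute convergence of the sum over $h$ then follows, because each $h$-term is a sub-sum of that absolutely convergent expansion.
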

This is the composition operation on the multi-hom spaces of the
pseudo-braided category discussed in the next section.

\section{Pseudo-braided category structure on conformal blocks}\label{sec_pseudo}
In this section, we review the definition of a pseudo-braided category and state Theorem A.
The collection \(\{\Tr_r\}_{r\geq 0}\) of binary trees forms the free
symmetric operad generated by one binary operation, usually called the {\it magma
operad}. (For the definition and basic properties of operads, we refer the reader to \cite{LoV,Fr}. For a more concise overview, see also \cite[Section 2]{M1}.)

More explicitly, its operad structure is described as follows:
Let $A \in \Tr_n$ and $B\in \Tr_m$ with $n> 0$ and $p \in [n]$.
The partial composition of the operad is then defined as shown in Fig. \ref{fig_tree}.
In general, $A\circ_p B$ is defined by inserting the tree $B$ into the leaf labeled with $p$ in $A$,
adding $p-1$ to labels of the leaves in $B$, and adding $m-1$ to the labels of the leaves after $p+1$ in $A$.
If $B=\emptyset$, the $p$-th leaf in $A$ is simply erased, and all subsequent leaf labels (from $p+1$ onward) are shifted down by $1$.
For example,
\begin{align*}
3((12)4) \circ_2 \emptyset = 2(13).
\end{align*}
The symmetric group $S_r$ acts on $\Tr_r$ by the permutation of labels,
which satisfies the definition of a symmetric operad.

We next look back at the definition of the parenthesized braid operad $\CPaB$ introduced in \cite{Bar,Ta1,Fr}.
Let $\cat$ be the category of small categories, i.e., objects are small categories and morphisms are functors. 
By the direct product of categories, $\cat$ has a structure of a symmetric monoidal category.
The notion of an operad can be considered in any symmetric monoidal category, and $\CPaB$ is an operad object in $\cat$.

\begin{figure}[t]
      \begin{minipage}[b]{0.45\linewidth}
    \centering
    \includegraphics[width=3.2cm]{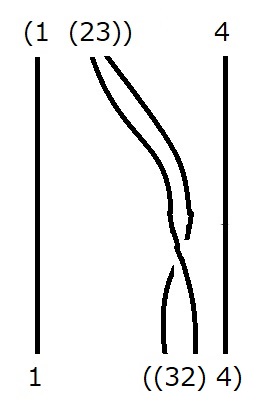}
    \caption{The morphism $\alpha\circ_2\sigma$}\label{fig_alphasigma}
  \end{minipage}
    \begin{minipage}[b]{0.45\linewidth}
    \centering
    \includegraphics[width=3.2cm]{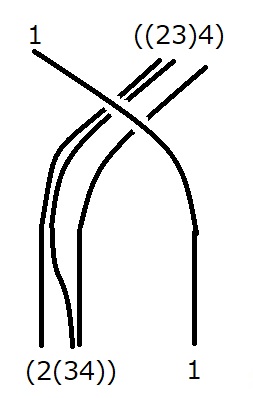}
    \caption{The morphism $\si\circ_2\al$}\label{fig_sigmaalpha}
  \end{minipage}
\end{figure}

For each $r\geq 1$, $\CPaB(r)$ is the category defined as follows:
The set of all objects in $\CPaB(r)$ is the set of binary trees $\Tr_r$,
\begin{align*}
\mathrm{Ob}(\CPaB(r))=\Tr_r.
\end{align*}

Let $p:B_r \rightarrow S_r$ be the canonical projection from the braid group to the symmetric group whose kernel is the pure braid group $PB_r$.
Denote by $g:\Tr_r\rightarrow S_r$ the map given by forgetting the parenthesization and viewing trees as permutations.
Then, for  $A,B \in \Tr_{r}$, the space of morphisms is defined by
\begin{align}
\Hom_{\CPaB(r)}(A,B)=\C p^{-1}({g_A^{-1}g_B}),
\label{eq_perm_braid}
\end{align}
where $\C p^{-1}({g_A^{-1}g_B})$ is a $\C$-linear space with a basis $p^{-1}({g_A^{-1}g_B})$.
The composition law is induced from the one on $B_r$.
The symmetric group $S_r$ acts on $\CPaB(r)$ via renumbering the objects $\Tr_r$ and acts identically on morphisms.
The composition
\begin{align*}
\circ_p: \CPaB(n)\times \CPaB(m)\rightarrow \CPaB(n+m-1)
\end{align*}
is given by replacing the $p$-th strand of the first braid, by the second braid made very thin (see Fig. \ref{fig_thin} in the introduction). This composition is consistent with the magma operad when restricted to objects.

For $r=0$, $\CPaB(0)$ is a category whose object is the only empty parenthesized word $\emptyset$ and whose 
morphism consists only of the identity map $\Hom(\emptyset,\emptyset)=\C\{\id\}$.
The composition
\begin{align*}
\circ_p: \CPaB(n)\times \CPaB(0)\rightarrow \CPaB(n-1)
\end{align*}
is given by just erasing the $p$-th strand.

An important point here is that the operad $\CPaB$ is generated by $\sigma \in \CPaB(2)$ and $\alpha \in \CPaB(3)$ (see Fig. \ref{fig_sigma}, Fig.~\ref{fig_alpha}, Fig.~\ref{fig_alphasigma}, and Fig.~\ref{fig_sigmaalpha}). These elements correspond to the braiding and the associator in a braided tensor category, respectively. 
The defining relations among these generators are precisely the operadic
forms of the pentagon and hexagon identities. This is the reason why an
action of \(\CPaB\) on representable multi-hom spaces gives rise to a braided
tensor category (see \cite{Fr} or \cite[Section 2]{M1}).


A {\it pseudo-tensor category}, also known as a {\it multicategory} (see for example \cite{Lam, BD}), is a generalization of
a tensor category in which the tensor product is not necessarily
representable. In an ordinary category, morphisms have one input and one
output: $\Hom(M;L)$.
In a pseudo-tensor category, this is replaced by {\it multi-hom spaces}
\[
  \Hom(M_1,\ldots,M_r;L),
\]
where \(M_1,\ldots,M_r\) are input objects and \(L\) is an output object,
together with composition maps satisfying the usual associativity axioms.
For example, a strict tensor category \((\cC,\boxtimes,\mathbf 1)\) gives a
pseudo-tensor category by setting
\[
  \Hom(M_1,\ldots,M_r;L)
  =
  \Hom_\cC(M_1\boxtimes\cdots\boxtimes M_r,L).
\]
Thus a pseudo-tensor category may be viewed as a structure in which the
multi-input operations are specified directly, without assuming that they are
represented by tensor products.

Soibelman introduced in \cite{So} a braided analogue of this notion, called a
pseudo-braided category. In \cite{M1}, we use a unital version formulated in
terms of the parenthesized braid operad. We recall here only the part of the
definition needed in this note; for the precise definition, see
\cite[Section~2.3]{M1}.

A unital pseudo-braided category \(\mathcal C\) has a class of objects \(\operatorname{Ob}(\mathcal C)\), a
distinguished unit object \(\mathbf 1\in \operatorname{Ob}(\mathcal C)\), and,
for each \(r\geq 1\), \(A\in T_r\), and objects
\(M_0,M_1,\ldots,M_r\), a vector space
\[
\operatorname{Hom}_A(M_1,\ldots,M_r;M_0).
\]

%
For \(A\in\Tr_n\), \(B\in\Tr_m\), and \(1\leq p\leq n\), there are composition
maps
\[
\begin{aligned}
&\circ_p:\Hom_A(N_1,\ldots,N_{p-1},L,N_{p+1},\ldots,N_n;R)
\otimes
\Hom_B(M_1,\ldots,M_m;L)
\\
&\hspace{4em}\longrightarrow
\Hom_{A\circ_p B}
(N_1,\ldots,N_{p-1},M_1,\ldots,M_m,N_{p+1},\ldots,N_n;R).
\end{aligned}
\]
These maps, together with the unit insertion maps, satisfy the usual associativity and unit axioms.
In addition, the parenthesized braid operad acts on the multi-hom spaces. Thus, for any morphism \(g:A\to B\) in \(\PaB(r)\), there is an
isomorphism
\[
  \rho(g):
  \Hom_A(M_1,\ldots,M_r;M_0)
  \longrightarrow
  \Hom_{B}(M_1,\ldots,M_r;M_0),
\]
functorial in \(g\). 
For readability, we use the symbol \(\bullet\) in diagrams below to suppress
the list of input and output objects.
The action of \(\PaB\) is compatible with the operadic composition. Namely,
for morphisms \(g_A:A\to A'\) in \(\PaB(n)\) and \(g_B:B\to B'\) in
\(\PaB(m)\), the following diagram commutes:
\[
\begin{CD}
\Hom_A(\bullet;\bullet)\otimes \Hom_B(\bullet;\bullet)
@>{\circ_p}>>
\Hom_{A\circ_p B}(\bullet;\bullet)
\\
@V{\rho(g_A)\otimes\rho(g_B)}VV
@VV{\rho(g_A\circ_p g_B)}V
\\
\Hom_{A'}(\bullet;\bullet)\otimes \Hom_{B'}(\bullet;\bullet)
@>{\circ_p}>>
\Hom_{A'\circ_p B'}(\bullet;\bullet).
\end{CD}
\]

Every unital non-strict braided tensor category \(\mathcal C\)
gives rise to a unital pseudo-braided category by
\begin{align*}
\mathrm{Hom}_A(M_1,\dots,M_r;L) = \mathrm{Hom}_C(\boxtimes_A M_{[r]},L)
\end{align*}
for any tree $A\in \Tr_r$ with the canonical \(\PaB\)-action induced by the associator and braiding (see \cite{Mac}),
where $\boxtimes_A$ is the composite of the tensor product $\boxtimes$ of shape $A$.
In this sense, a unital pseudo-braided category is a generalization of a
braided tensor category in which the multi-hom functors need not be
represented by tensor products. Conversely, \cite[Theorem~2.19]{M1} gives
necessary and sufficient conditions under which a unital pseudo-braided
category is represented by an actual braided tensor category.

We now state Theorem A in this setting.
Let $V$ be a vertex operator algebra. Assume that $V^\vee$ is a finitely generated $V$-module.
Define a multi-morphism space of $\Vmodf$ as follows:
\begin{align*}
\mathrm{Hom}_A(M_1,\dots,M_r;M_0) = \CB_{\Mr}(U_A).
\end{align*}
Then, the composition maps constructed in Theorem \ref{thm_glue}, together with analytic
continuation along morphisms in $\CPaB$, define a unital pseudo-braided
category structure on \(\Vmodf\) with unit object \(V\).

Furthermore, when $V$ is rational and $C_2$-cofinite, the bifunctor 
\[
\boxtimes: \Vmodf \times \Vmodf \rightarrow \Vmodf
\]
is defined by \eqref{def_otimes}, and for any $r \geq 0$, $M_i \in \Vmodf$, and binary tree $A$, there are natural isomorphisms
\begin{align}
\mathrm{Hom}_{\Vmodf}(\boxtimes_A (M_1,\dots,M_r),M_0) \cong \CB_{M_{[0;r]}}(U_A)\label{eq_natural_iso}
\end{align}
as shown in \cite[Theorem 7.22 and Theorem 7.23]{M1}.
Thus, analytic continuations on conformal blocks from \(U_A\) to \(U_{A'}\)
induce, by the Yoneda isomorphism, morphisms of representing objects in the opposite
direction. Based on this, we define the braiding $B_{M_1,M_2}: M_1 \boxtimes M_2 \rightarrow M_2 \boxtimes M_1$ by the commutative diagram
\begin{align}
\begin{split}
\begin{array}{ccc}
\mathrm{Hom}_{\Vmodf}(M_2 \boxtimes M_1,M_0)      &\overset{\cong}{\longrightarrow}& \CB_{M_0;M_1,M_2}(U_{21})\\
    {}_{B_{M_1,M_2}^*}\downarrow 
      && 
    \downarrow_{\rho(\si)^{-1}}
    \\
\mathrm{Hom}_{\Vmodf}(M_1 \boxtimes M_2,M_0)      &\overset{\cong}{\longrightarrow}& \CB_{M_0;M_1,M_2}(U_{12}),
\end{array}
\end{split}
\label{eq_def_braid}
\end{align}
and the associator $\al_{M_1,M_2,M_3}:(M_1\boxtimes M_2)\boxtimes M_3 \rightarrow M_1 \boxtimes (M_2\boxtimes M_3)$ by
\begin{align}
\begin{split}
\begin{array}{ccc}
\mathrm{Hom}_{\Vmodf}(M_1\boxtimes (M_2 \boxtimes M_3),M_0)      &\overset{\cong}{\longrightarrow}& \CB_{M_0;M_1,M_2,M_3}(U_{1(23)})\\
    {}_{\al_{M_1,M_2,M_3}^*}\downarrow 
      && 
    \downarrow_{\rho(\al)^{-1}}
    \\
\mathrm{Hom}_{\Vmodf}((M_1\boxtimes M_2)\boxtimes M_3),M_0)      &\overset{\cong}{\longrightarrow}& \CB_{M_0;M_1,M_2,M_3}(U_{(12)3}).
\end{array}
\end{split}
\label{eq_def_associator}
\end{align}
Here \(B_{M_1,M_2}^{*}\) and
\(\alpha_{M_1,M_2,M_3}^{*}\) denote the pullback maps induced by
\(B_{M_1,M_2}\) and \(\alpha_{M_1,M_2,M_3}\) on the corresponding Hom-spaces.
These definitions endow $\Vmodf$ with the structure of a balanced braided tensor category, with tensor product $\boxtimes$, braiding $B$, and twist $\theta_M= \exp(2\pi i L(0))$.
In the next section, we illustrate this construction for the Virasoro vertex
operator algebra $L(\ft,0)$, where the analytic continuations can be computed
explicitly and the resulting category is identified with a Tambara--Yamagami category.

\begin{rem}\label{rem_path_explicit}
When realizing morphisms in $\CPaB$ by paths in configuration spaces and
acting on conformal blocks by analytic continuation, 
there is a choice of whether the generator \(\sigma\) (as in Fig. \ref{fig_sigma}) should be represented
by a clockwise or counterclockwise half-turn in $X_2(\C)$.
This corresponds to the general fact that for any braided tensor category $\cC$, one can define a new braided tensor category $\cC^{\mathrm{rev}}$ by reversing the braiding:
\[
B^{\mathrm{rev}}_{M,N} = B^{-1}_{N,M}: M \boxtimes N \to N \boxtimes M,
\]
where $B_{M,N}$ denotes the original braiding of $\cC$.
In this note we choose the clockwise representative. This is the
choice compatible with the standard balancing isomorphism
\[
  \theta_M=\exp(2\pi iL(0)).
\]
%
More precisely, we choose a point $(a_1,a_2) \in X_2(\mathbb{C})$ with $a_1, a_2 \in \mathbb{R}$ such that $a_1 > a_2 > 0$. Then, $(a_1,a_2) \in U_{12}$, and we define a path
\[
\sigma: [0,1] \rightarrow X_2(\mathbb{C})
\]
as follows:
\begin{align}
\si(t) = \left(\frac{a_1+a_2}{2}+\frac{a_1-a_2}{2}e^{-\pi i t}, \frac{a_1+a_2}{2}-\frac{a_1-a_2}{2}e^{-\pi i t}\right) \in X_2(\C). \label{eq_path_si}
\end{align}
Then, $\si(0)=(a_1,a_2) \in U_{12}$ and 
$\si(1) = (a_2,a_1) \in U_{21}$. In particular, in $(12)$-coordinate 
\begin{align*}
x_{12}(\si(t))= (a_1-a_2)e^{-\pi i t}.
\end{align*}
\end{rem}

\section{An example: Virasoro VOA of central charge $c=\tfrac{1}{2}$}
Let $L(\ft,0)$ be the simple Virasoro vertex operator algebra of central charge $\ft$ \cite{FZ}. 
It is well known that $L(\ft,0)$ is rational and $C_2$-cofinite \cite{Wa,DLM}. 
Hence, by Theorem \ref{thm_B}, the representation category $\Lmod$ inherits a braided tensor category structure 
(see also \cite{HL4,HL1,HL2,HL3,H1,H2,H3}). 
Note that since $L(\ft,0)$ is $C_2$-cofinite, any finitely generated module is also $C_2$-cofinite and therefore $C_1$-cofinite.

The isomorphism classes of irreducible modules for $L(\ft,0)$ are given by $L(\ft,0)$, $L(\ft,\ft)$, and $L(\ft,\fs)$ 
(see, for example, \cite{IK}), which are distinguished by the $L(0)$-weights of their lowest weight vectors, namely,
\begin{align*}
L\left(\ft,h\right) = \bigoplus_{n \geq 0} L\left(\ft,h\right)_{h+n}, \quad\quad \left(h = 0, \ft, \fs\right),
\end{align*}
where $L(\ft,h)_\Delta$ denotes the $L(0)$-eigenspace of weight $\Delta \in \mathbb{R}$.

To determine the braided tensor category structure, it suffices to compute the conformal blocks. 
This computation was essentially carried out in \cite{BPZ} (although not all conformal blocks were explicitly written down therein).

In this section, we explicitly compute all four-point conformal blocks for $L(\ft,0)$, and thereby show that the braided tensor category $\Lmod$ is equivalent to a Tambara-Yamagami tensor category over $\mathbb{Z}_2$ as a balanced braided tensor category (for the definition of a balanced braided tensor category, see \cite{EGNO}).

A Tambara-Yamagami tensor category $\TY(A,\chi,\tau)$ is a fusion category defined from the data of a finite abelian group $A$, 
a symmetric nondegenerate bicharacter $\chi: A \times A \rightarrow \mathbb{C}^\times$, and an element $\tau \in \mathbb{C}$ satisfying $|A| \tau^2 = 1$ \cite{TY}. 

In our case, the relevant data are given by
\begin{align}
A = \mathbb{Z}_2, \quad \chi(1,1) = -1, \quad \tau = 2^{-1/2}. \label{eq_TY_data}
\end{align}
The Tambara-Yamagami tensor category $\mathrm{TY}(A, \chi, \tau)$ is defined as follows:
%
%
\begin{itemize}
\item
Objects are finite direct sums of elements of $S=A \sqcup m$.
\item
Hom-sets between elements of $S$ are given by
\begin{align*}
\mathrm{Hom}(s,s')= \begin{cases}
\C& \text{ if $s=s'$},\\
0& \text{ if $s\neq s'$}.
\end{cases}
\end{align*}
\item
Compositions are obvious ones and $\id_s = 1 \in \C$.
\item
Tensor products of elements of $S$ are given by
\begin{align*}
a\otimes b = ab,\quad a \otimes m = m,\quad 
m \otimes a = m,\quad m \otimes m =\bigoplus_{a\in A}a
\end{align*}
and the unit object is $1 \in A$.
\item
The associator $\al$ for elements of $S$ are given by
\begin{align*}
\al_{a,b,c}=\id_{abc}&:abc=(a\otimes b)\otimes c \rightarrow a\otimes (b\otimes c)\\
\al_{a,b,m}=\al_{m,a,b}=\id_{m}&:m \rightarrow m\\
\al_{a,m,b} =\chi(a,b)\id_m &: m \rightarrow m\\
\al_{a,m,m} =\al_{m,m,a}=\bigoplus_{b\in A} \id_b &: \bigoplus_{b\in A}b \rightarrow \bigoplus_{b\in A}b\\
\al_{m,a,m} =\bigoplus_{b\in A} \chi(a,b) \id_b &: \bigoplus_{b\in A}b \rightarrow \bigoplus_{b\in A}b\\
\al_{m,m,m} =(\tau \chi(a,b)^{-1}\id_m)_{a,b} &: \bigoplus_{a\in A}m \rightarrow \bigoplus_{b\in A}m
\end{align*}
with $a,b,c\in A$. The left and right unitors are identity maps.
\end{itemize}
Note that in our case
$\al_{m,m,m}: \bigoplus_{a\in \Z_2} m \rightarrow \bigoplus_{b \in \Z_2} m$ is equal to
\begin{align}
\frac{1}{\sqrt{2}}
\begin{pmatrix}
1 &1\\
1 & -1\\
\end{pmatrix}.
\label{eq_al_mmm}
\end{align}

%
%

We now consider the case given in \eqref{eq_TY_data}. Set
\begin{align*}
h_0 = 0, \quad h_1 = \ft,\quad h_m=\fs.
\end{align*}
Then, for any $a,b \in \mathbb{Z}_2$, we have
\[
\chi(a,b) = \exp\left(\pi i (h_{a+b} - h_a - h_b)\right).
\]

Under this choice of data, the category $\TY(\mathbb{Z}_2, \chi, 2^{-1/2})$ can be endowed with a balanced braided tensor category structure by defining the twist and the braiding as follows:
\begin{align*}
\theta_s = \exp(2\pi i h_s) \cdot \mathrm{id}_s : s \rightarrow s 
\end{align*}
for $s \in S = \Z_2 \sqcup m$ and
\begin{align}
\begin{split}
\sigma_{a,b} = \exp\left(\pi i (h_{a+b} - h_a - h_b)\right) \cdot \mathrm{id}_{ab} 
&: ab = a \otimes b \rightarrow b \otimes a, \\
\sigma_{a,m} = \sigma_{m,a} = \exp\left(\pi i(h_m- h_a-h_m)\right) \cdot \mathrm{id}_m 
&: m = a \otimes m \rightarrow m \otimes a, \\
\sigma_{m,m} = \left( \delta_{a,b} \cdot \exp\left( \pi i \left(h_a - h_m-h_m \right) \right) \right)_{a,b}
&: \bigoplus_{a \in \mathbb{Z}_2} a = m \otimes m \rightarrow \bigoplus_{b \in \mathbb{Z}_2} b = m \otimes m.
\end{split}
\label{eq_TY_braid}
\end{align}

We now compare the category $\TY(\mathbb{Z}_2, \chi, 2^{-1/2})$ with $\Lmod$. 
As a first step, let us recall some results about the intertwining operators among modules of $L(\ft,0)$
(This is equivalent to studying conformal blocks on $X_2(\C)$, by Proposition \ref{prop_int}).
Set 
\begin{align*}
\Is=\left\{0,\ft,\fs\right\}
\end{align*}
and $I\binom{h_0}{h_1h_2}=I\binom{L(\ft,h_0)}{L(\ft,h_1)L(\ft,h_2)}$ for $h_i \in \Is$, the space of intertwining operators among $L(\ft,0)$-modules.
For $h\in\Is$, let $\ket{h} \in L\left(\ft,h\right)_{h}$ be a non-zero vector and $\bra{h} \in L\left(\ft,h\right)_h^* \subset L\left(\ft,h\right)^\vee$ be the unique linear map satisfying $\bra{h}\ket{h}=1$.
Then, the following proposition is obtained in \cite{DMZ}:
\begin{prop}
\label{fusion_ising}
For $h_0,h_1,h_2 \in \Is$,
\begin{align*}
\dim I\binom{h_0}{h_1h_2}&=
\begin{cases}
1, & h_0 \in h_1 \star h_2,\\
0, & \text{otherwise}
\end{cases}
\end{align*}
where $\star$ is a map from $\Is \times \Is$ to the power set $P(\Is)$ which is described as follows:
\begin{table}[h]
\label{table_fusion}
\caption{Fusion rule}
  \begin{tabular}{|c|ccc|} \hline
$\star$ & $0$ & $\ft$ & $\fs$ \\ \hline
$0$ & $0$ & $\ft$ & $\fs$ \\
$\ft$ & $\ft$ & $0$ & $\fs$ \\
$\fs$ & $\fs$ & $\fs$ & $\{0, \ft\}$ \\ \hline
\end{tabular}
\end{table}

Furthermore,
if $h_0 \in h_1 \star h_2$ and $\Y(-,z) \in  I\binom{h_0}{h_1h_2} \setminus \{0\}$,
then $\bra{h_0}Y(\ket{h_1},z)\ket{h_2}\neq 0$.
\end{prop}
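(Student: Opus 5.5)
The plan is to use Frenkel--Zhu's theorem together with Wang's description of Zhu's algebra. By Proposition~\ref{prop_int}, the statement amounts to computing intertwining operators among the three irreducible modules. Since these modules are irreducible highest-weight modules, the Frenkel--Zhu theorem (in the form refined by Li for the rational case) gives
\[
I\binom{h_0}{h_1h_2}\cong \Hom_{A(V)}\bigl(A(L(\ft,h_1))\otimes_{A(V)} L(\ft,h_2)_{h_2},\,L(\ft,h_0)_{h_0}\bigr).
\]
Here $A(V)$ is Zhu's algebra and $A(M)$ is the Frenkel--Zhu bimodule.

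By Wang's result, $A(L(\ft,0))\cong \C[x]/(p(x))$ with $p(x)=x(x-\ft)(x-\fs)$, where $x$ is the image of $\omega$. The bimodule $A(L(\ft,h_1))$ is a quotient of $\C[x,y]$. In it the left and right actions of $x$ differ according to the singular-vector relations of $L(\ft,h_1)$. Concretely, the weight-2 singular vector for $h_1=\ft$ and the weight-2 singular vector for $h_1=\fs$ (both degenerate representations of $c=\ft$, of type $(2,1)$ and $(1,2)$ in Kac table notation) produce a polynomial relation $f_{h_1}(x,y)=0$. Here $y$ is the right action and $x$ the left action on the image of $\ket{h_1}$.

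Evaluating at $y=h_2$ and $x=h_0$, the Hom space is one-dimensional exactly when $f_{h_1}(h_0,h_2)=0$ and $h_0,h_2\in\Is$. The case $h_1=0$ is immediate: $I\binom{h_0}{0\,h_2}$ is nonzero only for $h_0=h_2$, spanned by the module action.

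The main computational step is to obtain $f_{h_1}$ from the null vector. For $h=\ft$ the null vector is $(L(-2)-\tfrac{3}{4}L(-1)^2)\ket{\ft}$. For $h=\fs$ it is $(L(-2)-\tfrac{4}{3}L(-1)^2)\ket{\fs}$. Applying $\bra{h_0}\,\cdot\,\ket{h_2}$ to the intertwining operator evaluated on the null vector, and using (I2), (I3) to move $L(-1)$ and $L(-2)$ to the outer states, yields the classical BPZ fusion constraint. Explicitly, for $\mathcal Y(\ket{h_1},z)$ the matrix element is $c\,z^{h_0-h_1-h_2}$, and the null vector condition becomes a quadratic equation in $h_0$. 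For $h_1=\ft$ this gives $h_0-h_2\in\{\pm\ft\}$-type solutions, which restricted to $\Is$ yields $\ft\star\ft=0$, $\ft\star\fs=\fs$, and $\ft\star0=\ft$. For $h_1=\fs$ it gives $\fs\star\fs=\{0,\ft\}$ and $\fs\star\ft=\fs$. Symmetry $I\binom{h_0}{h_1h_2}\cong I\binom{h_0}{h_2h_1}$, obtained by skew-symmetry, fills in the table.

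The hardest part is sufficiency: showing each allowed triple actually carries a nonzero intertwiner, and that the dimension is at most one. For the upper bound, the quotient bimodule acts on the one-dimensional top spaces, and the Frenkel--Zhu Hom space therefore has dimension at most one. For existence, I would invoke the Frenkel--Zhu/Li theorem in the converse direction, since rationality of $L(\ft,0)$ (Wang) guarantees that $A(M)$ is exactly cut out by the singular-vector relation. Any of these steps could alternatively be cited from Dong--Mason--Zhu, as the statement does.

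For the final claim, suppose $\bra{h_0}\mathcal Y(\ket{h_1},z)\ket{h_2}=0$. Then, by (I3) and the fact that $\ket{h_1}$ and $\ket{h_2}$ generate their modules, every matrix element $\bra{u}\mathcal Y(m_1,z)m_2$ with $u$ in the top level vanishes. Since $L(\ft,h_0)$ is irreducible, any nonzero image of $\mathcal Y$ must reach the top level. Hence $\mathcal Y=0$, which proves nonvanishing for nonzero $\mathcal Y$.
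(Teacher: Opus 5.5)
Your necessity half is correct, and it is more self-contained than the paper, which gives no argument and simply cites Dong--Mason--Zhu \cite{DMZ}. Write $\Delta=h_0-h_1-h_2$ and $\bra{h_0}\Y(\ket{h_1},z)\ket{h_2}=c\,z^{\Delta}$. The null vector $(L(-2)-\beta L(-1)^2)\ket{h_1}$ then gives $c\bigl((h_2-\Delta)-\beta\Delta(\Delta-1)\bigr)=0$, and the roots lying in $\Is$ are exactly those in the table. Your phrase ``$h_0-h_2\in\{\pm\ft\}$-type'' is inaccurate: for $(h_1,h_2)=(\ft,\fs)$ the roots are $h_0=\fs$ and $h_0=\tfrac{35}{48}$. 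The restriction to $\Is$ is nevertheless right. Your last paragraph is also correct, and it already shows that $\Y\mapsto c$ is injective. So the vanishing direction and $\dim\le 1$ follow without Frenkel--Zhu at all.

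The gap is existence. You argue that rationality ``guarantees that $A(M)$ is exactly cut out by the singular-vector relation,'' but this is not valid. The maximal submodule of the Verma module $M(\ft,h_1)$ is generated by two singular vectors: levels $2$ and $3$ for $h_1=\ft$, and levels $2$ and $4$ for $h_1=\fs$. Hence $A(L(\ft,h_1))$ is $\C[x,y]$ modulo the images of both, and the level-$2$ image yields only necessary conditions. To get a nonzero Frenkel--Zhu Hom space at an allowed point $(h_0,h_2)$, you must check that the second image also vanishes there. Rationality says nothing about this; it is exactly Wang's computation, which underlies \cite{DMZ}.

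The missing piece is small, though. Each $L(\ft,h)$ is self-dual, so the vertex operator of $V$, the module vertex operators, and their skew-symmetry and contragredient transforms realize every allowed triple containing $0$. All that remains is $I\binom{\fs}{\ft\,\fs}\neq 0$. You can get it in any of three ways:
\begin{enumerate}
\item cite Wang or \cite{DMZ} for this case;
\item compute the image of the level-$3$ singular vector for $h_1=\ft$ and check that it vanishes at $(h_0,h_2)=(\fs,\fs)$;
\item use a simple-current argument. Necessity together with the vacuum cases gives $L(\ft,\ft)\boxtimes L(\ft,\ft)\cong L(\ft,0)$. Associativity of $\boxtimes$ (Main Theorem~B) then forces $L(\ft,\ft)\boxtimes L(\ft,\fs)\neq 0$, and necessity pins this product down to $L(\ft,\fs)$.
\end{enumerate}
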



By \eqref{def_otimes}, the tensor product in $\Lmod$ is given by
\[
  M_1\boxtimes M_2
  =
  \bigoplus_{h\in IS}
  L\left(\frac12,h\right)\otimes_\C
  I\binom{L(\ft,h)}{M_1\,M_2}^{*}
\]
for \(M_1,M_2\in \Lmod\).

We next explain how the braiding is obtained from analytic continuation. Let
\[
  R_{h;M_1,M_2}:
  I\binom{L(\ft,h)}{M_1\,M_2}
  \longrightarrow
  I\binom{L(\ft,h)}{M_2\,M_1}
\]
be the linear isomorphism induced by analytic continuation along the clockwise
representative of
$\sigma:(12)\longrightarrow(21)$
fixed in Remark \ref{rem_path_explicit}. Equivalently, \(R_{h;M_1,M_2}\) is the composition
\[
\begin{aligned}
I\binom{L(\ft,h)}{M_1\,M_2}
\xrightarrow{\;s_{12}\;}
\CB_{L(\ft,h);M_1,M_2}(U_{12})
\xrightarrow{\;A(\sigma)\;}
\CB_{L(\ft,h);M_1,M_2}(U_{21})
\xrightarrow{\;e_{21}\;}
I\binom{L(\ft,h)}{M_2\,M_1}.
\end{aligned}
\]
Here \(s_{12}\) and \(e_{21}\) are the standard identifications between
intertwining operators and conformal blocks in Section \ref{sec_chiral_CB}.
By \eqref{eq_def_braid}, the categorical braiding
$B_{M_1,M_2}:M_1\boxtimes M_2\longrightarrow M_2\boxtimes M_1$
is given by
\[
  B_{M_1,M_2}
  =
  \bigoplus_{h\in IS}
  \id_{L(\ft,h)}
  \otimes
  \left((R_{h;M_1,M_2})^{-1}\right)^*.
\]
Thus, after bases of the one-dimensional intertwining-operator spaces are
chosen, the scalar appearing in the braiding is the inverse of the scalar
appearing in the analytic continuation \(R_{h;M_1,M_2}\).

Therefore, we begin by fixing a basis for each $I\binom{h_0}{h_1 h_2}$. Of course, the braided tensor category structure is independent of such choices of basis.
Let $h_0,h_1,h_2 \in \Is$ with $h_0 \in h_1\star h_2$
and $\Y(-,z) \in I\binom{h_0}{h_1h_2}$.
By (P1) and (P3), we have
\begin{align*}
z\frac{d}{dz}  \langle u_0, \Y(a_1,z)a_2  \rangle &=
\langle u_0, [L(0),\Y(a_1,z)]a_2 \rangle - \langle u_0, \Y(L(0)a_1,z)a_2  \rangle,\\
 &= (\Delta_0-\Delta_1-\Delta_2)  \langle u_0, \Y(a_1,z)a_2  \rangle
\end{align*}
for $u_0 \in L(\ft,h_0)_{\Delta_0}^*$ and $a_i \in L(\ft,h_i)_{\Delta_i}$ ($i=1,2$). Hence, we have
\begin{align}
\langle u_0, \Y(a_1,z)a_2 \rangle \in \C z^{h_0-h_1-h_2}.\label{eq_example_braid}
\end{align}
Let ${I'}_{h_1h_2}^{h_0}(-,z) \in I\binom{h_0}{h_1h_2}$ be an intertwining operator
satisfying
$$
\bra{h_0}{I'}_{h_1h_2}^{h_0}(\ket{h_1},z)\ket{h_2}=z^{h_0-h_1-h_2}.
$$
Such an intertwining operator uniquely exists by Proposition \ref{fusion_ising}.
Then,
$$
{I'}_{h_1h_2}^{h_0}(\ket{h_1},z)\ket{h_2}=z^{h_0-h_1-h_2}\ket{h_0} +\text{higher terms} \in z^{h_0-h_1-h_2} L\left(\ft,h_0\right)[[z]].
$$
By fixing these bases, the analytic continuation along $\si$ in $\Pi_1(X_2(\C))$
and $\al$ in $\Pi_1(X_3(\C))$ can be expressed not merely as linear maps but as complex numbers.
However, such numerical values depend on the specific choice of bases.
In order to extract data that matches that of $\TY(\Z_2,\chi,2^{-1/2})$, we must fix the bases as follows:
\begin{align}
{I}_{h_1h_2}^{h_0}(-,z)= 
\begin{cases}
\sqrt{2}^{-1}{I'}_{h_1h_2}^{h_0}(-,z), & (h_0,h_1,h_2) \text{ is a permutation of } (\frac{1}{2},\frac{1}{16},\frac{1}{16}), \\
{I'}_{h_1h_2}^{h_0}(-,z), & \text{otherwise}.
\end{cases}
\label{eq_normal}
\end{align}

%
%

Now, let us compute, for example, the braiding
\[
  L\left(\frac12,\frac1{16}\right)
  \boxtimes
  L\left(\frac12,\frac12\right)
  \rightarrow
  L\left(\frac12,\frac12\right)
  \boxtimes
  L\left(\frac12,\frac1{16}\right).
\]
Since
\[
  L\left(\frac12,\frac1{16}\right)
  \boxtimes
  L\left(\frac12,\frac12\right)
  =
  L\left(\frac12,\frac1{16}\right)
  \otimes_\C
  I\binom{1/16}{1/16\;\;1/2}^{*},
\]
this braiding is determined by the analytic continuation of the
one-dimensional space
\[
  I\binom{1/16}{1/16\;\;1/2} =\C I^{1/16}_{1/16,1/2}(-,z).
\]
Let $\left(I^{1/16}_{1/16,1/2}\right)^{\vee}$ be the dual basis of
\(I\binom{1/16}{1/16\;\;1/2}^{*}\). Since the leading term is proportional to
\(\sqrt{2}^{-1}z^{-1/2}\), analytic continuation along \(\sigma\) in Remark \ref{rem_path_explicit} gives
\[
  R_{1/16;\,1/16,1/2}
  \left(I^{1/16}_{1/16,1/2}\right)
  =
  e^{\pi i/2}
  I^{1/16}_{1/2,1/16}.
\]

Since the categorical braiding is the transpose of the inverse continuation,
we obtain
\[
  B_{1/16,1/2}
  \left(
    v\otimes
    \left(I^{1/16}_{1/16,1/2}\right)^{\vee}
  \right)
  =
  e^{-\pi i/2}
  v\otimes
  \left(I^{1/16}_{1/2,1/16}\right)^{\vee}.
\]
This agrees with the braiding of the Tambara--Yamagami category
\(\mathrm{TY}(\mathbb Z_2,\chi,2^{-1/2})\), since
$e^{-\pi i/2}
  =
  \exp\left(\pi i
  \left(
    \fs - \fs -\ft
  \right)\right).
$

We next examine the associator. The relevant conformal block spaces are
\[
  \mathcal H^{(12)3}_{h_0;h_1,h_2,h_3}
  =
  \bigoplus_{h\in h_1\star h_2}
  I\binom{h_0}{h\;\;h_3}
  \otimes_\C
  I\binom{h}{h_1\;\;h_2}
\]
and
\[
  \mathcal H^{1(23)}_{h_0;h_1,h_2,h_3}
  =
  \bigoplus_{h\in h_2\star h_3}
  I\binom{h_0}{h_1\;\;h}
  \otimes_\C
  I\binom{h}{h_2\;\;h_3}.
\]
Let
\[
  F_{h_0;h_1,h_2,h_3}:
  \mathcal H^{(12)3}_{h_0;h_1,h_2,h_3}
  \longrightarrow
  \mathcal H^{1(23)}_{h_0;h_1,h_2,h_3}
\]
be the linear isomorphism obtained by analytic continuation along $\alpha:(12)3\longrightarrow 1(23)$.
Under the decompositions
\[
  (M_1\boxtimes M_2)\boxtimes M_3
  \cong
  \bigoplus_{h_0\in IS}
  L\left(\frac12,h_0\right)
  \otimes_\C
  \left(
    \mathcal H^{(12)3}_{h_0;h_1,h_2,h_3}
  \right)^*
\]
and
\[
  M_1\boxtimes(M_2\boxtimes M_3)
  \cong
  \bigoplus_{h_0\in IS}
  L\left(\frac12,h_0\right)
  \otimes_\C
  \left(
    \mathcal H^{1(23)}_{h_0;h_1,h_2,h_3}
  \right)^*,
\]
the categorical associator is
\[
  \alpha_{M_1,M_2,M_3}
  =
  \bigoplus_{h_0\in IS}
  \id_{L(1/2,h_0)}
  \otimes
  \left(F_{h_0;h_1,h_2,h_3}^{-1}\right)^*.
\]
Thus, in the computations below, the connection matrices obtained by
re-expanding conformal blocks must be interpreted through this inverse
transpose.

Hereafter, following \cite{BPZ}, we explicitly compute the connection matrix.
For $h_0,h_1,h_2,h_3 \in \Is$ and $h \in h_1 \star h_2$, define
\begin{align*}
{C'}_{h_0;h_1h_2h_3}^h(x,y)
&= \bra{h_0} {I'}_{h_1 h}^{h_0}(\ket{h_1},x) {I'}_{h_2 h_3}^{h}(\ket{h_2},y)  \ket{h_3},\\
C_{h_0;h_1h_2h_3}^h(x,y)
&= \bra{h_0} I_{h_1 h}^{h_0}(\ket{h_1},x) I_{h_2 h_3}^{h}(\ket{h_2},y)  \ket{h_3}.
\end{align*}
Since
\begin{align*}
L\left(\ft,h\right) = \mathrm{Span}_\C \{L(-i_1-1) \dots L(-i_N-1) \ket{h} \mid i_1 \geq \dots \geq i_N \geq 0 \},
\end{align*}
the recursion relation \eqref{eq_ker1} reduces all matrix coefficients of a conformal
block to its values on lowest-weight vectors. Hence the conformal block is
determined by the functions
$C^h_{h_0;h_1h_2h_3}(x,y)$.

Considering the $L(0)$-action, we find that $C_{h_0;h_1h_2h_3}^h(x,y)$ satisfies the differential equation (see \cite[Section 4.6]{M1})
\begin{align*}
(x\pa_x + y \pa_y +h_1+h_2+h_3-h_0)C_{h_0;h_1,h_2,h_3}^{h}(x,y)=0,
\end{align*}
from which we immediately obtain
\begin{align*}
{C}_{h_0,h_1,h_2,h_3}^{h}(x,y)
= x^{h_0-h_1-h_2-h_3} {C}_{h_0,h_1,h_2,h_3}^{h}\left(\frac{y}{x}\right),
\end{align*}
where
\begin{align}
{C}_{h_0,h_1,h_2,h_3}^{h}(z)
= \lim_{x \to 1} {C}_{h_0,h_1,h_2,h_3}^{h}(x,z)
= \bra{h_0} {I}_{h_1, h}^{h_0}(\ket{h_1}, 1)
{I}_{h_2,h_3}^h (\ket{h_2}, z) \ket{h_3},
\label{eq_cb_limit}
\end{align}
which defines a formal power series in $z^{h - h_2 - h_3} \C[[z]]$.
Similarly to \eqref{eq_example_braid}, we find
\begin{align}
{C'}_{h_0,h_1,h_2,h_3}^{h}(x,y)
\in y^{h-h_2-h_3} x^{h_0-h_1-h} \left(1+\frac{y}{x}\C\left[\left[\frac{y}{x}\right]\right]\right).
\label{eq_CB_leading}
\end{align}

We have thus reduced the computation of conformal blocks to the calculation of ${C}_{h_0,h_1,h_2,h_3}^{h}(z)$.
Belavin-Polyakov-Zamolodchikov \cite{BPZ} further discovered that ${C}_{h_0,h_1,h_2,h_3}^{h}(z)$ can be completely determined by the recursion relations. We now explain their argument based on the definition of conformal blocks.
It is convenient to set
\begin{align*}
h_a = \frac{3}{4}a - \frac{1}{2},
\end{align*}
for $a \in \left\{ \frac{3}{4}, \frac{4}{3} \right\}$.
Then $h_{4/3} = \frac{1}{2}$ and $h_{3/4} = \frac{1}{16}$, and
\begin{align}
L(-1)\ket{0} = 0, \quad 
(L(-1)^2 - a L(-2)) \ket{h_a} = 0
\label{eq_singular_vect}
\end{align}
hold for $a \in \left\{ \frac{3}{4}, \frac{4}{3} \right\}$.

\begin{table}[h]
\caption{Differential equations}
\label{table_differential}
\begin{tabular}{|l|l|} \hline
$(h_0,h_1,h_2,h_3)$ & $D_{h_0,h_1,h_2,h_3}$ \\ \hline \hline
$(\frac{1}{16},\frac{1}{16},\frac{1}{16},\frac{1}{16})$
& $z(1 - z)\pa_z^2 + (3/4 - 3/2 z) \pa_z - 3/(64 z (1 - z))$ \\ \hline
$(\frac{1}{2},\frac{1}{2},\frac{1}{2},\frac{1}{2})$
& $z (1 - z)\pa_z^2+ (4/3 - 8/3 z)\pa_z- 2/(3 z (1 - z))$ \\ \hline
$(\frac{1}{2},\frac{1}{2},\frac{1}{16},\frac{1}{16})$
& $z (1 - z) \pa_z^2 + (3/4 - 3/2 z) \pa_z - (3 + 21z^2)/(64z (1 -z))$ \\
$(\frac{1}{2},\frac{1}{16},\frac{1}{2},\frac{1}{16})$
& $z (1 - z) \pa_z^2 + (3/4 - 3/2 z) \pa_z - 3/(8z (1 - z))$ \\
$(\frac{1}{16},\frac{1}{2},\frac{1}{2},\frac{1}{16})$
& $z (1 - z) \pa_z^2 + (3/4 - 13/4 z) \pa_z - (3 -7 z^2)/(8z (1 - z))$ \\
$(\frac{1}{2},\frac{1}{16},\frac{1}{16},\frac{1}{2})$
& $z (1 - z) \pa_z^2 + (4/3 - 11/12 z) \pa_z - (16 +21 z^2)/(192 z (1 - z))$ \\
$(\frac{1}{16},\frac{1}{2},\frac{1}{16},\frac{1}{2})$
& $z (1 - z) \pa_z^2+ (4/3 - 8/3 z) \pa_z - 1/(12z (1 - z))$ \\
$(\frac{1}{16},\frac{1}{16},\frac{1}{2},\frac{1}{2})$
& $z (1 - z)\pa_z^2 + (4/3 - 8/3 z) \pa_z - (8 -7 z^2)/(12z (1 - z))$ \\
\hline
\end{tabular}
\end{table}

Then, by the recursion relation and \eqref{eq_singular_vect}, we have:

\begin{lem}[{\cite{BPZ}}]\label{block_differential}
Let $h_0,h_1,h_2 \in \Is$ and $a \in \left\{ \frac{4}{3}, \frac{3}{4} \right\}$ and $h \in h_2 \star h_a$.
Set
\begin{align*}
D_{h_0,h_1,h_2,h_a} = 
(1-z)z \pa_z^2 + ((2\tilde{h} - 2 + a)z + a)\pa_z + \frac{1}{(1-z)z}
((\tilde{h}(a+\tilde{h}-1) - a h_1)z^2 - a h_2),
\end{align*}
where $\tilde{h} = h_0 - h_1 - h_2 - h_a$.
Then, ${C}_{h_0,h_1,h_2,h_a}^{h}(z)$ satisfies
\begin{align*}
D_{h_0,h_1,h_2,h_a}{C}_{h_0,h_1,h_2,h_a}^{h}(z)=0,
\end{align*}
where the differential operator
$D_{h_0,h_1,h_2,h_a}$ is listed in Table \ref{table_differential}.
\end{lem}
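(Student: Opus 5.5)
Insert the null vector \eqref{eq_singular_vect} into the fourth slot and use the recursion relation \eqref{eq_ker1} (equivalently Lemma \ref{lem_module_recursive}) to rewrite the resulting modes as differential operators in the insertion points. Work with the three-variable block
\[
C(z_1,z_2,z_3)=C_A\bigl(\bra{h_0},\ket{h_1},\ket{h_2},\ket{h_a},z_1,z_2,z_3\bigr),\qquad A=1(23),
\]
with $\ket{h_a}$ inserted at $z_3$. Since $C$ is a conformal block, it annihilates the image of $1\otimes\bigl(L(-1)^2-aL(-2)\bigr)_3\,\bra{h_0}\otimes\ket{h_1}\otimes\ket{h_2}\otimes\ket{h_a}$, which is zero in $\Mr$.

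First, $L(-1)_3$ acts as $\pa_3$ by \eqref{eq_D_def}, so $L(-1)_3^2$ gives $\pa_3^2 C$. Next, expand $L(-2)_3=\omega(-1)_3$ using \eqref{eq_ker1} with $n=-1$, $a=\omega$, $i=3$. For $s=1,2$ the terms are $\sum_{k\ge0}\binom{-1}{k}(z_s-z_3)^{-1-k}L(k-1)_s$, and only $k=0,1,2$ survive on a lowest-weight vector, giving $(z_s-z_3)^{-1}\pa_s$ (up to sign conventions) and $(z_s-z_3)^{-2}h_s$. The $0$-th component term is $\sum_k\binom{-1}{k}(-z_3)^k L(k-2)^*\bra{h_0}$. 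Since $\bra{h_0}$ is annihilated by $L(n)^*$ for $n<0$, this leaves only the $L(0)^*$ term. I will handle this most cleanly by exploiting translation invariance, i.e.\ first moving $z_3$ to $0$ in the $(23)$-coordinates, or equivalently by using the $L(-1)$, $L(0)$, $L(1)$ Ward identities to eliminate the point at infinity. This yields the BPZ equation
\[
\Bigl[\pa_3^2 - a\sum_{s=1,2}\Bigl(\frac{h_s}{(z_s-z_3)^2}-\frac{1}{z_s-z_3}\pa_s\Bigr)\Bigr]C=0 .
\]

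Second, reduce to one variable. Translation invariance, $\sum\pa_i C=0$ (from the $L(-1)_0^*$ relation with $\bra{h_0}L(-1)=0$), together with the scaling identity from $L(0)$, namely the Euler equation already stated, lets me write $C(z_1,z_2,z_3)=(z_1-z_3)^{\tilde h}\,C^h(z)$ with $z=(z_2-z_3)/(z_1-z_3)$. Here $\tilde h=h_0-h_1-h_2-h_a$, matching the normalization \eqref{eq_cb_limit} with $x=z_{13}$ and $y=z_{23}$. I then substitute: $\pa_1$, $\pa_2$, $\pa_3$ become first-order operators in $z$ plus multiples of $\tilde h$. After specializing $z_1=1$, $z_3=0$, $z_2=z$ and multiplying by $z(1-z)$ (the normalization fixing the leading coefficient), this produces an operator of the form $D_{h_0,h_1,h_2,h_a}$.

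The main obstacle is bookkeeping. I need the correct signs in \eqref{eq_ker1} for the $0$-th slot, the correct contribution of $h_0$ through the $L(0)$ Ward identity (it enters only via $\tilde h$), and a check that the coefficients collapse to exactly $(2\tilde h-2+a)z+a$ and $\bigl((\tilde h(a+\tilde h-1)-ah_1)z^2-ah_2\bigr)/(z(1-z))$. To verify this, I will test the final operator against the leading behaviour \eqref{eq_CB_leading}: the indicial roots at $z=0$ must be $h-h_2-h_a$ for $h\in h_2\star h_a$. I will also compare the table entries for each $(h_0,h_1,h_2,a)$ with $a=4/3$ for $h_a=\ft$ and $a=3/4$ for $h_a=\fs$; this last comparison is pure substitution.
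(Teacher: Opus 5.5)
Your proposal is correct and follows the paper's proof. You insert $(L(-1)^2-aL(-2))\ket{h_a}$ at the third point and use the recursion to turn it into $(\pa_x+\pa_y)^2+a(x^{-1}\pa_x+y^{-1}\pa_y-h_1x^{-2}-h_2y^{-2})$ acting on $C(x,y)$ with $x=z_{13}$, $y=z_{23}$ (your displayed BPZ equation at $z_3=0$ is exactly this), and then reduce via $C(x,y)=x^{\tilde h}C(y/x)$ and $x\to 1$. There are two harmless slips: in the $0$-th slot the modes are $L(-2-k)^*$ rather than $L(k-2)^*$, so every term is killed by $\bra{h_0}$ and no $L(0)^*$ term survives ($h_0$ enters only through $\tilde h$); and the final normalizing factor is $z/(1-z)$ rather than $z(1-z)$, since $\pa_3^2$ contributes $(1-z)^2\pa_z^2$.
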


\begin{proof}
By the recursion relation,
\begin{align*}
0 &= \bra{h_0} {I'}_{h_1, h}^{h_0}(\ket{h_1}, x){I'}_{h_2,h_a}^h (\ket{h_2}, y) \bigl(L(-1)^2-a L(-2)\bigr) \ket{h_a} \\
&= \left(\pa_x+\pa_y\right)^2 + a\left( x^{-1}\pa_x + y^{-1}\pa_y - \frac{h_1}{x^2} - \frac{h_2}{y^2} \right) 
{C}_{h_0,h_1,h_2,h_a}^{h}(x,y),
\end{align*}
from which the assertion follows by taking the limit $x \to 1$ using \eqref{eq_cb_limit}.
\end{proof}

It is easy to verify that these are second-order regular differential equations with possible singularities at $\{0,1,\infty\}$.  
Therefore, they reduce to Gauss's hypergeometric differential equations, and all their solutions can be expressed in terms of hypergeometric functions (see for example \cite{WW}). Then, we have:
\begin{thm}
\label{list_block2}
The Virasoro conformal blocks $C_{h_0,h_1,h_2,h_3}^{h}(x,y)$ are given in Table \ref{table_vacuum} and Table \ref{table_block2} (see below for the choice of branches).
\end{thm}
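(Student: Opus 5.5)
The proof is a direct solution of the ODEs of Lemma~\ref{block_differential}, followed by fixing normalizations through the leading asymptotics \eqref{eq_CB_leading}.

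\textbf{Step 1: reduction to the one-variable functions.} For each admissible quintuple $(h_0,h_1,h_2,h_3;h)$ with $h\in h_2\star h_3$ and $h_0\in h_1\star h$, the recursion relation \eqref{eq_ker1} reduces the block to the lowest-weight function. Homogeneity gives
\[
C^h_{h_0,h_1,h_2,h_3}(x,y)=x^{h_0-h_1-h_2-h_3}C^h_{h_0,h_1,h_2,h_3}(y/x).
\]
So it suffices to determine $C^h(z)\in z^{h-h_2-h_3}\C[[z]]$ with the prescribed leading coefficient. By \eqref{eq_CB_leading} and the normalization \eqref{eq_normal}, this coefficient is $1$, $\frac{1}{\sqrt2}$ or $\frac12$, according to how many of the two intertwining operators involve a permutation of $(\frac12,\frac1{16},\frac1{16})$.

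Whenever $h_3\in\{\frac12,\frac1{16}\}$, Lemma~\ref{block_differential} applies. The remaining cases have $h_3=0$, or by symmetry some $h_i=0$. These reduce to two-point functions: either $I^{h}_{h_2,0}$ is given by the $L(-1)$-exponential, or the vacuum insertion is trivial. Those entries of Table~\ref{table_vacuum} then follow directly from \eqref{eq_example_braid}.

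\textbf{Step 2: solving the ODEs.} Each operator in Table~\ref{table_differential} is Fuchsian with singular points $0,1,\infty$. I would compute the local exponents at $0$, which are $\{h-h_2-h_3\}$ over the two allowed $h$, or one admissible exponent plus a spurious one. I would then substitute $C=z^{\lambda}(1-z)^{\mu}F$ with $\lambda,\mu$ local exponents at $0,1$, which turns the equation into Gauss's hypergeometric equation for $F$ with explicit $(a,b,c)$. For the Ising data the parameters are half-integral or quarter-integral, so the hypergeometric functions are elementary. Examples are ${}_2F_1(-\frac14,\frac14;\frac12;z)$ type functions, which give $\bigl((1+\sqrt{1-z})^{1/2}\pm(1-\sqrt{1-z})^{1/2}\bigr)$ via the standard formula ${}_2F_1(a,a+\frac12;\frac12;z)=\frac12\bigl((1+\sqrt z)^{-2a}+(1-\sqrt z)^{-2a}\bigr)$; others reduce to rational functions times powers. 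Selecting the solution whose expansion at $z=0$ lies in $z^{h-h_2-h_3}(c+z\C[[z]])$ picks out each block uniquely. Uniqueness holds because, when two exponents are allowed, they differ by a non-integer ($\frac12$ for the $\frac1{16}$-quadruple). When only one exponent is allowed, the other local solution has a different leading exponent, or I check that it is excluded by \eqref{eq_CB_leading}.

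\textbf{Step 3: rehomogenizing and branches.} I would homogenize back by $z=y/x$, and translate to $(z_1,z_2,z_3)$ via $x=z_{13}$, $y=z_{23}$ together with the $\exp(L(-1)z_3)$ shift, to get the forms displayed in Table~\ref{table_block_intro}. Finally I would specify branches consistent with the principal $\Log$ of \eqref{eq_log_def} on $U_{1(23)}$, namely $|z_{23}|<|z_{13}|$. On this domain $\sqrt{1-z}$ is taken with positive real part, so the expansion at $z=0$ matches.

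\textbf{Main obstacle.} I expect the main difficulty to be Step 2 for the three-dimensional cases such as $(\frac1{16})^4$ and $(\frac12,\frac12,\frac1{16},\frac1{16})$. There one has to find the correct gauge factor $z^\lambda(1-z)^\mu$ that produces recognizable elementary ${}_2F_1$'s, and keep track of branches so that the resulting closed forms have the right leading coefficients. I would first verify each candidate in the tables by direct substitution into the listed operators, which is a finite symbolic check. That check, combined with the uniqueness argument from the local exponents at $0$, suffices without deriving the closed forms from scratch.
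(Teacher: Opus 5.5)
Your proposal is correct and follows the paper's own argument. Both reduce to the one-variable function by homogeneity, use the second-order Fuchsian (hypergeometric-type) equation of Lemma~\ref{block_differential}, and pin each block down by its leading term $z^{h-h_2-h_3}$, using that the exponents at $0$ are non-degenerate; your explicit substitution check and separate treatment of the vacuum entries just fill in what the paper leaves implicit. The one point to state precisely is that uniqueness requires the two exponents at $0$ to differ by a non-integer, not merely to be different. This does hold in every case: the exponent pairs are $\{-\frac18,\frac38\}$, $\{-\frac12,\frac34\}$, $\{-1,\frac23\}$, $\{-\frac12,\frac16\}$, and so on.
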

\begin{proof}
The solution space of the Gauss hypergeometric differential equation on a simply connected domain is two-dimensional. 
In this case, since the exponents of the differential equation are non-degenerate, the solution is uniquely determined by the leading order 
$C_{h_0,h_1,h_2,h_3}^{h}(z) \sim z^{h - h_2 - h_3}$ near $z = 0$ (see \eqref{eq_CB_leading}).
\end{proof}


\begin{table}[h]
\caption{Virasoro conformal blocks with vacuum}
\label{table_vacuum}
  \begin{tabular}{|c|c|} \hline
 & ${C'}_{h_0,h_1,h_2,h_3}^{h}(x,y)$ \\ \hline
$h_0=0$ & $x^{h_2-h_1-h_3}y^{h_1-h_2-h_3}(x-y)^{h_3-h_1-h_2}$\\
$h_1=0$ & $y^{h_0-h_2-h_3}$\\
$h_2=0$ & $x^{h_0-h_1-h_3}$\\
$h_3=0$ & $(x-y)^{h_0-h_1-h_2}$\\ \hline
\end{tabular}
\end{table}
%
%
\begin{table}[h]
\caption{Virasoro conformal blocks}
\label{table_block2}
  \begin{tabular}{|l|c||c|} \hline
$(h_0,h_1,h_2,h_3)$ & $h$ & $C_{h_0,h_1,h_2,h_3}^{h}(x,y)$ \\ \hline \hline
$(\frac{1}{16},\frac{1}{16},\frac{1}{16},\frac{1}{16}) $
& $0$ & $\frac{1}{2}\{xy(x-y)\}^{-\frac{1}{8}}
\Bigl((x^{\frac{1}{2}}+y^{\frac{1}{2}})^{\frac{1}{2}} +
(x^{\frac{1}{2}}-y^{\frac{1}{2}})^{\frac{1}{2}} \Bigr)$
 \\
    & $\frac{1}{2}$ & $\frac{1}{2} \{xy(x-y)\}^{-\frac{1}{8}}
\Bigl((x^{\frac{1}{2}}+y^{\frac{1}{2}})^{\frac{1}{2}} - 
(x^{\frac{1}{2}}-y^{\frac{1}{2}})^{\frac{1}{2}} \Bigr)$
 \\ \hline
$(\frac{1}{2},\frac{1}{2},\frac{1}{2},\frac{1}{2}) $
& $0$ & $\{xy(x-y)\}^{-1}(x^2-xy+y^2)$ \\ \hline

$(\frac{1}{2},\frac{1}{2},\frac{1}{16},\frac{1}{16}) $
& $0$ & $y^{-\frac{1}{8}}\{x(x-y)\}^{-\frac{1}{2}}
(x-\frac{y}{2})$ \\
$(\frac{1}{2},\frac{1}{16},\frac{1}{2},\frac{1}{16}) $
& $\frac{1}{16}$ &
$\frac{1}{2}x^{-\frac{1}{8}}\{y(x-y)\}^{-\frac{1}{2}}
(x-2y)$ \\
$(\frac{1}{16},\frac{1}{2},\frac{1}{2},\frac{1}{16}) $
& $\frac{1}{16}$ & $\frac{1}{2}(xy)^{-\frac{1}{2}} (x-y)^{-1}(x+y)$\\
$(\frac{1}{2},\frac{1}{16},\frac{1}{16},\frac{1}{2}) $
& $\frac{1}{16}$ & $\frac{1}{2}\{xy\}^{-\frac{1}{2}}(x-y)^{-\frac{1}{8}}(x+y)$ \\
$(\frac{1}{16},\frac{1}{2},\frac{1}{16},\frac{1}{2}) $
& $\frac{1}{16}$ & $\frac{1}{2}x^{-1}\{y(x-y)\}^{-\frac{1}{2}}(x-2y)$\\
$(\frac{1}{16},\frac{1}{16},\frac{1}{2},\frac{1}{2}) $
& $0$ & $y^{-1}\{x(x-y)\}^{-\frac{1}{2}}(x-\frac{y}{2})$ \\ \hline
\end{tabular}
\end{table}

In both Table~\ref{table_vacuum} and Table~\ref{table_block2}, the entries define multivalued functions.
Hence it is necessary to explain how these correspond to the single-valued analytic functions 
\[
C_{h_0,h_1,h_2,h_3}^h(z_{13}, z_{23})
\] 
that are defined on \(U_{1(23)}\).

We illustrate this correspondence in detail for the case where 
\(h_0,h_1,h_2,h_3 = \fs\), and verify that the resulting associator coincides with that of the Tambara-Yamagami tensor category. 
Recall that 
\[
U_{1(23)} = \{(z_1,z_2,z_3)\in X_3(\C) \mid |z_{13}| > |z_{23}|,\; z_{13}/z_{23}\notin \R_-,\; z_{13}\notin \R_-\}.
\]

Formally, one can write
\begin{align}
\begin{split}
&\frac12\{x y(x-y)\}^{-1/8}\Bigl((\sqrt{x}+\sqrt{y})^{1/2} + (\sqrt{x}-\sqrt{y})^{1/2}\Bigr)\\
&= \frac12 x^{-1/8}\{(y/x)(1-y/x)\}^{-1/8}\Bigl((1+(y/x)^{1/2})^{1/2} + (1-(y/x)^{1/2})^{1/2}\Bigr),
\end{split}
\label{eq_xxxx}
\end{align}
and since
\((1 \pm (y/x)^{1/2})^{1/2} = \sum_{k \ge 0} \binom{1/2}{k} (\pm y/x)^{k/2}\), 
the expression in \eqref{eq_xxxx} admits a convergent series expansion of the form:
\[
y^{-1/8}\left(1 + \frac{y}{x}\C\left[\left[\frac{y}{x}\right]\right]\right)
\]
for \(|x| > |y|\).  Substituting \((x,y) = (z_{13}, z_{23})\) yields a single-valued analytic function on \(U_{1(23)}\), which is denoted by \(\bigl|_{|x|>|y|}\).
Then Theorem \ref{list_block2} asserts the following identity as a formal series:
\begin{align}
\frac12\{x y(x-y)\}^{-1/8}
\Bigl((\sqrt{x}+\sqrt{y})^{1/2} + (\sqrt{x}-\sqrt{y})^{1/2}\Bigr) \Bigl|_{|x|>|y|}
= \bra{\fs} I_{\fs 0}^{\fs}\left(\ket{\fs},x\right)\,I_{\fs\fs}^{0}\left(\ket{\fs},y\right)\,\ket{\fs}.
\label{eq_fsfs_1}
\end{align}

Next, we determine the associator.  
Let $x > y > 0$ be real numbers, and consider a path from the region $x > y$ to the region $y > x - y$, while keeping the condition $x,y\in\R$ and $x> y>0$ and $x - y > 0$ throughout. Therefore, the path in $X_3(\C)$ defined by the map
$(x,y) \mapsto (x,y,0) \in X_3(\mathbb{C})$
remains inside the intersection $U_{1(23)} \cap U_{(12)3}$, and in particular, does not cross any branch cuts.
In this region, the functions listed in Table~\ref{table_vacuum} and Table~\ref{table_block2} admit branches such that all $n$-th roots take positive real values.
This path represents the associator generator \(\alpha\) in $\CPaB(3)$. For example, in $|x|>|y|>0$, one has
\[
C_{\ft\fs\ft\fs}^\fs(x,y)
= \frac12 x^{-1/8} \{y(x-y)\}^{-1/2}(x-2y)\Bigl|_{|x|>|y|} \sim \ft\,x^{\ft-\fs-\fs} y^{\fs-\ft-\fs},
\]
which matches the behavior of \eqref{eq_CB_leading}.
On the other hand, expanding around \((x-y)/y \to 0\)
\begin{align*}
\frac12 x^{-1/8} \{y(x-y)\}^{-1/2}(x-2y)\Bigl|_{|y|>|x-y|}
&= \frac12(y+(x-y))^{-1/8}\{y(x-y)\}^{-1/2}(-y+(x-y))\Bigl|_{|y|>|x-y|}\\
&\sim -\frac12\,y^{\ft-\fs-\fs}(x-y)^{\fs-\ft-\fs}.
\end{align*}
holds. 
In this region, the latter expansion defines a section of \(\CB_{\ft\fs\ft\fs}(U_{(12)3})\).
Comparing the leading terms, we conclude:
\begin{align}
\frac12 x^{-1/8} \{y(x-y)\}^{-1/2}(x-2y) \Bigl|_{|y|>|x-y|}
= -\bra{\ft} I\left(I\left(\ket{\fs},x-y\right)\ket{\ft},y\right)\ket{\fs},
\label{eq_ass_non_trivial2}
\end{align}
where the minus sign coincides precisely with the nontrivial associator \(\alpha_{m,\ft,m} = \id_0 - \id_{\ft}\) in the Tambara-Yamagami tensor category.
Another nontrivial case arises for \(C_{\fs\fs\fs\fs}^h(x,y)\) with \(h\in\{0,\ft\}\).  Similarly one shows
\begin{align*}
\frac12\{x y(x-y)\}^{-1/8}\Bigl(\left(\sqrt{x}+\sqrt{y}\right)^{1/2} \pm \left(\sqrt{x}-\sqrt{y}\right)^{1/2}\Bigr)\Bigl|_{|x|>|y|}
= \bra{\fs} I_{\fs h_\pm}^{\fs}\left(\ket{\fs},x\right)\,I_{\fs\fs}^{h_\pm}\left(\ket{\fs},y\right)\,\ket{\fs},
\end{align*}
with \(h_+=0\) and \(h_-=\ft\).
Since
\begin{align*}
\ft\Bigl((\sqrt{x}+\sqrt{y})^{1/2} \pm (\sqrt{x}-\sqrt{y})^{1/2}\Bigr)
&= \ft\,y^{1/4}\Bigl(\left(2+\ft (x-y)/y - \cdots\right)^{1/2} \pm \left(\ft (x-y)/y - \cdots\right)^{1/2}\Bigr)\\
&\sim \frac{1}{\sqrt2}y^{1/4} \pm \frac{1}{2\sqrt2}y^{1/4-\ft}(x-y)^\ft,
\end{align*}
we deduce
\begin{align*}
\frac12\{x y(x-y)\}^{-1/8}\Bigl((\sqrt{x}+\sqrt{y})^{1/2} \pm (\sqrt{x}-\sqrt{y})^{1/2}\Bigr)\Bigl|_{|y|>|x-y|}
\sim \frac{1}{\sqrt2}(x-y)^{-\fs-\fs} \pm \frac{1}{2\sqrt2}y^{-\ft}(x-y)^{\ft-\fs-\fs}.
\end{align*}
Hence the connection matrix of
\[
  A(\alpha):
  \CB_{\fs\fs\fs\fs}(U_{(12)3})
  \longrightarrow
  \CB_{\fs\fs\fs\fs}(U_{1(23)})
\]
is $\frac{1}{\sqrt{2}}
  \begin{pmatrix}
    1 & 1 \\
    1 & -1
  \end{pmatrix}$.
The categorical associator is obtained from the inverse transpose of this connection matrix. Since this matrix is symmetric and involutive, the resulting associator is the same matrix, and hence agrees with \(\alpha_{m,m,m}\) in \eqref{eq_al_mmm}.
Finally, the behavior of Virasoro conformal blocks as \(x-y\to0\) is
summarized in the following table.
\begin{table}[h]
\caption{Asymptotic behavior of Virasoro conformal blocks}
\label{table_block3}
  \begin{tabular}{|l|c||c|c|} \hline
$(h_0,h_1,h_2,h_3)$ & $h$ & $C_{h_0,h_1,h_2,h_3}^{h}(x,y)$ & $x-y \to 0$ \\ \hline \hline
$(\frac{1}{2},\frac{1}{2},\frac{1}{2},\frac{1}{2}) $
& $0$ & $\{xy(x-y)\}^{-1}(x^2-xy+y^2)$ & $(x-y)^{-1}$ \\ \hline

$(\frac{1}{2},\frac{1}{2},\frac{1}{16},\frac{1}{16}) $
& $0$ & $y^{-\frac{1}{8}}\{x(x-y)\}^{-\frac{1}{2}}
(x-\frac{y}{2})$ & $\ft (x-y)^{\fs-\ft-\fs} y^{\ft-\fs-\fs}$
\\
$(\frac{1}{2},\frac{1}{16},\frac{1}{2},\frac{1}{16}) $
& $\frac{1}{16}$ &
$\frac{1}{2}x^{-\frac{1}{8}}\{y(x-y)\}^{-\frac{1}{2}}
(x-2y)$ & $- \frac{1}{2} y^{\ft-\fs-\fs}(x-y)^{\fs-\ft-\fs}$\\
$(\frac{1}{16},\frac{1}{2},\frac{1}{2},\frac{1}{16}) $
& $\frac{1}{16}$ & $\frac{1}{2}(xy)^{-\frac{1}{2}} (x-y)^{-1}(x+y)$ & $(x-y)^{-\ft-\ft}$\\
$(\frac{1}{2},\frac{1}{16},\frac{1}{16},\frac{1}{2}) $
& $\frac{1}{16}$ & $\frac{1}{2}\{xy\}^{-\frac{1}{2}}(x-y)^{-\frac{1}{8}}(x+y)$ & $(x-y)^{-\fs-\fs}$\\
$(\frac{1}{16},\frac{1}{2},\frac{1}{16},\frac{1}{2}) $
& $\frac{1}{16}$ & $\frac{1}{2}x^{-1}\{y(x-y)\}^{-\frac{1}{2}}(x-2y)$ & $-\ft (x-y)^{\fs-\ft-\fs} y^{\fs-\ft-\fs}$
\\
$(\frac{1}{16},\frac{1}{16},\frac{1}{2},\frac{1}{2}) $
& $0$ & $y^{-1}\{x(x-y)\}^{-\frac{1}{2}}(x-\frac{y}{2})$
& $\ft (x-y)^{\fs-\fs-\ft}y^{\fs-\fs-\ft}$ \\ \hline
\end{tabular}
\end{table}
Based on Table \ref{table_block3}, Table \ref{table_vacuum}, and the preceding discussion, we obtain the following result:
\begin{thm}
The module category of $L(\tfrac{1}{2},0)$ is equivalent, as a balanced braided tensor category, to 
$\TY(\Z_2,\chi,2^{-1/2})$.
\end{thm}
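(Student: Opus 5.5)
We will construct an explicit functor $\Phi:\TY(\Z_2,\chi,2^{-1/2})\to\Lmod$ on simple objects by $0\mapsto L(\ft,0)$, $1\mapsto L(\ft,\ft)$, $m\mapsto L(\ft,\fs)$, extended additively. Since both categories are semisimple with three simples and all endomorphism algebras of simples are $\C$, $\Phi$ is an equivalence of $\C$-linear categories. By Proposition \ref{fusion_ising}, the fusion rules of $\Lmod$ coincide with the TY fusion rules. Concretely, for simples $s,t$ the decomposition \eqref{def_otimes} gives $\Phi(s)\boxtimes\Phi(t)=\bigoplus_{h\in h_s\star h_t}L(\ft,h)\otimes I\binom{h}{h_s h_t}^*$, and each multiplicity space is one-dimensional with the distinguished basis vector $(I^{h}_{h_sh_t})^\vee$ fixed by \eqref{eq_normal}. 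We use these basis vectors to define the tensor structure $J_{s,t}:\Phi(s)\boxtimes\Phi(t)\cong\Phi(s\otimes t)$. It then remains to check three things on simples: the associator square, the braiding square, and the twist. Unitors are identities because $I^{h}_{0h}$ and $I^{h}_{h0}$ are normalized to $z^0$, so the vacuum rows of Table \ref{table_vacuum} give trivial gluing.

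\textbf{Twist and braiding.} The twist is immediate: $\theta=\exp(2\pi iL(0))$ acts on $L(\ft,h)$ by $e^{2\pi ih}$, which matches $\theta_s$. For the braiding, by \eqref{eq_def_braid} it suffices to compute $R_{h;h_1,h_2}$ on the normalized bases. By \eqref{eq_example_braid} the block is $c\,z_{12}^{h-h_1-h_2}$ with $c$ the normalization constant of \eqref{eq_normal}. This constant is symmetric in $(h_1,h_2)$, so it cancels. Along the clockwise half-turn $x_{12}\mapsto x_{12}e^{-\pi i}$ of Remark \ref{rem_path_explicit} we obtain $R=e^{-\pi i(h-h_1-h_2)}$. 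Taking the inverse transpose gives $e^{\pi i(h-h_1-h_2)}$, which is exactly \eqref{eq_TY_braid} in all cases; the worked example $(\fs,\ft)$ is one instance. We also need to check that $e_{21}$ identifies the continued block with $I^h_{h_2h_1}$ without an extra sign. This holds because the lowest coefficient $\bra{h}\cdot\ket{}$ determines the intertwiner uniquely, by Proposition \ref{fusion_ising}.

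\textbf{Associator.} For each quadruple $(h_0;h_1,h_2,h_3)$ of simples we need the connection matrix $F$ of $\rho(\alpha)$ in the bases $\{C^h\}$ on $U_{1(23)}$ and the analogous glued blocks on $U_{(12)3}$, and then we compare $(F^{-1})^*$ with $\al^{TY}$. When some $h_i=0$, Table \ref{table_vacuum} gives pure monomials. On the real path $x>y>x-y>0$ these re-expand with coefficient $1$, giving trivial associators, in agreement with $\al_{a,b,c},\al_{a,b,m},\ldots$ whenever a unit is involved. The remaining one-dimensional cases are the rows of Table \ref{table_block2} with two $\fs$'s and two $\ft$'s, together with $(\ft;\ft,\ft,\ft)$. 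For each we read off the leading term as $x-y\to0$ from Table \ref{table_block3} and compare it with the normalized $(12)3$-block, whose leading coefficient is the product of the two normalizations. For example, $\ft\cdot$ is the product of two $\sqrt2^{-1}$ factors, so coefficient $\pm\ft$ yields $F=\pm1$. The signs come out as $-1$ exactly for $(\ft;\fs,\ft,\fs)$ and $(\fs;\ft,\fs,\ft)$, matching $\al_{m,1,m}$ and $\al_{1,m,1}=\chi(1,1)$. For $(\ft;\ft,\ft,\ft)$, the coefficient of $(x-y)^{-1}$ is $+1$, giving $\al_{1,1,1}=\id$. The two-dimensional case $(\fs;\fs,\fs,\fs)$ was computed above: its connection matrix is the symmetric involution $\frac1{\sqrt2}\bigl(\begin{smallmatrix}1&1\\1&-1\end{smallmatrix}\bigr)$, matching \eqref{eq_al_mmm}.

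\textbf{Main obstacle.} The delicate step is bookkeeping rather than conceptual. First, we must verify that the basis of $\CB(U_{(12)3})$ produced by gluing (Theorem \ref{thm_glue}) with the normalized intertwiners has leading coefficient exactly the product of the two normalizations. This follows from \eqref{eq_comp}, since only the lowest-weight term contributes at leading order. Second, the real path chosen must represent $\alpha$ with all roots positive, so that no phases enter. The normalization \eqref{eq_normal} with $\sqrt2^{-1}$ is designed precisely so that the resulting $F$-symbols are the gauge-fixed TY ones. If some sign turned out inconsistent, we would first try absorbing it by rescaling $J_{s,t}$ by a 2-cocycle-type gauge, or else by passing to the TY classification up to equivalence, since TY data with $\chi(1,1)=-1$ and $\tau=2^{-1/2}$ is determined up to equivalence by $(\chi,\tau)$.
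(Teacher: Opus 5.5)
Your proposal is correct and follows essentially the paper's argument: with the normalized bases \eqref{eq_normal}, the braiding is the inverse of the clockwise half-turn phase $e^{-\pi i(h-h_1-h_2)}$, the twist is $e^{2\pi i h}$, and the associators are read off from the leading terms as $x-y\to 0$ in Tables~\ref{table_vacuum} and~\ref{table_block3}, together with the $2\times 2$ matrix for $(\fs;\fs,\fs,\fs)$; your case-by-case checks against the TY data are correct. The only differences are that you make the monoidal structure $J_{s,t}$ explicit, and that your fallback appeal to the TY classification would fix only the fusion-category structure, not the braiding, though it is never needed because every sign comes out consistently.
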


%

\section*{Acknowledgements}
This note was prepared as a contribution to the proceedings of the 2nd AMS-UMI International Joint Meeting. I would like to express my sincere gratitude to the organizers for their warm hospitality, and to the referees for their valuable comments.

\bibliographystyle{amsplain}
\bibliography{references}

\end{document}